%% file: main.tex
\documentclass{article}
\usepackage{graphicx} 

\usepackage{amsmath,amssymb,amsthm,fullpage}
\input{macros}

\begin{document}

\title{Moonflowers and code sparsification}

\author{Shachar Lovett\thanks{Supported by Simons Investigator Award \#929894 and NSF award CCF-2425349} \\
UC San Diego \and Raghu Meka\thanks{Supported by NSF EnCORE: Institute for Emerging CORE Methods in Data Science Award \#2217033 and NSF AF: Small Award \#2425350} \\ UCLA \and Yimeng Wang\thanks{Supported by NSF EnCORE: Institute for Emerging CORE Methods in Data Science Award \#2217033 and NSF AF: Small Award \#2425350} \\ UCLA}
\date{\today}

\maketitle

\begin{abstract}
We introduce \emph{moonflowers}, a weaker analogue of sunflowers. A family of sets $S_1,\ldots,S_k$ is a $k$-moonflower if each set $S_i$ contains at least one element that is absent from all the others. 
We study the extremal problem of determining the largest possible size of a family of sets of size at most $w$ that avoids a $k$-moonflower, and obtain optimal bounds.

Using the extremal moonflower bound and Gilmer's entropy lemma, we obtain universe reduction theorem for moonflower-free families. As an application, we revisit the code sparsification problem studied by Brakensiek and Guruswami (STOC 2025). Using our extremal bounds for moonflowers, we get near-optimal bounds for code sparsification while also simplifying the argument. Concretely, we improve the dependence on the block length from poly-logarithmic to logarithmic, which is necessary.
\end{abstract}

\section{Introduction}
\input{introduction}

\section{Preliminaries}\label{sec:prelim}

\input{preliminaries}

\section{Extremal moonflower bound}\label{sec:moonflower}
\input{moonflower_bound}

\section{Universe reduction}\label{sec:universe_reduction}
\input{universe_reduction}

\section{Improved sparsification with a single $\log n$ factor}
\label{sec:sparsification_single_log_n}
\input{newsparsification}

\newpage

\bibliographystyle{alpha}
\bibliography{refs}

\appendix

\section{Proof of Frankl's theorem}\label{sec:appendix}
\input{appendix}

\end{document}

%% file: macros.tex
\usepackage[T1]{fontenc}
\usepackage[margin=1in]{geometry}

\usepackage{latexsym}
\usepackage[small,bf]{caption2}
\usepackage{graphics}
\usepackage{epsfig}
\usepackage{amsopn}
\usepackage{url}
\usepackage{xcolor}
\usepackage{hyperref}
\usepackage{algorithm}
\usepackage{algpseudocode}
\usepackage{graphics}
\usepackage{comment}

\usepackage[shortlabels]{enumitem}

\usepackage{multirow}

\usepackage{titling}

\usepackage{amsmath}
\usepackage{amssymb}
\usepackage{amsthm}
\usepackage{hyperref}   
\usepackage[nameinlink,noabbrev]{cleveref}
\usepackage{aliascnt}

\usepackage{mathtools}
\usepackage{enumitem}
\usepackage{bbm}
\usepackage{turnstile}

\newtheorem{theorem}{Theorem}[section]

\newaliascnt{lemma}{theorem}
\newtheorem{lemma}[lemma]{Lemma}
\aliascntresetthe{lemma}

\newaliascnt{proposition}{theorem}
\newtheorem{proposition}[proposition]{Proposition}
\aliascntresetthe{proposition}

\newaliascnt{fact}{theorem}
\newtheorem{fact}[fact]{Fact}
\aliascntresetthe{fact}

\newaliascnt{claim}{theorem}

\aliascntresetthe{claim}

\newaliascnt{definition}{theorem}
\newtheorem{definition}[definition]{Definition}
\aliascntresetthe{definition}

\newaliascnt{example}{theorem}

\aliascntresetthe{example}

\newaliascnt{corollary}{theorem}
\newtheorem{corollary}[corollary]{Corollary}
\aliascntresetthe{corollary}

\newaliascnt{assumption}{theorem}

\aliascntresetthe{assumption}

\newaliascnt{observation}{theorem}

\aliascntresetthe{observation}

\newaliascnt{conjecture}{theorem}
\newtheorem{conjecture}[conjecture]{Conjecture}
\aliascntresetthe{conjecture}

\newaliascnt{question}{theorem}
\newtheorem{question}[question]{Question}
\aliascntresetthe{question}

\newaliascnt{problem}{theorem}

\aliascntresetthe{problem}

\crefname{theorem}{theorem}{theorems}
\Crefname{theorem}{Theorem}{Theorems}
\crefname{lemma}{lemma}{lemmas}
\Crefname{lemma}{Lemma}{Lemmas}
\crefname{proposition}{proposition}{propositions}
\Crefname{proposition}{Proposition}{Propositions}
\crefname{fact}{fact}{facts}
\Crefname{fact}{Fact}{Facts}
\crefname{claim}{claim}{claims}
\Crefname{claim}{Claim}{Claims}
\crefname{definition}{definition}{definition}
\Crefname{definition}{Definition}{definitions}
\crefname{example}{example}{examples}
\Crefname{example}{Example}{Examples}
\crefname{corollary}{corollary}{corollaries}
\Crefname{corollary}{Corollary}{Corollaries}
\crefname{assumption}{assumption}{assumptions}
\Crefname{assumption}{Assumption}{Assumptions}
\crefname{observation}{observation}{observations}
\Crefname{observation}{Observation}{Observations}
\crefname{conjecture}{conjecture}{conjectures}
\Crefname{conjecture}{Conjecture}{Conjectures}
\crefname{question}{question}{questions}
\Crefname{question}{Question}{Questions}
\crefname{problem}{problem}{problems}
\Crefname{problem}{Problem}{Problems}

\DeclareMathOperator{\NRD}{NRD}
\DeclareMathOperator{\Ham}{Ham}
\DeclareMathOperator{\SPR}{SPR}

\DeclareMathOperator{\MF}{MF}

\DeclareMathOperator{\CL}{CL}

\renewcommand{\epsilon}{\varepsilon}

\DeclareMathOperator{\high}{\mathrm{high}}
\DeclareMathOperator{\low}{\mathrm{low}}

\DeclareMathOperator{\Sym}{Sym}

\newcommand{\rr}{\mathbb{R}}

\newcommand{\poly}{\mathsf{poly}}
\newcommand{\Supp}{\mathsf{Supp}}

\renewcommand{\Pr}{\mathop{\bf Pr\/}}

\newcommand{\calA}{\mathcal{A}}
\newcommand{\calB}{\mathcal{B}}
\newcommand{\calC}{\mathcal{C}}
\newcommand{\calD}{\mathcal{D}}

\newcommand{\calF}{\mathcal{F}}
\newcommand{\calG}{\mathcal{G}}
\newcommand{\calH}{\mathcal{H}}

\newcommand{\calP}{\mathcal{P}}

\newcommand{\calS}{\mathcal{S}}

\newcommand{\calU}{\mathcal{U}}

\newcommand{\E}{\mathbb{E}}

\newcommand{\supp}{\mathrm{supp}}

\newcommand{\shachar}[1]{\textcolor{blue}{(Shachar: {#1})}}

\newcommand{\yimeng}[1]{\textcolor{orange}{(Yimeng: {#1})}}
\newcommand{\ignore}[1]{{}}

%% file: introduction.tex
Extremal combinatorics is a branch of combinatorics that studies how large a finite combinatorial object needs to be in order to guarantee the existence of certain patterns of interest. One such pattern that has attracted the attention of researchers in the past few decades is \emph{sunflowers}. 
A collection of distinct sets $S_1,\ldots,S_k$ is called a \emph{$k$-sunflower} if their pairwise intersections are all the same; in other words, $S_i \cap S_j = S_1 \cap \cdots \cap S_k$ for all $i \neq j$. 

Setting terminology, a set $S$ is called a \emph{$w$-set} if $|S| \leq w$. Back in 1960, Erd\H{o}s and Rado \cite{ER60} proved that any large family of $w$-sets must contain a $k$-sunflower. Specifically, they showed that if $\calF$ is a family of $w$-sets of size $|\calF| \geq w! \cdot (k-1)^w$, then $\calF$ must contain a $k$-sunflower. In the same work, Erd\H{o}s and Rado \cite{ER60} conjectured the bound can be significantly improved.

\begin{conjecture}[Sunflower conjecture \cite{ER60}] Let $k \geq 3$. There exists $c = c(k)$ such that any family of $w$-sets $\calF$ of size $|\calF| \geq c^w$ must contain a $k$-sunflower.
\end{conjecture}

Despite the simplicity of the statement, improving upon the bound in \cite{ER60} turned out to be rather challenging. For almost sixty years after the sunflower conjecture was raised, even for the case of $k = 3$, the best known bound was still of the form $|\calF| \geq w^{O(w)}$ \cite{Kos97, Fuk18}. This long period void of significant progress finally ended with the work \cite{ALWZ21} which  improved the upper bound to $|\calF| \geq (\log w)^w (k \log \log w)^{O(w)}$. Subsequent works \cite{Rao20, BCW21} built upon the result in \cite{ALWZ21} and obtained the following improved bound.

\begin{lemma}[Improved bounds for the sunflower lemma \cite{BCW21}] 
There exists a constant $C > 0$ such that the following holds. Any family of $w$-sets $\calF$ of size $|\calF| \geq (Ck \log w)^w$ must contain a $k$-sunflower.
\end{lemma}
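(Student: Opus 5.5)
The plan is the spread-set approach of \cite{ALWZ21, Rao20, BCW21}: reduce to a robust sunflower-type statement about \emph{spread} families, then run the Erd\H{o}s--Rado induction with spread in place of the pigeonhole step.

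\textbf{Step 1 (spread or large link).} Call $\calF$ \emph{$\kappa$-spread} if for every nonempty $T$,
\[
|\{S \in \calF : T \subseteq S\}| \leq \kappa^{-|T|}|\calF|.
\]
Set $\kappa = Ck\log w$. If $|\calF| \geq \kappa^w$ and $\calF$ is not $\kappa$-spread, pick a maximal violating $T$. The link $\calF_T = \{S\setminus T : T \subseteq S \in \calF\}$ is then a family of $(w-|T|)$-sets of size more than $\kappa^{w-|T|}$, and by maximality it is $\kappa$-spread. A $k$-sunflower in $\calF_T$ lifts to one in $\calF$ by adding $T$ back to each petal. So it suffices to show that every $\kappa$-spread family of $w$-sets contains $k$ pairwise disjoint sets, since disjoint sets form a sunflower with empty core.

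\textbf{Step 2 (disjoint sets via random partition).} Color the universe uniformly at random with $k$ colors, giving classes $W_1,\ldots,W_k$, each of density $p = 1/k$. It suffices to show that for each $i$, with probability greater than $1-1/k$, some $S \in \calF$ satisfies $S \subseteq W_i$. A union bound over the $k$ classes then yields disjoint sets $S_1 \subseteq W_1,\ldots,S_k \subseteq W_k$.

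\textbf{Step 3 (key spread lemma).} The core claim is that for a random $W$ of density $p$, a $\kappa$-spread family with $\kappa \gtrsim p^{-1}\log(w/\epsilon)$ has some member inside $W$ except with probability $\epsilon$. With $\epsilon = 1/k$ this gives $\kappa = O(k\log(wk))$. The $\log k$ factor is removed either by the more careful counting in \cite{BCW21}, or by choosing $\epsilon$ adaptively, since $\log(wk)$ is absorbed into $O(k\log w)$ up to constants when $k \leq w$; when $k > w$, the Erd\H{o}s--Rado bound already suffices.

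To prove the spread lemma, I would use Rao's encoding argument. Reveal $W$ in $r \approx \log w$ rounds, each of density $p/r$. Within a round, for each $S$, either a large fraction of $S$ lands in $W$, or the minimal-$W$-fragment of $S$ shrinks by a constant factor. The latter is shown by bounding the number of "bad" pairs $(W, S)$ through an encoding that records $W \cup S'$ together with a few bits of $S'$; spreadness controls the number of candidate $S'$ containing a given fragment. After $\log w$ rounds the fragment is empty, meaning some set lies inside $W$.

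The main obstacle is this encoding/counting in Step 3. Getting failure probability $\epsilon$ with only a $\log(w/\epsilon)$ loss, and not a worse polynomial loss, requires care in choosing the round densities. I would follow Rao's counting of pairs $(W,S)$ with $|W|$ fixed, since this is exactly the calculation that gives the $(Ck\log w)^w$ bound.
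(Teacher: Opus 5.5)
The paper gives no proof of this lemma. It is quoted from \cite{BCW21} as background, so your outline has to stand on its own.

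Steps 1 and 3 are the standard Alweiss--Lovett--Wu--Zhang/Rao route and are fine as a sketch. However, together with Step 2 they only give Rao's threshold $(Ck\log(wk))^w$, and the step meant to remove the $\log k$ fails. Your case split is wrong: for $k>w$ the Erd\H{o}s--Rado threshold satisfies $w!(k-1)^w\ge (w(k-1)/e)^w$. This exceeds $(Ck\log w)^w$ as soon as $w>2eC\log w$, whatever $k$ is.

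Rao's bound does handle $k\le\poly(w)$. What remains is exactly the regime where $\log k/\log w$ is unbounded (e.g.\ $k=2^w$), which is the only regime in which the statement improves on Rao's bound.

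A more careful count in Step 3 cannot rescue this, because the requirement in Step 2 is itself false there. Take disjoint sets $X_1,\dots,X_w$ of size $\kappa=Ck\log w$ and let $\calF=\{\{x_1,\dots,x_w\}: x_j\in X_j\}$. Then $|\calF|=\kappa^w$, and $\calF$ is $\kappa$-spread: for nonempty $T$, the number of members containing $T$ is either $\kappa^{-|T|}|\calF|$ or $0$. Yet a class of density $1/k$ contains no member whenever it misses $X_1$. This happens with probability $(1-1/k)^{\kappa}\ge w^{-3C}$, which exceeds $1/k$ once $k>w^{3C}$. So insisting that each of $k$ classes succeed with probability $>1-1/k$ inherently forces $\kappa\gtrsim k\log(wk)$, and no sharpening of the spread lemma changes that.

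The missing idea is to stop requiring every class to succeed. Color with $2k$ colors, so each class has density $1/(2k)$, and invoke the spread lemma with a constant failure probability, say $\epsilon=1/4$. This needs only $\kappa\gtrsim 2k\log(4w)=O(k\log w)$.

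The expected number of failing classes is then at most $k/2$. By Markov's inequality, the probability that more than $k$ classes fail is at most $\frac{k/2}{k+1}<\frac12$. Hence some coloring has at least $k$ classes that each contain a member of the spread family. Members lying in different classes are pairwise disjoint, which is exactly what Step 1 needs.

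With Step 2 replaced in this way, and Rao's lemma used at constant $\epsilon$ rather than $\epsilon=1/k$, your outline does yield the threshold $(Ck\log w)^w$.
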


One of the key innovations of \cite{ALWZ21} was the use of a robust probabilistic generalization of sunflowers called \emph{robust sunflowers}. They showed that if a set family $\calF$ is a robust sunflower of the appropriate parameters, then $\calF$ must contain a large sunflower. Using this, they reduced the problem of finding sunflowers to the problem of finding robust sunflowers. Through a careful counting argument, they were able to establish the robust sunflower lemma which leads to the final improved bound for the sunflower lemma. 

It is worth noting that the robust sunflower lemma parameters are known to be tight. Hence, any improvement to the sunflower lemma must come from a different approach. Apart from its application in the improved sunflower lemma, robust sunflowers found various applications in combinatorics and theoretical computer science. In combinatorics, it led to the resolution of the famous Kahn-Kalai conjecture \cite{KK07} by Park and Pham \cite{PP24}. In theoretical computer science, it was used in proving monotone circuit lower bounds \cite{Ros14, CKR22} and lifting theorems in communication complexity \cite{LMM+22}.

In short, robust sunflowers are a generalization of sunflowers for which one can prove tight bounds, and which has diverse applications in combinatorics and theoretical computer science. This motivates the following broad question:
\[
\begin{array}{c} 
\textit{Are there other natural generalizations / variants of sunflowers for which we are able to prove} \\
\textit{ tight bounds and have applications in combinatorics and theoretical computer science?}
\end{array}
\]

\subsection{Moonflowers}
In this paper, we study a combinatorial object which we call \emph{moonflowers}. A family of sets $S_1,\ldots,S_k$ is called a \emph{$k$-moonflower} if each $S_i$ contains at least one element absent from all other sets. Equivalently, there is a set $I$ such that the sets $S_1 \setminus I,\ldots,S_k \setminus I$ are nonempty and pairwise disjoint. Note that unlike for sunflowers, we do not require anything on the intersections $S_i \cap I$.

It is easy to see that a sunflower is either a moonflower or becomes one after removing one set\footnote{More precisely, let $S_1,\ldots,S_k$ be a sunflower and set $K= \cap S_i$. If $K$ is not one of the sets $S_i$ then $S_1,\ldots,S_k$ is a moonflower. If it is, then after removing it the remaining $k-1$ sets form a moonflower.}.
The converse, however, is not true. For example, take any set system $\calF$ and add a new element to each set; this forms a moonflower but in general not a sunflower. 

Moreover, for a fixed set family $\calF$, the gap between the size of the largest sunflower it contains and the size of the largest moonflower it contains can be exponentially large. 
As an example, consider the set family $\calF:= \{S \subseteq [2n]: |S \cap \{2i-1, 2i\}| = 1, \  \forall i\in [n]\}$. Then $|\calF| = 2^n$ and $\calF$ does not contain a $3$-sunflower\footnote{To see this, suppose $S_1, S_2, S_3 \in \calF$ are distinct sets that form a $3$-sunflower. For any $i \in [n]$, at least two of the sets must intersect the same element in $\{2i-1,2i\}$ and hence all three must contain it. This implies that $S_1=S_2=S_3$ and so the sets are not distinct.}. Next, enlarge the universe to size $2^n + 2n$ and add one new distinct element to each set in $\calF$. Denote the resulting family by $\calF'$. Then $\calF'$ still does not contain a $3$-sunflower. However, $\calF'$ is a $2^n$-moonflower.

Just as in the case of sunflowers, we are interested in the following extremal question on moonflowers: 

\begin{question}\label{q:wsfext}
    How large does a family of $w$-sets have to be to guarantee the existence of a $k$-moonflower?
\end{question}
We prove the following theorem.

\begin{theorem}[Extremal bounds on moonflowers]\label{main_wsf_theorem}  Let $\calF$ be a family of $w$-sets which is $k$-moonflower-free. Then 
\[
|\calF| \leq \binom{k+w-1}{w}.
\]
\end{theorem}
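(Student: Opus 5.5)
The plan is to induct on $k+w$ and aim for the Pascal-type recursion
\[
f(k,w)\le f(k-1,w)+f(k,w-1),
\]
where $f(k,w)$ is the largest size of a $k$-moonflower-free family of $w$-sets. Since $\binom{k+w-1}{w}=\binom{k+w-2}{w}+\binom{k+w-2}{w-1}$, this recursion gives the stated bound.

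The base cases are immediate. For $k=1$, a $1$-moonflower is just a nonempty set, so a $1$-moonflower-free family is contained in $\{\emptyset\}$. This gives $f(1,w)=1=\binom{w}{w}$. For $w=0$, the only $0$-set is $\emptyset$, so $f(k,0)=1=\binom{k-1}{0}$. Throughout, I note that $\emptyset$ has no private element and so never participates in a moonflower.

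For the inductive step, fix an element $x$ and split $\calF=\calF_x\cup\calF_{\bar x}$ according to whether sets contain $x$. Put $\calF_x'=\{S\setminus\{x\}:S\in\calF_x\}$. This is a family of $(w-1)$-sets with $|\calF_x'|=|\calF_x|$, and it is still $k$-moonflower-free. Indeed, if $S_1\setminus\{x\},\ldots,S_k\setminus\{x\}$ have private elements $y_i\neq x$, then the $y_i$ remain private for $S_1,\ldots,S_k$, because $x$ lies in every $S_i$ and so $y_i\ne x$ is not affected. Hence $|\calF_x|\le f(k,w-1)$.

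It remains to show $\calF_{\bar x}$ is $(k-1)$-moonflower-free for a good choice of $x$. Suppose $T_1,\ldots,T_{k-1}\in\calF_{\bar x}$ form a moonflower with private elements $y_1,\ldots,y_{k-1}$. Any $S\in\calF_x$ has the private element $x$ relative to the $T_i$, so $T_1,\ldots,T_{k-1},S$ would be a forbidden $k$-moonflower. The only exception is when $S$ contains some $y_i$, which destroys the privacy of $y_i$.

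This is the main obstacle: I must choose $x$ so that some set in $\calF_x$ avoids all the private elements of any $(k-1)$-moonflower in $\calF_{\bar x}$.

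The first thing I would try is to strengthen the induction hypothesis. Order the ground set linearly and choose $x$ to be the largest element. When some $y_i\in S$, I would swap roles: replace $T_i$ by $S$ and use $x$ versus $y_i$ to rebuild a moonflower of the same size, arguing via a minimal counterexample, for instance the $S$ with fewest elements or the lexicographically extreme one.

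If this direct swapping argument resists, I would fall back on a skew Bollobás-type inequality, namely Frankl's theorem in the appendix. Its form is: if $A_1,\ldots,A_m$ are $a$-sets and $B_1,\ldots,B_m$ are $b$-sets with $A_i\cap B_i=\emptyset$ and $A_i\cap B_j\neq\emptyset$ for $i<j$, then $m\le\binom{a+b}{a}$.

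To apply it with $a=w$ and $b=k-1$, I would order $\calF=\{S_1,\ldots,S_m\}$ greedily, for example by decreasing a suitable order on sets. For each $S_j$ I would construct a set $B_j$ of at most $k-1$ elements disjoint from $S_j$ that meets every earlier $S_i$, $i<j$. I would build $B_j$ by repeatedly extracting maximal moonflowers containing $S_j$: moonflower-freeness means every maximal moonflower through $S_j$ among $\{S_1,\ldots,S_j\}$ has at most $k-1$ members. Their private elements, taken outside $S_j$, form a hitting set of size at most $k-1$ for the earlier sets that are not blocked. The delicate point, which I would need to verify, is that the ordering can be chosen so that $B_j$ really hits all earlier $S_i$ rather than only those in one moonflower. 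Once that is verified, Frankl's theorem gives $|\calF|\le\binom{w+k-1}{w}$ directly.
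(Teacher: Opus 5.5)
Your proposal has a genuine gap, and it sits exactly at the step you flag in each of the two routes.

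\textbf{The Pascal recursion.} The claim you need is that some element $x$ makes $\calF_{\bar x}$ $(k-1)$-moonflower-free. This is false in general, so no ordering or swapping argument can supply it. Take $k=3$, $w=2$ and $\calF=\{\{1,2\},\{2,4\},\{3,4\},\{1,3\}\}$, the edges of a $4$-cycle. Any three of these sets form a path $\{a,b\},\{b,c\},\{c,d\}$ whose middle set has no private element, so $\calF$ is $3$-moonflower-free. But for every $x$, the two sets avoiding $x$ form a $2$-moonflower; for instance $\{2,4\},\{3,4\}$ when $x=1$. Elements outside $\{1,2,3,4\}$ are worse. So the split $\calF=\calF_x\cup\calF_{\bar x}$ with the hypotheses you want does not exist here, and this route would need a genuinely different decomposition.

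\textbf{The Frankl route.} Your fallback is the right framework and is essentially the paper's proof. The paper uses the mirror image: it orders by non-decreasing size and lets each auxiliary set hit all \emph{later} sets. Ordering by non-increasing size gives $S_i\not\subseteq S_j$ for $i<j$, so a set disjoint from $S_j$ meeting all earlier $S_i$ exists.

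What is missing is how to get such a set of size at most $k-1$, and extracting moonflowers does not do it. The private elements of a single maximal moonflower through $S_j$ need not meet every earlier set. For example, take $S_j=\{1,2\}$ with earlier sets $\{3,4\},\{2,3\}$. The moonflower $\{\{1,2\},\{3,4\}\}$ is maximal, yet the private element $4$ misses $\{2,3\}$. On the other hand, collecting private elements from several moonflowers loses the bound $k-1$.

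The fix is to reverse the logic. Let $B_j$ be a set disjoint from $S_j$ that is inclusion-minimal among such sets meeting every $S_i$ with $i<j$. Hitting is then automatic. Minimality gives, for each $s\in B_j$, an earlier set $T_s$ with $T_s\cap B_j=\{s\}$. Thus $s$ is a private element of $T_s$ among $\{T_s\}_{s\in B_j}$, so these sets form a $|B_j|$-moonflower and $|B_j|\le k-1$. Frankl's theorem then yields $|\calF|\le\binom{w+k-1}{w}$.
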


In particular, when $w = \Theta (k)$, the above bound simplifies to $|\calF| \leq \exp (O(w))$. We complement this upper bound with a matching lower bound via an explicit construction. First, note that the family of all $w$-sets in $[k+w-2]$ is $k$-moonflower-free and has size ${k+w-2 \choose w}$ which is near optimal. A modification of this construction, found by ChatGPT, gives a tight bound.

\begin{lemma}[Moonflower lower bound]\label{wsf_lower_bound}
    There exists a family of $w$-sets $\calF$ such that $|\calF| = \binom{k+w-1}{w}$ and $\calF$ is $k$-moonflower-free.
\end{lemma}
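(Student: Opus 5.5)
Since $\binom{k+w-1}{w}=\binom{k+w-1}{k-1}$, this is exactly the number of vectors $m=(m_1,\ldots,m_{k-1})\in\mathbb{Z}_{\ge 0}^{k-1}$ with $m_1+\cdots+m_{k-1}\le w$ (stars and bars with a slack variable). So I will build one set for each such vector, using a universe of $k-1$ disjoint chains. Concretely, take the universe $[k-1]\times[w]$ and set
\[
S_m=\bigcup_{i=1}^{k-1}\{(i,1),(i,2),\ldots,(i,m_i)\},\qquad \calF=\{S_m:\ m\in\mathbb{Z}_{\ge0}^{k-1},\ \textstyle\sum_i m_i\le w\}.
\]
Each $S_m$ has size $\sum_i m_i\le w$, so it is a $w$-set. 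Distinct vectors give distinct sets, because $m_i=|S_m\cap(\{i\}\times[w])|$. Hence $|\calF|=\binom{k+w-1}{w}$.

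The key step is to characterize private elements. Suppose $S_{m^{(1)}},\ldots,S_{m^{(k)}}$ are distinct members of $\calF$ forming a $k$-moonflower, and let $(i,t)$ be an element of $S_{m^{(j)}}$ that lies in no other set. Membership gives $t\le m^{(j)}_i$, and absence from the others gives $t>m^{(l)}_i$ for every $l\ne j$. Therefore $m^{(j)}_i>m^{(l)}_i$ for all $l\neq j$, so $S_{m^{(j)}}$ is the strict unique maximizer in coordinate $i$.

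Now apply pigeonhole. A coordinate $i$ can have at most one strict unique maximizer among the $k$ sets, so assigning each set a coordinate in which it has a private element gives an injective map from the $k$ sets into $[k-1]$. This is impossible, so $\calF$ is $k$-moonflower-free. The empty set (the vector $m=0$) is no problem: it has no private element, so it simply cannot belong to a moonflower.

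The only real obstacle is choosing the right construction. The naive family of all $w$-subsets of $[k+w-1]$ does contain a $k$-moonflower: take $k$ distinct private elements and pad each one with the same $w-1$ common elements. Similarly, encoding multisets of size exactly $w$ over $[k]$ by chains fails (already for $k=2$, $w=1$, it yields two disjoint singletons). The fix is to use only $k-1$ chains and let the size constraint be $\le w$, which yields exactly the count $\binom{k+w-1}{w}$. After that, verifying the construction is the short pigeonhole argument above.
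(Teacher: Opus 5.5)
Your proof is correct, and it takes a genuinely different route from the paper.

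\textbf{The paper's construction.} The paper layers its family by size: $\calF=\bigcup_{s=0}^{w}\{S\subseteq[k+s-2]:|S|=s\}$. It counts the family with the hockey-stick identity $\sum_{s=0}^{w}\binom{k+s-2}{s}=\binom{k+w-1}{w}$. For freeness, it picks private elements $x_1,\dots,x_k$ with $x_1$ the largest. Then all of them lie in $[k+|S_1|-2]$, so $S_1$ together with $x_2,\dots,x_k$ gives $|S_1|+k-1$ distinct elements inside a set of size $|S_1|+k-2$.

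\textbf{Your construction.} Your family is the size-truncated collection of unions of initial segments of $k-1$ disjoint chains, counted by stars and bars. Freeness comes from your observation that a private element certifies a strict unique coordinatewise maximum, followed by pigeonhole over the $k-1$ chains.

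\textbf{Comparison.} The two families are genuinely different, not relabelings of each other. For $k=3$, $w=2$, the three $2$-sets of the paper's family pairwise intersect, whereas two of yours, $\{(1,1),(1,2)\}$ and $\{(2,1),(2,2)\}$, are disjoint.

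The paper's example lives on a compact universe of $k+w-2$ points. It is a natural refinement of the near-extremal example of all $w$-subsets of $[k+w-2]$.

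Yours uses $(k-1)w$ points, so it also attains the support bound of Lemma~\ref{lem:moonflower_free_family_have_small_support}. Its freeness argument makes the role of $k-1$ transparent: $k-1$ is the number of chains, and each chain can host private elements of at most one member of a moonflower. This is in the spirit of the disjoint-chains construction in the sparsification lower bound.
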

Consequently, the upper bound in Theorem \ref{main_wsf_theorem} is tight and $\MF(k,w)={k+w-1 \choose w}$. Similar to robust sunflowers, the notion of moonflowers has applications in both combinatorics and theoretical computer science. In combinatorics, it is identical to  \emph{induced matchings} in a bipartite graph. Specifically, given a graph $G = (V, E)$, a set of edges $M \subseteq E$ is called an \emph{induced matching} of $G$ if (i) $E$ is a matching and (ii) there is no edge in $G$ connecting the endpoints of two distinct edges in $M$.

Given a set family $\calF$, we can treat it as a bipartite graph $G_{\calF} = (L_{\calF}, R_{\calF}, E)$ where vertices in $L_{\calF}$ corresponds to sets in $\calF$ and vertices in $R_{\calF}$ corresponds to $\supp (\calF)$. Then, it is not difficult to see that a $k$-moonflower in $\calF$ corresponds exactly to an induced matching of size $k$ in $G_{\calF}$. Because of this equivalence, \Cref{main_wsf_theorem} immediately implies the following graph-theoretic result.

\begin{corollary}[Extremal bounds on induced matchings]
     Let $G = (L, R, E)$ be a left degree at most $w$ bipartite graph with no isolated vertices such that no two vertices in $L$ have the same neighborhood. If $G$ has no induced matchings of size $k$, then
    \[
   |L| \leq \binom{k+w-1}{w}.
    \]
\end{corollary}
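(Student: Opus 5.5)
The plan is to deduce the corollary directly from \Cref{main_wsf_theorem}, by passing from the graph to the family of left neighborhoods and proving the contrapositive. Given $G=(L,R,E)$, I will define $\calF := \{N(v) : v \in L\}$, where $N(v)\subseteq R$ denotes the neighborhood of $v$. Three properties of $\calF$ are needed.
\begin{enumerate}[(i)]
\item Since every $v\in L$ has degree at most $w$, every set in $\calF$ is a $w$-set.
\item Since no two vertices of $L$ have the same neighborhood, the map $v \mapsto N(v)$ is injective, so $|\calF| = |L|$.
\item Since $G$ has no isolated vertices, every $N(v)$ is nonempty. This is not strictly needed for the argument, but it makes the correspondence with moonflowers clean.
\end{enumerate}

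It therefore suffices to show that if $\calF$ contains a $k$-moonflower, then $G$ has an induced matching of size $k$. Suppose $N(v_1),\ldots,N(v_k)$ is a $k$-moonflower with $v_1,\ldots,v_k\in L$ distinct. For each $i$, choose $u_i \in N(v_i)$ with $u_i \notin N(v_j)$ for all $j\neq i$; the moonflower property guarantees such a $u_i$ exists. I then check that $M=\{v_iu_i : i\in[k]\}$ is an induced matching.

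First, $M$ consists of edges of $G$, since $u_i \in N(v_i)$. Second, $M$ is a matching. The $v_i$ are distinct by choice. The $u_i$ are also distinct: if $u_i = u_j$ with $i \neq j$, then $u_i \in N(v_j)$, contradicting the choice of $u_i$.

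Third, $M$ is induced. Because $G$ is bipartite, the only possible edges between endpoints of two distinct edges $v_iu_i$ and $v_ju_j$ are $v_iu_j$ or $v_ju_i$. An edge $v_iu_j$ with $i\ne j$ would mean $u_j\in N(v_i)$, again contradicting the choice of $u_j$, and the case $v_ju_i$ is symmetric.

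So if $G$ has no induced matching of size $k$, then $\calF$ is $k$-moonflower-free. Applying \Cref{main_wsf_theorem} gives $|L| = |\calF| \le \binom{k+w-1}{w}$. There is no real obstacle here. The only points needing care are the injectivity of $v\mapsto N(v)$, which is exactly where the distinct-neighborhood hypothesis enters, and the observation that bipartiteness rules out all edges among the matched vertices other than cross edges.
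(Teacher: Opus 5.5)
Your proof is correct and takes the same route as the paper. The paper identifies $G$ with the family of left neighborhoods, asserts that $k$-moonflowers in that family correspond to induced matchings of size $k$, and applies \Cref{main_wsf_theorem}; you have written out the one direction of that correspondence that is needed (moonflower $\Rightarrow$ induced matching), which the paper leaves as ``not difficult to see.''
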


\subsection{Universe Reduction}

Let $\calF \subseteq 2^{[n]}$ be a set family over universe $[n]$. In general, the universe can be quite large. For the purpose of efficient computations and storage memory, we would like to identify a \emph{small} set of important elements $I \subseteq [n]$ such that we do not lose too many sets in $\calF$ after reducing the universe to $I$, i.e., the size $\calF_I:= \{S \in \calF: S \subseteq I\}$ is not too small compared to that of $\calF$. In fact, as we will see in the code sparsification application, what we really need is that after puncturing a few coordinates in $I$, the residual set system is small. That is, $\calF_{\Bar{I}}=\{S \setminus I: S \in \calF\}$ should be small.

In general, this task is not possible. For instance, consider the set $\calF = 2^{[n]}$, i.e., the set of all subsets of $[n]$. Then every element in $[n]$ is contained in exactly half of the sets in $\calF$. Leaving out even one element would lose $|\calF| / 2$ of the sets, and we need to puncture most elements to reduce the size of $\calF$ significantly, say to $2^{n/2}$.

Therefore, it would be ideal if we can identify structures over set families that make such universe reduction possible. It turns out moonflower-freeness is one such desirable structure. We first show the existence of a popular element in large moonflower-free set families.

\begin{theorem}[Large moonflower-free set family implies existence of popular element]\label{thm:large_family_size_implies_existence_of_popular_element}
   Let $\calF \subseteq 2^{[n]}$ be a family of $w$-sets which is $k$-moonflower-free. If $|\calF| > {\binom{k+tw}{tw}}^{\Omega((\log t)/t)}$ for some integer $t \geq 2$, then there exists an element $i \in [n]$ such that $i$ is contained in more than $1/2t$ fraction of the sets in $\calF$.
\end{theorem}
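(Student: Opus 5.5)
The plan is to argue by contradiction: if every element has small degree, then $t$-fold unions of sets in $\calF$ form a family that is too large, and this contradicts \Cref{main_wsf_theorem}. Suppose every $i\in[n]$ lies in at most a $1/2t$ fraction of the sets of $\calF$. Let $A_1,\dots,A_t$ be independent uniform samples from $\calF$, and let $U=A_1\cup\dots\cup A_t$. Consider
\[
\calF^{\cup t}:=\{S_1\cup\dots\cup S_t : S_j\in\calF\}.
\]
This is a family of $tw$-sets. I will bound its size from above combinatorially and from below by entropy.

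\textbf{Upper bound (combinatorial).} I will show that $\calF^{\cup t}$ is still $k$-moonflower-free. Suppose $U_1,\dots,U_k\in\calF^{\cup t}$ form a $k$-moonflower. Let $x_j\in U_j$ be an element absent from all $U_{j'}$ with $j'\neq j$. Since $U_j$ is a union of members of $\calF$, there is some $S_j\in\calF$ with $x_j\in S_j\subseteq U_j$. Then $x_j\notin S_{j'}$ for all $j'\ne j$, because $S_{j'}\subseteq U_{j'}$. So the $S_j$ are distinct and form a $k$-moonflower in $\calF$, a contradiction. Applying \Cref{main_wsf_theorem} with parameter $tw$ gives
\[
|\calF^{\cup t}|\le\binom{k+tw-1}{tw}\le\binom{k+tw}{tw}.
\]
Since $U$ is supported on $\calF^{\cup t}$, this yields $H(U)\le\log\binom{k+tw}{tw}$.

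\textbf{Lower bound (entropy).} Here I need a $t$-fold version of Gilmer's entropy lemma: if $\Pr[i\in A]\le 1/2t$ for every $i$, then
\[
H(A_1\cup\dots\cup A_t)\ \ge\ c\,\frac{t}{\log t}\,H(A).
\]
Combined with $H(A)=\log|\calF|$ and the upper bound, this gives $|\calF|\le\binom{k+tw}{tw}^{O((\log t)/t)}$, contradicting the hypothesis for an appropriate constant in the $\Omega$. So under the hypothesis some element has degree more than $1/2t$.

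\textbf{Proving the entropy lemma (main obstacle).} I will follow Gilmer's template.
\begin{itemize}
\item Write $A$ as the indicator vector $(a_1,\dots,a_n)$. Use the chain rule $H(U)\ge\sum_i H(u_i\mid u_{<i},\ \text{and the prefixes } a^{(j)}_{<i}\text{ for all } j)$.
\item Conditioned on the prefixes, the bits $a^{(1)}_i,\dots,a^{(t)}_i$ are independent with conditional means $p^{(1)},\dots,p^{(t)}$, each an independent copy of the random variable $p=\Pr[a_i=1\mid a_{<i}]$. This gives $H(U)\ge\sum_i\E\, h\bigl(1-\prod_j(1-p^{(j)})\bigr)$, while $H(A)=\sum_i\E\,h(p)$.
\item It therefore suffices to prove the pointwise-in-distribution inequality
\[
\E\, h\Bigl(1-\prod_{j}(1-p^{(j)})\Bigr)\ \ge\ c\,\frac{t}{\log t}\,\E\, h(p)
\]
for i.i.d.\ $p^{(j)}\in[0,1]$ with $\E p\le 1/2t$.
\end{itemize}

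The scalar heuristic explains the factor $t/\log t$. For $q\le 1/2t$ we have $1-(1-q)^t\approx tq$, and
\[
\frac{h(tq)}{h(q)}\approx t\,\frac{\log(1/tq)}{\log(1/q)},
\]
which is worst at $q=1/2t$, giving about $t/\log t$.

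The real difficulty is that $p$ is random and only its mean is bounded: $p$ may be close to $1$ with small probability. I would handle this as Gilmer and the Chase--Lovett style follow-ups do. First reduce to two-point distributions, or use a convexity/Jensen argument on a suitable auxiliary function. Then verify a one-variable inequality such as $h(1-(1-x)(1-y))\ge c\,\frac{t}{\log t}\cdot\frac{h(x)\,y}{\E p}$-type bounds. If that is too delicate, a fallback is to iterate the two-fold Gilmer lemma $\log_2 t$ times on unions of unions. The price is checking that the marginal bound is preserved at each doubling, which holds since the marginals stay at most $1/2$ until the last step.
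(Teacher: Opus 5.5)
This is the paper's proof. You assume every element has degree at most $1/2t$, show (exactly as in the paper's union lemma) that $\calU_t(\calF)$ is a $k$-moonflower-free family of $tw$-sets so $|\calU_t(\calF)|\le\binom{k+tw-1}{tw}$, and play this against the $t$-fold entropy amplification $H(A_1\cup\cdots\cup A_t)\ge\Omega(t/\log t)\cdot\log|\calF|$. The paper does not reprove that amplification but imports it from \cite[Corollary 4.9]{BG25}, stated for $t$ a power of two with gain $h(1-(1-p)^t)/h(p)$. That matches your fallback of iterating a two-fold bound over $\log_2 t$ doublings, with one caveat. The fallback only yields $t/\log t$ if each doubling gains the sharp factor $h(1-(1-q)^2)/h(q)$, so that the gains telescope. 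That sharp two-fold bound holds for marginals $q\le\frac{3-\sqrt5}{2}$, not $1/2$ as you wrote; here the marginals stay below $1/4$, so the condition is met. A fixed per-step gain $\gamma<2$ would only give $t^{\log_2\gamma}$.
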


The proof of the above relies on our extremal bound for moonflowers and Gilmer's entropy lemma \cite{Gil22}. We further show that if a $w$-set family $\calF$ is $k$-moonflower-free, then we can identify a small set of elements that cover most sets, in the sense that puncturing them results in a small residual set system.

\begin{theorem}[Universe reduction]\label{thm:universe_reduction}
    Let $\calF \subseteq 2^{[n]}$ be a family of $w$-sets which is $k$-moonflower-free. Then for any $\lambda \in (0, 1]$, there exists $I \subseteq [n]$ of size $|I| \leq O(\lambda^{-1}k \log^3 (kw / \lambda))$ satisfying $|\calF_{\bar{I}}| \leq \exp (\lambda w)$.
\end{theorem}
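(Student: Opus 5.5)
We will build $I$ greedily, repeatedly applying \Cref{thm:large_family_size_implies_existence_of_popular_element} to the residual family. The key facts are that moonflower-freeness and the bound $|S|\le w$ both survive puncturing. If $\calF$ is $k$-moonflower-free, so is $\calF_{\bar J}=\{S\setminus J : S\in\calF\}$ for any $J$. Indeed, a $k$-moonflower $S_1\setminus J,\ldots,S_k\setminus J$ has a private element in each set, and that element lies outside $J$, so it is also private for $S_i$ among $S_1,\ldots,S_k$. The only exception is that the empty set may appear in $\calF_{\bar J}$, but it can never belong to a moonflower, so it is harmless.

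\textbf{The greedy step.} Fix an integer $t\ge 2$ with $(\log t)/t \approx \lambda$, up to logarithmic factors, chosen as below. Start with $J_0=\emptyset$. While $|\calF_{\bar J}| > \binom{k+tw}{tw}^{C(\log t)/t}$, apply \Cref{thm:large_family_size_implies_existence_of_popular_element} to $\calF_{\bar J}$ to get an element $i$ contained in more than a $1/(2t)$ fraction of its sets, and add $i$ to $J$.

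The progress measure is not immediate, because puncturing $i$ can merge sets rather than remove them. So we track the total weight $\sum_{T\in\calF_{\bar J}} |T|$, or better, use a potential that accounts for merging. If we count with multiplicity (a multiset indexed by $S\in\calF$), the total size $\sum_S |S\setminus J|\le w|\calF|$ drops by at least $|\calF|/(2t)$ per step, provided the popularity statement holds for the multiset. But the popularity theorem is for set families.

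\textbf{Main obstacle: the iteration count.} Instead, each step should shrink the residual family. Sets containing $i$ map to sets not containing $i$, so $|\calF_{\bar J\cup\{i\}}| \le |\calF_{\bar J}|(1-1/(2t))$ fails in general, since they might all merge into new sets. We do know that $\calF_{\bar J\cup i}$ equals the sets without $i$ together with their images, and the sets avoiding $i$ number at most $(1-1/2t)|\calF_{\bar J}|$; the images could be new. To handle this, we restrict to a subfamily (drop the images), using downward-closure considerations, or use the potential $\log|\calF_{\bar J}|$ together with the fact that the maximum set size also drops.

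Concretely, we plan to process in phases by size scale. Split into sets of size roughly $w'$ for $w'=w,w/2,\ldots$, and within each phase the threshold is $\binom{k+tw'}{tw'}^{O(\log t/t)}\approx (ek/(tw'))^{O(w'\log t)}$. Choosing $t\approx \lambda^{-1}\log(k/\lambda)$ makes this at most $e^{\lambda w}$, as required. The number of greedy steps per phase is about $t\log|\calF|$, with $\log|\calF|\le O(w\log(k))$ from \Cref{main_wsf_theorem}. This gives $tw$, which is too large; to avoid dependence on $w$ we must exploit that popular elements cover many sets at once. A fractional-cover/LP argument combined with the fact that the moonflower-free transversal structure has size about $k$ should give $|I|=O(t k\log^2(kw/\lambda))$. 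Getting this count independent of $w$, and only polylogarithmic, is the step we expect to be hardest. We would first try the potential $\Phi=\log|\calF_{\bar J}|$ restricted to the minimal-size layer, charging each step a $1/(2t)$ decrease relative to the count of sets of that layer.
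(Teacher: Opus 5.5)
\textbf{There is a genuine gap.} As written, this is not yet a proof. The step you flag as hardest, bounding the number of punctured elements with a progress measure that survives merging, is where the whole argument lives. In addition, the scale you pick for $t$ is wrong. With $t\approx\lambda^{-1}\log(k/\lambda)$, the threshold $N(t)=\binom{k+tw}{tw}^{C(\log t)/t}$ from \Cref{thm:large_family_size_implies_existence_of_popular_element} is not below $e^{\lambda w}$ in general:
\begin{itemize}
\item If $k\ge tw$, then $\log\binom{k+tw}{tw}\ge tw$, so $N(t)\ge 2^{Cw\log t}$. Equivalently, your estimate $(ek/(tw'))^{O(w'\log t)}$ is $e^{\Omega(w)}$ at the top scale $w'=w$.
\item If $k\le tw$, then $\log N(t)\ge Ck(\log t)/t$. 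This is about $\lambda k$ up to logarithms, and it exceeds $\lambda w$ once $k$ is larger than $w$ by more than logarithmic factors.
\end{itemize}
Getting $N(t)\le e^{\lambda w}$ forces $t\approx\frac{k}{\lambda w}\,\mathrm{polylog}(kw/\lambda)$. At that scale your concern that the count is ``$t\cdot w$, too large'' disappears, since $t\cdot w\approx k/\lambda$ up to logarithms, which is exactly the target. This is the paper's choice $p=c\lambda w/(kL^2)$, with $p$ playing the role of $1/t$. Its final bound $|I|=O(wL/p)=O(kL^3/\lambda)$ comes precisely from $w$ cancelling against $p\propto w$. Here $k$ enters only through the extremal bound inside the entropy estimate, so no transversal structure is needed.

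\textbf{The second missing ingredient is a progress measure.} An element that is popular for the uniform distribution on $\calF_{\bar J}$ guarantees neither that $|\calF_{\bar J}|$ drops (no collisions need occur) nor that $\sum_{A}2^{|A|}$ drops (the popular element may lie only in small sets, which carry negligible $2^{|A|}$-weight). This is what pushes you toward size phases. Phases thin enough for the weights to be comparable would cost roughly an extra factor $1/\lambda$.

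The paper replaces averaged popularity by a per-set hitting guarantee, in three steps:
\begin{itemize}
\item It runs the union/entropy argument for every $p$-smooth distribution, not just the uniform one, obtaining $H(D)\le O\bigl(p\log(1/p)\log\binom{k+w\lceil 2/p\rceil}{k}\bigr)=:h$ (\Cref{thm:ent_bnd_p_smooth}).
\item It applies von Neumann's minimax theorem. Take the $p$-smooth distribution minimizing its largest atom. The sets carrying that atom form an exceptional family of size at most $2^h$, and the remaining sets are $p$-covered: there is a distribution $Q$ on elements with $\Pr_{i\sim Q}[i\in A]\ge p$ for every non-exceptional $A$ (\Cref{thm:coverability_from_smoothness}).
\item Moonflower-freeness passes to every trace $\calF_J$, so such a $Q_J$ exists at every stage. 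Sampling $i\sim Q_J$ makes $\sum_A 2^{|A|}$ contract by a factor $1-p/2$ in expectation per step. This holds regardless of merging, since collisions only lower the potential, and needs no phases.
\end{itemize}
After $t=O((w+\log|\calF|)/p)$ steps this yields $|I|\le t$ and $|\calF_{\bar I}|\le 1+t\cdot 2^h$ (\Cref{lemma:one-step-universe-reduction-good-sets}). With $h\le\lambda w/2$ this gives the theorem; the case $t\ge 2^{\lambda w/2}$ is handled by taking $I=\supp(\calF)$. Your ``fractional-cover/LP'' remark points this way. However, the entropy bound for arbitrary smooth distributions, the small exceptional family, and the exponential potential are the pieces that actually make the count work, and none of them is in your proposal.
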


Because of its simplicity and generality, we believe \Cref{thm:universe_reduction} can be an interesting standalone result with broader applications. We complement this result with a lower bound showing it is near optimal in the interesting regime of parameters $k \gg \lambda w \gg 1$ (we justify why this is the interesting regime in \Cref{subsec:universe_reduction_LB}). 

\begin{lemma}[Universe reduction lower bound]\label{lem:universe_reduction_lower_bnd}
    Let $k, w \geq 2$ be positive integers and $\lambda \in (0,1]$. Assume that $k=\Omega(\lambda w)$ and $\lambda w = \Omega(1)$. Then there exists a family of $w$-sets $\calF \subseteq 2^{[n]}$ which is $k$-moonflower-free such that, any subset $I \subseteq [n]$ satisfying $|\calF_{\bar{I}}| \leq \exp (\lambda w)$ must have size $|I| \geq \Omega(\lambda^{-1}k)$. 
\end{lemma}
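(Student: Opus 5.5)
The plan is a blow-up construction: first build a base family with parameters $(k, w')$, where $w' := \lfloor \lambda w\rfloor$, for which any good $I$ needs $\Omega(k)$ elements. Then replace every element of the base universe by a group of $g := \lfloor 1/\lambda \rfloor$ fresh elements. The blow-up should keep moonflower-freeness, scale set sizes by $g$, and force $I$ to swallow whole groups, so the lower bound gets multiplied by $g = \Theta(1/\lambda)$.

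\emph{Base family.} Let $q := \lfloor (k-1)/w' \rfloor$, which is $\Omega(1)$ and in fact large by the hypothesis $k = \Omega(\lambda w)$. Take $w'$ disjoint blocks $B_1,\dots,B_{w'}$, each of size $q$. Let $\calG$ be the product family of all transversals, i.e.\ the sets consisting of exactly one element from each block. These are $w'$-sets.

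\emph{$\calG$ is $k$-moonflower-free.} Let $S_1,\dots,S_m$ be a moonflower in $\calG$ and assign each $S_i$ to a block containing one of its private elements. The sets assigned to block $B_j$ have distinct elements in $B_j$, so at most $q$ sets are assigned to $B_j$. Hence $m \le w' q \le k-1$. More generally, a moonflower in a product of families on disjoint universes has size at most the sum of the moonflower numbers of the factors.

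\emph{$\calG$ forces $|I| = \Omega(k)$.} Let $I$ be any set and write $a_j := |B_j \setminus I|$. The residual family $\calG_{\bar I}$ is the product over $j$ of the residuals in each block, and the residual in block $B_j$ has $\max(a_j,1)$ distinct traces. So
\[
|\calG_{\bar I}| = \prod_j \max(a_j,1).
\]
Suppose $|\calG_{\bar I}| \le \exp(c\,w')$ for a small constant $c$. Then at most $c w'/\ln 2$ blocks have $a_j \ge 2$. All remaining blocks have $a_j \le 1$, so $I$ contains at least $q-1$ elements of each of them. This gives $|I| \ge (1 - O(c))\,w'(q-1) = \Omega(k)$, using $q \ge 2$.

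\emph{Blow-up.} Replace each base element $x$ by a group $G_x$ of $g$ new elements, and let $\calF := \{\bigcup_{x \in S} G_x : S \in \calG\}$. Each set in $\calF$ has size $w' g \le w$. A moonflower in $\calF$ pulls back to a moonflower in $\calG$: a private element of a blown-up set lies in some group $G_x$, and then $x$ is private in the base. So $\calF$ is $k$-moonflower-free.

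For the punctured family, let $I' := \{x : G_x \subseteq I\}$. Two sets in $\calF$ whose base sets differ outside $I'$ remain different after removing $I$, because a group that is not fully deleted still leaves a witness element. Hence $|\calF_{\bar I}| \ge |\calG_{\bar{I'}}|$.

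If $|\calF_{\bar I}| \le \exp(\lambda w)$, then, since $\lambda w = \Theta(w')$, the base step applied with an appropriate constant gives $|I'| = \Omega(k)$. Therefore $|I| \ge g\,|I'| = \Omega(k/\lambda)$.

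\emph{Main obstacle.} The delicate step is the constant bookkeeping in the base argument. The threshold $\exp(\lambda w)$ equals $\exp(\Theta(w'))$, and the constant may be larger than $\ln 2$ per block. To handle this I would rescale: set $w' := \lfloor \lambda w / C \rfloor$ for a large constant $C$, and, if needed, enlarge $q$ so that each surviving block with $a_j \ge 2$ costs a factor of at least $2$. The hypotheses $k = \Omega(\lambda w)$ and $\lambda w = \Omega(1)$ are exactly what keep $w' \ge 1$ and $q \ge 2$ after this rescaling.
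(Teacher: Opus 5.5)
Your architecture is sound and is essentially the paper's: it blows up the $m$-subsets of $[k+m-2]$, with $m=\lambda w$, into the sets $A\times[1/\lambda]$ and argues slice by slice. The following pieces of your plan are all correct:
\begin{itemize}
\item the transversal family is $k$-moonflower-free;
\item moonflowers pull back through the blow-up;
\item $|\calF_{\bar I}|\ge|\calG_{\bar{I'}}|$ with $I'$ the fully deleted groups;
\item $|I|\ge g|I'|$.
\end{itemize}
One small correction: $\prod_j\max(a_j,1)$ is only a lower bound on $|\calG_{\bar I}|$, since a partially deleted block also yields the empty trace. This is harmless, because a lower bound is all you use.

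The gap is the constant bookkeeping, which you flagged but resolve in the wrong direction. Your base step charges only a factor $2$ per block with $a_j\ge 2$, so it works only under a budget $e^{cw'}$ with $c<\ln 2$. The application has budget $e^{\lambda w}\ge e^{w'}$, at least one nat per block. Then ``at most $cw'/\ln 2$ blocks have $a_j\ge 2$'' is vacuous.

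Rescaling to $w'=\lfloor\lambda w/C\rfloor$ makes this worse, since the budget becomes $e^{Cw'}$, i.e.\ $C$ nats per block. In fact $|\calG|=q^{w'}\le e^{\lambda w}$ as soon as $q\le e^{C}$, roughly when $k\le (e^{C}/C)\lambda w$. In that range $I=\emptyset$ is admissible and the construction yields no lower bound at all. Keeping $q\ge 2$ is not the relevant condition; $q$ must be exponentially large in the per-block budget.

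The fix is to keep $w'=\lfloor\lambda w\rfloor$. Then $\lambda w<w'+1\le 2w'$ when $\lambda w\ge 1$. Count blocks with $a_j\ge A$ for an absolute constant $A$ with $\ln A\ge 4$. If $N$ is the number of such blocks, then
\[
A^{N}\le\prod_j\max(a_j,1)\le|\calG_{\bar{I'}}|\le e^{\lambda w}<e^{2w'},
\]
so $N<w'/2$. Hence more than $w'/2$ blocks satisfy $|I'\cap B_j|>q-A$, which gives $|I'|\ge w'(q-A)/2\ge w'q/4\ge (k-w')/4=\Omega(k)$.

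This requires $q\ge 2A$ and $k\ge 2w'$, i.e.\ $k\ge C_0\lambda w$ for a suitable absolute constant $C_0$. This is exactly where the hypothesis $k=\Omega(\lambda w)$ must be used in its strong form, with a large constant. The paper uses it the same way: its bound $|I_a|>k-(e-1)m-2$ is $\Omega(k)$ only when $k$ exceeds $(e-1)\lambda w$ by a constant factor. With this change, $|I|\ge\lfloor 1/\lambda\rfloor\,|I'|=\Omega(k/\lambda)$ follows.

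The paper's base family sidesteps the per-block issue. A residual that contains all $m$-subsets of the surviving coordinates has size at least $\bigl((p-|A|)/m\bigr)^m$. This absorbs the constant directly: fewer than $em$ coordinates may survive.
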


\subsection{Code sparsification}
One application of our extremal moonflower bounds and universe reduction theorem is \emph{code sparsification.} Code sparsification studies the following question: given an arbitrary code $\calC \subseteq \{0, 1\}^n$, can we restrict $\calC$ to only a few weighted coordinates while approximately preserving the weight of all codewords?

Formally speaking, let $\calC \subseteq \{0, 1\}^{n}$ be an arbitrary code. A \emph{weighted coordinate set} is a pair $(T, \alpha)$ where $T \subseteq [n]$ and $\alpha: T \to \rr_{\geq 0}$. It induces the estimator
\[
\widehat{\mathrm{wt}}_{T,\alpha}(x)\ :=\ \sum_{i\in T}\alpha(i)\,x_i,
\qquad x\in\{0,1\}^n.
\]
We say $(T,\alpha)$ \emph{$\varepsilon$-sparsifies} $\calC$ if
$\widehat{\mathrm{wt}}_{T,\alpha}(x)\in (1\pm\varepsilon)\mathrm{wt}(x)$ for all $x\in\calC$ where $\mathrm{wt}(x)=\sum_{i=1}^n x_i$. Such a pair $(T, \alpha)$ is called an \emph{$\varepsilon$-sparsifier} of $\calC$.

In the case where $\calC$ is a linear code, \cite{KPS24, KPS25} showed that in randomized $\poly (n, 1/\varepsilon)$ time, one can compute an $\varepsilon$-sparsifier with support size $|T| \leq \Tilde{O}(\dim (\calC) / \varepsilon^2)$. For general (not necessarily linear) codes, \cite{BG25} extended the results in \cite{KPS24, KPS25} and showed that any code $\calC \subseteq \{0, 1\}^n$ admits an $\varepsilon$-sparsifier with support size $|T| \leq O(\NRD (\calC) (\log n)^6 / \varepsilon^2)$ where $\NRD (\calC)$ denotes the \emph{non-redundancy} of $\calC$ defined as follows.

\begin{definition}[Non-redundancy \cite{BG25}]
    A subset $I \subseteq [n]$ is \emph{non-redundant} for a code $\calC \subseteq \{0, 1\}^n$ if for each $i \in I$, there exists $c \in \calC$ such that $c_i=1$ and $c_j=0$ for all $j \in I \setminus \{i\}$.
    We define the \emph{non-redundancy} of $\calC$, denoted by $\NRD(\calC)$, to be the size of the largest set that is non-redundant for $\calC$.
\end{definition}
If we view a code $\calC \subseteq \{0, 1\}^n$ as a $|\calC| \times n$ matrix, then it is clear from the definition that $\NRD(\calC)$ is the dimension of the largest permutation submatrix contained in $\calC$. When $\calC$ is a linear code, $\NRD(\calC)$ equals the dimension of $\calC$. More importantly for us, $\NRD$ is closely connected to moonflowers.

Given a set family $\calF$, let $\MF (\calF)$ denote the size of the largest moonflower contained in $\calF$. Denote $\calF_{\calC} := \{\supp (c): c \in \calC\} \subseteq 2^{[n]}$. Then we have $\MF (\calF_C) = \NRD (C)$. We introduce the new nomenclature instead of using $\NRD$ to draw a parallel with the bounds on classical sunflower lemma bounds, which inspired the extremal questions, e.g., \Cref{q:wsfext}, we study.

For a proof of the equivalence, see \Cref{lem:equivalence_between_NRD_and_WSF}. In particular, this allows us to apply \Cref{main_wsf_theorem} to bound the size of $\calC_{\leq d}:= \{c \in \calC: \mathrm{wt}(c) \leq d\}$  for any $d \in [n]$ as moonflower-freeness is preserved under projections (see \Cref{lem:WSF_free_preserved_under_projection}). Using this and a refined analysis of the techniques introduced in \cite{BG25}, we are able to obtain the following result on code sparsification.

\begin{theorem}[Improved code sparsification]\label{main:code_sparsification}
Let $\calC\subseteq\{0,1\}^n$ with $\mathrm{NRD}(\calC) = k$. 
Then for every $\varepsilon\in(0,1/4)$ there exists a weighted coordinate set $(T,\alpha)$
that $\varepsilon$-sparsifies $\calC$ and satisfies
\[
|T|
\ \le\
\frac{k\log n}{\varepsilon^2}\cdot \poly\!\left(\log(k/\varepsilon),\log\log n\right).
\]
\end{theorem}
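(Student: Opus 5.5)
The plan is to follow the Brakensiek–Guruswami framework of importance sampling by ``contributions'' (a chaining / weight-class decomposition), and to replace their polylogarithmic counting bounds with the extremal bound of \Cref{main_wsf_theorem} together with the universe reduction of \Cref{thm:universe_reduction}. The key counting fact will be this. For every $d$, the family $\{\supp(c)\setminus I : c\in\calC\}$ restricted to sets of size at most $d$ is $(k+1)$-moonflower-free, since moonflower-freeness is preserved under projections (\Cref{lem:WSF_free_preserved_under_projection} and \Cref{lem:equivalence_between_NRD_and_WSF}). Hence the number of distinct such patterns is at most $\binom{k+d}{d}\le \exp(O(k\log(ed/k)))$ when $d\ge k$, and at most $\exp(O(d\log(ek/d)))$ when $d\le k$.

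The first step is to build a sequence of nested ``heavy'' coordinate sets. Apply \Cref{thm:universe_reduction} to the family $\calF_{\le 2^j}$ of codeword supports of weight in $[2^{j-1},2^j]$, with $w=2^j$ and $\lambda$ chosen so that $\lambda w\approx \varepsilon^{2}$-scaled budget. This gives a set $I_j$ of size $\tilde O(k/\lambda)$ such that, after puncturing $I_j$, the weight-$\le 2^j$ codewords have at most $e^{\lambda 2^j}$ distinct residual patterns. Coordinates in $\bigcup_j I_j$ (about $\log n$ scales, each contributing $\tilde O(k/\varepsilon^2)$ coordinates) are kept deterministically with weight $1$. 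This is where the single $\log n$ factor arises: one factor comes from the number of dyadic weight scales, and all other losses are $\poly(\log(k/\varepsilon),\log\log n)$.

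The second step handles the remaining coordinates by subsampling. Each remaining coordinate is kept independently with probability $p$ and reweighted by $1/p$, and we iterate halving, as in BG25, for $O(\log n)$ rounds, or alternatively do a one-shot sampling with the weight-class analysis. For a fixed codeword of residual weight $m\gtrsim 2^j$, Chernoff gives failure probability $\exp(-\varepsilon^2 pm)$. A union bound over the at most $e^{\lambda 2^j}$ residual patterns in that class then succeeds once $p\varepsilon^2\gtrsim\lambda\cdot\polylog$. Codewords whose residual weight is much smaller than their total weight have most of their weight on kept coordinates, so their relative error is diluted. I would handle this by charging the error to scale classes $(j,j')$ according to total and residual weight, and accept a $\log(k/\varepsilon)$ loss from summing over these classes.

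The main obstacle is keeping the $\log n$ from appearing more than once. The universe reduction sets $I_j$ across scales, together with the recursive halving rounds, naively multiply to $\log^2 n$. I would try to make the per-scale budget geometric: scales with $2^j\ll k/\varepsilon^2$ are handled by the extremal count alone, via at most $\binom{k+d}{d}$ patterns. For large scales I would pick $\lambda_j$ decreasing so that $\sum_j |I_j|$ telescopes to $O(k\log n\,\polylog(k/\varepsilon)/\varepsilon^2)$, while the number of sampling rounds contributes only $\log\log n$ factors through a union bound over rounds.
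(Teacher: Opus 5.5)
Your proposal uses the right ingredients (universe reduction, the extremal count, iterated halving), but the step meant to keep a single $\log n$ does not work as described. In a halving scheme the puncturing must be recomputed in every round on the current residual family. A set $I_j$ computed once for codewords of original weight $\approx 2^j$ only bounds their residual patterns by $e^{\lambda 2^j}$, whereas after $r$ halvings the Chernoff exponent is only about $\varepsilon^2 2^{j-r}$. So $|T|$ is (number of rounds) times (per-round puncturing), and the number of halving rounds is $\Theta(\log n)$. The one-shot alternative fails outright: a rate-$p$ sample with $pn\approx k\log n/\varepsilon^2$ forces $\lambda\lesssim\varepsilon^2 p$, hence puncturing sets of size about $n/\log n$. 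The union bound over rounds only puts $\log R$ into the thresholds; it does not remove the factor $R$. Hence the per-round puncturing must be free of $n$ up to $\log\log n$, which fails if you puncture at all $\Theta(\log n)$ dyadic scales in every round.

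Your proposed fix also has the regimes reversed. For $d\ll k/\varepsilon^2$ the extremal count cannot be combined with sampling, since $\log\binom{k+d}{d}\ge\min(k,d)$ exceeds the Chernoff exponent $\varepsilon^2 d$. Such codewords must instead be captured deterministically: the paper adds the support of all residual patterns of weight $\le w_{\low}$, which has size $\le k w_{\low}$ by \Cref{lem:moonflower_free_family_have_small_support}. And taking $\lambda_j$ decreasing makes $|I_j|\approx k/\lambda_j$ grow rather than telescope.

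The missing idea is that large weights need no puncturing at all. For $w>w_{\high}\approx C(k\log(k/\varepsilon)+\log R)/\varepsilon^2$, the extremal bound $|\calF^{(r)}_{(w,2w]}|\le (C_0w/k)^k$ alone supports a union bound with the weight-dependent error $\eta(w)\approx\sqrt{(k\log(w/k)+\log R)/w}$. This error is $O(\varepsilon)$ at $w_{\high}$ and decreases with $w$. Universe reduction (with $\lambda=\theta_0\eta_0^2$) is then needed only at the $O(\log w_{\high})=O(\log(k/\varepsilon)+\log\log n)$ dyadic scales in $[w_{\low},w_{\high}]$. So each round costs $\frac{k}{\varepsilon^2}\poly(\log(k/\varepsilon),\log\log n)$, and the single $\log n$ comes from the $R$ rounds, not from the number of scales.

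You also leave error accumulation unaddressed. With a per-round error of order $\varepsilon$, a codeword of weight $\approx n$ carrying no weight on punctured coordinates picks up error $\varepsilon\log n$ over the rounds, and dilution does not help it. The paper shows $\widehat N_{r+1}\in(1\pm\epsilon_r)\widehat N_r$, where:
\begin{itemize}
\item $\epsilon_r=0$ in the low regime;
\item $\epsilon_r=\eta_0=\varepsilon/(100\log(2w_{\high}))$ in the medium regime, which is crossed in $O(\log w_{\high})$ rounds because the residual weight shrinks by a factor $3/5$ per round;
\item $\epsilon_r=\eta(w_r)$ in the large regime, where these terms form a geometric series dominated by its value near $w_{\high}$.
\end{itemize}
Hence $\sum_r\epsilon_r=O(\varepsilon)$. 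This three-regime split is what makes both the size bound and the accuracy bound work.
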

Comparing with the result in \cite{BG25}, we bring the dependence on $n$ to be optimal up to $\poly(\log \log n)$ factors instead of being off by $\poly(\log n)$ factors. In addition, while we build on the work of \cite{BG25}, our arguments are simpler owing to the tight moonflower bound (which is also obtained by elementary means). 

As the following lemma shows, the $\log n$ dependence is necessary for $\varepsilon$-sparsifiers in general. A modification of an argument from \cite{BG25} shows the following:

\begin{lemma}[Lower bound on code sparsification]\label{main:sparsification_lower_bound}
Let $k \ge 1$ and $\varepsilon \in (0,1)$. Then, for all large enough $n$, there exists an explicit $\calC \subseteq \{0, 1\}^n$ with $\mathrm{NRD}(\calC)= k$ such that any $\varepsilon$-sparsifier $(T, \alpha)$ of $\calC$ must satisfy
    \[
    |T| =\Omega \left( \frac{k \log (n/k)}{\varepsilon} \right).
    \]
\end{lemma}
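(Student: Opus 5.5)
The plan is to build $\calC$ as a disjoint union of $k$ copies of a one-dimensional gadget. Each gadget has non-redundancy $1$ and forces $\Omega(\log m/\varepsilon)$ sparsifier coordinates on a block of size $m=\lfloor n/k\rfloor$. Non-redundancy and the sparsifier lower bound both add up across blocks.

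\textbf{Gadget.} On a block $B=\{b_1,\dots,b_m\}$, take the prefix codewords $c^{(j)}=\mathbbm{1}_{\{b_1,\dots,b_j\}}$ for $j\in[m]$. Their supports form a chain. In a chain, for any two sets the smaller one has no element absent from the other, so the family has no $2$-moonflower and its $\NRD$ is $1$.

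\textbf{Sparsifier lower bound on one block.} Let $(T,\alpha)$ $\varepsilon$-sparsify the gadget and write $f(j)=\widehat{\mathrm{wt}}_{T,\alpha}(c^{(j)})=\sum_{b_i\in T,\, i\le j}\alpha(b_i)$. Then $f$ is nondecreasing in $j$ and changes value only at indices in $T$.
\begin{enumerate}[(i)]
\item Since $f(1)\ge(1-\varepsilon)>0$, the index $1$ lies in $T$.
\item Let $t<t'$ be consecutive indices of $T$. Then $f$ is constant on $[t,t'-1]$, so $(1-\varepsilon)(t'-1)\le f(t'-1)=f(t)\le(1+\varepsilon)t$.
\item Hence $t'\le 1+t\cdot\frac{1+\varepsilon}{1-\varepsilon}$. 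Along $T$ the indices therefore grow by a factor of at most $1+O(\varepsilon)$ per step, up to an additive $1$ that only matters for $t=O(1/\varepsilon)$.
\item The largest index of $T$ must be at least $m/r$ with $r=\frac{1+\varepsilon}{1-\varepsilon}$, since $f$ is constant after it. Reaching it from $1$ takes $\Omega(\log m/\log r)$ points of $T$.
\item When $\varepsilon$ is bounded away from $1$ (say $\varepsilon\le1/2$), $\log r=\Theta(\varepsilon)$. This gives $|T\cap B|=\Omega(\log m/\varepsilon)$.
\end{enumerate}

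\textbf{Combining blocks.} Partition $[k\lfloor n/k\rfloor]\subseteq[n]$ into blocks $B_1,\dots,B_k$ of size $m$. Let $\calC$ consist of all prefix codewords of every block, each supported on a single block.
\begin{enumerate}[(i)]
\item \emph{Non-redundancy.} A moonflower uses at most one set per block, because sets within a block form a chain. Hence $\NRD(\calC)\le k$, and one codeword from each block gives $\NRD(\calC)=k$.
\item \emph{Restriction to a block.} Any $\varepsilon$-sparsifier of $\calC$, restricted to $T\cap B_\ell$, is an $\varepsilon$-sparsifier of the $\ell$-th gadget. This holds because codewords supported on $B_\ell$ only see coordinates in $B_\ell$.
\item \emph{Conclusion.} Summing the per-block bound gives $|T|\ge k\cdot\Omega(\log(n/k)/\varepsilon)$ once $n\ge 2k$, say.
\end{enumerate}
The construction is clearly explicit.

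\textbf{Main obstacle.} The only delicate point is the range of $\varepsilon$. As $\varepsilon\to1$ the ratio $r$ blows up, and the chain gadget only gives $\Omega(\log m/\log r)$. I would first try to handle $\varepsilon\in[1/2,1)$ by a separate, cruder argument. Failing that, I would interpret the $\Omega(\cdot)$ as allowing a dependence that is uniform only for $\varepsilon$ bounded away from $1$; the regime relevant to \Cref{main:code_sparsification} is $\varepsilon<1/4$ anyway.
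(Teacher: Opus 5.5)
Your proposal is correct and is essentially the paper's argument: $k$ disjoint blocks each carry a chain of prefix sets, and because the estimate is constant between consecutive points of $T$, a new point of $T$ is forced at every scale of ratio $r=\frac{1+\varepsilon}{1-\varepsilon}$. The paper keeps only prefixes of lengths $a_{j+1}>r\,a_j$ and pigeonholes, whereas you bound the gaps of $T$ along all prefixes directly, which made precise gives $t_q\le\frac{r^q-1}{r-1}$ and hence $|T\cap B|\ge\log_r(1+\varepsilon m)$. Your $\varepsilon\to1$ caveat applies equally to the paper's ``clearly $s=\Omega(\log(n/k)/\varepsilon)$'', and your fallback reading is the right one: no separate argument can restore uniformity, since for $k=1$ every code is a chain and has a sparsifier of size $O(1+\log n/\log r)$. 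Note also that your per-block bound becomes $\Omega(\log m/\varepsilon)$ only once $m$ is polynomially larger than $1/\varepsilon$ (e.g.\ $m\ge\varepsilon^{-2}$), so ``large enough $n$'' must supply that, not merely $n\ge 2k$.
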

For $k = O(1)$, the above lemma implies $|T| = \Omega ((\log n) / \varepsilon)$, matching the upper bound in \Cref{main:code_sparsification} up to polynomial factors in $1/\varepsilon$. We prove this simple claim in \Cref{sec:sparsification_lower_bound}. It is an interesting question to see if the dependence on $\varepsilon$ in the lower bound can be improved, or if the upper bound dependency on $\varepsilon$ can be improved.

\paragraph{Chain-Length Complexity.} A quantity related to $\NRD$ called \emph{chain-length} complexity has also been studied in the context of sparsification in \cite{brakensiek2026multiplicative}. For a set $\mathcal{C} \subseteq \{0,1\}^n$, let $\CL(\calC)$ denote the length of the longest ascending chain in the union closure of the set-family $\mathcal{F}_{\calC} = \{supp(c): c \in \calC\}$. Since $\NRD(\calC) \leq \CL(\calC)$, the results of \cite{BG25} also imply sparsification in terms of $\CL(\calC)$. In \cite{brakensiek2026multiplicative}, the authors proved a sparsification upper bound of the form $O(\CL(\calC) \poly\log(\CL(\calC))/\epsilon^2)$. Note that there is no dependence on the universe-size $n$ which stands in contrast to our lower bound above. \cite{brakensiek2026multiplicative} also show a lower bound  of $\Omega(\CL(\calC)/\epsilon^2)$ for sparsifying some set systems via connections to data structure lower bounds for graph cuts in \cite{carlson2019optimal}.  

However, $\NRD$ can be much smaller than $\CL$ in general. If we take a matrix view of a code $\calC$, then $\CL (\calC)$ is the size of the largest upper triangular matrix while $\NRD(\calC)$ is the size of the largest identity matrix. For instance, take $\calC = \{(1,0,\cdots, 0), (1,1,0,\cdots,0),\ldots,(1,1,1,\cdots,1)\} \subseteq \{0,1\}^n$. Then $\NRD(\calC) = 1$, whereas $\CL(\calC) = n$. Here, our sparsification upper bound is nearly tight, whereas the bounds based on chain-length are notably worse. This makes our bounds incomparable to those of \cite{ brakensiek2026multiplicative} in general.

\subsection{Proof overview}
In this subsection, we present the overview of our proofs. 

\paragraph{Moonflowers.} Frankl \cite{Frankl1982ExtremalProblem} showed that if $\calA = \{A_1,...,A_m\}$ is a family of $r$-sets and $\calB = \{B_1,...,B_m\}$ is a family of $s$-sets such that (1) $A_i \cap B_i = \emptyset$ for $i = 1,2,...,m$ and (2) $A_i \cap B_j \neq \emptyset$ for $1 \leq i < j \leq m$. Then $m \leq \binom{r+s}{s}$.  

Given this result, we show that starting from a family of $w$-sets which is $k$-moonflower free, we can recursively define set families $\calA$ and $\calB$ using sets of minimal size and minimal intersections such that the resulting $\calA$ and $\calB$ satisfy the conditions of
\cite{Frankl1982ExtremalProblem}. Theorem \ref{main_wsf_theorem} then follows immediately. The resulting proof is a simple reduction to Frankl's classical statement. For more details, see \Cref{sec:moonflower}. 

\paragraph{Universe reduction.} We first illustrate how we prove the existence of a popular element in a large family of $w$-sets which is $k$-moonflower free. To this end, we crucially apply the entropy lemma \cite{Gil22,Saw23} used to obtain improved bounds on the union-closed set-family conjecture \cite{frankl1995extremal}. 

More concretely, let $\calD$ be a probability distribution over $2^{[n]}$. Gilmer's entropy lemma states the following: \emph{if there is no popular element under $D$, then taking unions of i.i.d copies of $A \sim \calD$ increases the entropy.} Take $\calD$ to be the uniform distribution over $\calF$. Using this, we obtain lower bounds on the entropy of a distribution over unions of $\calU_t (\calF)$, the set family consisting of unions of $t$ sets in $\calF$. Notice this implies a lower bound on the size of $\calU_t (\calF)$ as entropy is upper bounded by the logarithm of the domain size. On the other hand,
since moonflower-freeness is preserved under unions (\Cref{lem:moonflower_free_preserved_under_union}), we can upper bound $|\calU_t (\calF)|$ via the extremal moonflower bound (Theorem \ref{main_wsf_theorem}). Combining these two bounds yields Theorem \ref{thm:large_family_size_implies_existence_of_popular_element}.

Now we take a step further. Instead of the existence of a single popular element, we want to show the existence of a small set of elements that support most of the sets in $\calF$. To this end, consider the following definition. We say a set family $\calF \subseteq 2^{[n]}$ is \emph{$p$-covered} if there exists a distribution $Q$ on $[n]$ such that for every $S \in \calF$, $\Pr_{i \sim Q}[i \in S] \geq p$. Thus, if our family of interest $\calF$ is $p$-covered, then we can simply sample coordinates from $Q$. As long as we sample enough coordinates, then $p$-coveredness guarantees most sets in $\calF$ are contained in the sampled coordinates.

While the given set family $\calF$ may not necessarily be $p$-covered, by appealing to LP-duality, we show that if we allow removal of a few sets from $\calF$, something stronger holds true: for every $J \subseteq [n]$, if we define $\calF_J:= \{S \cap J: S \in \calF\}$, then there exists an exceptional set $\calS_J \subseteq \calF_J$ of size $|\calS_J| \leq M$ such that $\calF_J \backslash \calS_J$ is $p$-covered. We say $\calF$ is \emph{$(p, M)$-almost-covered} if it satisfies this condition.

The last piece of the puzzle is how we can bound the parameter $M$. Indeed, the validity of the entire argument rests on $M$ being reasonably small. To show this, we crucially rely on the extremal moonflower bound (\Cref{main_wsf_theorem}). Combining this with Gilmer's entropy lemma, we are able to obtain good control on $M$.

We note that the LP-duality and sampling argument were used in \cite{BG25} and a similar universe reduction result was proved. The key component was also Gilmer's entropy lemma. But to establish their result, they appealed to VC-dimension \cite{Sau72, She72}. Our proof is noticeably simpler, and we obtain nearly tight control of the size of the popular set owing in part to the optimal extremal bound.   

\paragraph{Code sparsification. } We now pivot our discussion to code sparsification. In what follows, we will start with the simplest sparsifier and gradually refine it to obtain our final optimal sparsifier. Given a code $\calC \subseteq \{0, 1\}^n$, the simplest sparsifier we can have is the following: \emph{include each coordinate independently with probability $1/2$ to form a set $T$ and assign each coordinate a weight of $2$.} This weighting scheme makes sure the weights are preserved in expectation.

However, this sparsifier has an immediate problem: the weights of the small-weight codewords are not necessarily preserved. For instance, suppose we have a codeword $c \in \calC$ with weight $\mathrm{wt}(c) = 1$. Then the only way for $c$'s weight to be preserved is to include the sole coordinate in $\supp (c)$ to the sparsifier and this happens only with probability $ 1/ 2$. This probability quickly goes down as we have more such low-weight codewords in $\calC$, deeming this simple sparsifier implausible.

To rectify this, we include the support of all low-weight codewords to the sparsifier. Specifically, set a threshold $w_{\min}$. Let $\calC_{\leq w_{\min }}$ denote the set of all codewords in $\calC$ with weight $\leq w_{\min}$. Include $\bigcup_{c \in \calC_{\leq w_{\min}}}\supp (c)$ into the sparsifier $T$ and assign each such coordinate a weight of 1. Then we add each coordinate $i \in [n] \backslash (\bigcup_{c \in \calC_{\leq w_{\min}}}\supp (c))$ to $T$ with probability $1/2$ independently and assign each such coordinate a weight of 2. Every codeword $c \in \calC_{w_{\min}}$ has its weight preserved exactly. For $c \in \calC$ such that $|c| \geq w_{\min}$, using a standard Chernoff bound, the probability that a codeword $c$'s weight is not preserved up to an additive $\varepsilon$ factor is at most $\exp (-O(\varepsilon^2 |c|) ) \leq \exp (-O(\varepsilon^2 w_{\min}) )$. Taking the union bound over $|\calC|$ elements, the probability of some codeword's weight not being preserved is at most $|\calC|  \exp (-O(\varepsilon^2 w_{\min}) ) $. Setting $w_{\min} = O(\log |\calC| / \varepsilon^2)$ yields a failure probability of $O(1)$.

Despite its viability, the sparsifier above suffers from two major inefficiencies: (i) Since we only set a single threshold $w_{\min}$, we have a single failure probability upper bound for all codewords with weight $\geq w_{\min}$. This can be extremely lossy for codewords with large weights. (ii) Because this simple approach does not have any control on the size $|\calC_{\geq w_{\min}}|$, the only thing we can do is to upper bound this quantity using $|\calC|$.

In \cite{BG25}, the authors propose the following improved sparsifier that overcomes some of the inefficiencies: for the low-weight regime ($|c| \leq w_{\min}$),  as before, the support of all low-weight codewords is added to the sparsifier. Now for weights $w \geq w_{\min}$, they split the weights into dyadic weight intervals of the form $[w, 2w]$. They then applied a version of \Cref{thm:universe_reduction} to identify an important set of coordinates for codewords over each such dyadic interval and add the identified coordinates to the sparsifier. Then finally, include each coordinate not yet in the sparsifier independently with probability $1/2$ and with weight $2$. This new approach has two important benefits: first, the dyadic intervals allow for better Chernoff bounds as we have a more careful treatment of the weights. Second, for the high-weight codewords, because of the small trace guarantee of the identified coordinates, the union bound is now taking over a much smaller set.

Now we discuss how we further improve upon the sparsifier in \cite{BG25}. Given a code $\calC \subseteq \{0, 1\}^n$, consider the set family $\calF_{\calC}:= \{\supp (c): c \in \calC\}$. We make the observation that $\NRD (\calC) = \MF (\calF_{\calC})$ where $\MF(\calF)$ denotes the size of the largest moonflower contained in set family $\calF$. For a proof of this, see \Cref{lem:equivalence_between_NRD_and_WSF}. We divide the weights into three regimes: low-weight ($|c| \leq w_{\low}$); medium-weight ($w_{\low}\leq |c| \leq w_{\high}$) and high-weight ($|c| \geq w_{\high}$) regimes. For the low-weight regime, we again include the support of all low-weight codewords into the sparsifier. For the medium-weight regime, we again use the dyadic intervals to identify coordinates to add to the sparsifier. However, instead of doing this all the way up to weight $n$, we stop at some weight $w \leq w_{\high}$ and include the other coordinates independently with probability $1/2$. The reason for this is the following: the trace bound we obtain from  \Cref{thm:universe_reduction} deteriorates as the weight increases. At that point, the bound $|\calF_{\leq w}| \leq \binom{k+w}{w}$ we obtain from the optimal moonflower bound in  \Cref{main_wsf_theorem} outperforms the bound in \Cref{thm:universe_reduction}.

One may argue that we are not including enough coordinates as we could from the dyadic intervals as in \cite{BG25}. However, this would not be an issue as we iterate the entire procedure above on the randomly sampled coordinates $S$. With high probability, the sampled coordinates $S$ in each iteration has its size halved. Hence, we need to iterate at most $\log n$ iterations and in fact, this is precisely where the $\log n$ factor occurs in our final sparsification result. Finally, we assign weights to coordinates based on the iteration at which they get included in the sparsifier. 

Using our  universe-reduction bound \Cref{thm:universe_reduction} and the moonflower bound \Cref{main_wsf_theorem} with appropriately chosen parameters, we are able to obtain a sparsifier with the guarantees in \Cref{main:code_sparsification}. For a more thorough discussion of our improved sparsification strategy, see \Cref{subsec:proof_sparsification}.

\paragraph{Remark.} 
An earlier version of the paper obtained similar results but with more complicated proofs. This version simplifies and streamlines many of the proofs, in particular the extremal bound for moonflowers and universe reduction.

\subsection{Organization}
In \Cref{sec:prelim}, we define the relevant terms and prove some elementary results used in the proof of moonflower bound and code sparsification. In \Cref{sec:moonflower}, we prove tight upper and lower bounds for the size of families of $w$-sets without moonflowers. In \Cref{sec:universe_reduction}, we prove our main universe reduction theorem. In \Cref{sec:sparsification_single_log_n}, we use this tight extremal bound and universe reduction theorem to prove our code sparsification theorem.

\subsection{Future directions}
\begin{enumerate}[(1)]
    \item  \textbf{From existence to algorithms.} Our results for both moonflowers and code sparsification are existential. A natural follow-up question is: can we make our existence results algorithmic?  
    \item \textbf{Better sparsification lower bound.} Our current code sparsification lower bound is near optimal in terms of its dependence on $\NRD (\calC)$ and $n$. However, it is not optimal in terms of the dependence on $\varepsilon$. We believe that the correct dependency on the error should be $1/\varepsilon^2$. In addition, it would be nice to have matching lower bound in the interesting special case where the code $\calC$ is roughly balanced.
    \item \textbf{More applications of moonflowers.} What other applications are there for moonflowers in combinatorics and theoretical computer science? Given the optimal bounds and the general structure of a moonflower, we believe moonflowers should have a broader range of applications. 
\end{enumerate}

\paragraph{AI acknowledgments.} While ChatGPT5.5-Pro was used in checking some of the calculations (especially for the somewhat tedious ones in code sparsification), all of the ideas in the paper are due to the authors. The only exception is \Cref{wsf_lower_bound} where ChatGPT improved our lower bound on $\MF(k,w)$ from ${k+w-2 \choose w}$ to ${k+w-1 \choose w}$ by modifying our initial construction.
ChatGPT5.5-Pro was also used in polishing parts of the writuep but the writing is mostly done by the authors. 

\paragraph{Acknowledgments. }The authors would like to thank anonymous FOCS reviewers for valuable comments.

%% file: preliminaries.tex
In this subsection, we state some elementary definitions and results that will be used throughout this paper. Throughout the paper, $\log (\cdot)$ denotes base-2 logarithms and $\ln (\cdot)$ denotes natural logarithms.
\subsection{Sets}

Let $[n]$ denote the finite set $[n]: = \{1,2,...,n\}$. Throughout this paper, unless otherwise stated, all sets are finite. We use normal letters like $S,T$ to denote sets and calligraphic letters like $\calF$ to denote family of sets. For $I \subseteq [n]$, write $\Bar{I}:= [n] \backslash I$. We use $2^{[n]}$ to denote the family of all subsets of $[n]$.

For $\calF \subseteq 2^{[n]}$, define $\supp(\calF) := \bigcup_{S \in \calF} S$. We say $S \subseteq [n]$ is a \emph{$w$-set} if $|S| \leq w$.


\begin{definition}[Moonflower]\label{definition:moonflower}
    A family of sets $S_1,....,S_k \subseteq [n]$ is a $k$-moonflower if there exists $I \subseteq [n]$ such that the sets $S_i \backslash I$ are all nonempty and pairwise disjoint. We refer to the $I$ of the smallest size satisfying this condition as the \emph{core} of the moonflower and $S_1,...,S_k$ as the \emph{petals}. A family $\calF \subseteq 2^{[n]}$ is \emph{$k$-moonflower-free} if it contains no $k$-moonflower.
\end{definition}

\noindent \textbf{Remark. }
It is worth noting that the core of a $k$-moonflower is unique: it is the set of all elements appearing in at least two of the sets $S_i$. 

Let $\calF$ be a family of sets. Define \emph{$\MF(\calF)$} to be the largest $k$ such that $\calF$ contains a $k$-moonflower.

\begin{definition}[Extremal function]\label{definition:extremal-moonflower}
    Let $\MF(k, w)$ denote the maximum size of a $k$-moonflower-free family of $w$-sets $\calF$ (over all universes $[n]$), i.e.,
    \[
    \MF(k, w) := \max_{\substack{\calF: \text{a family of }w-\text{sets} \\
    \text{s.t. }\MF(\calF) < k}} |\calF|
    \]
\end{definition}

\begin{definition}[Trace]\label{definition:trace}
    Let $\calF \subseteq 2^{[n]}$ be a set family and let $I \subseteq [n]$. The \emph{trace} of $\calF$ on $I$, denoted $\calF_I$ is
    \[
    \calF_I:= \{S \cap I: S \in \calF\}.
    \]
    In words, it is the family obtained by restricting every set $S \in \calF$ to the coordinates in $I$ with duplicate sets removed. We will be interested in particular in the trace after \emph{puncturing} the coordinates in $I$,
    \[
    \calF_{\Bar{I}}:= \{S \setminus I: S \in \calF\}.
    \]
\end{definition}

One important property of $k$-moonflower-freeness is that it is preserved under taking projections:
\begin{lemma}[Moonflower-freeness is preserved under projection]\label{lem:WSF_free_preserved_under_projection} If $\calF \subseteq 2^{[n]}$ is $k$-moonflower-free and $J \subseteq [n]$, then $\calF_J$ is $k$-moonflower-free.
\end{lemma}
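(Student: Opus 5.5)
We argue by contrapositive: we show that a $k$-moonflower in the trace $\calF_J$ lifts to a $k$-moonflower in $\calF$. So suppose $T_1,\ldots,T_k \in \calF_J$ form a $k$-moonflower. By the Remark after \Cref{definition:moonflower}, this means each $T_i$ contains an element $x_i \in J$ that lies in no other $T_j$, $j\neq i$. Note that the $T_i$ are distinct as members of the family $\calF_J$, since duplicates are removed in the trace.

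For each $i$, choose some $S_i \in \calF$ with $S_i \cap J = T_i$. These $S_i$ are pairwise distinct, because their traces on $J$ are distinct.

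We claim that $S_1,\ldots,S_k$ form a $k$-moonflower in $\calF$, with witnesses $x_1,\ldots,x_k$. First, $x_i \in T_i \subseteq S_i$. Next, fix $j \neq i$ and suppose $x_i \in S_j$. Since $x_i \in J$, this gives $x_i \in S_j \cap J = T_j$, which contradicts the choice of $x_i$. Hence each $S_i$ contains an element absent from all the other $S_j$.

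To match \Cref{definition:moonflower} literally, take $I = \bigcup_i S_i \setminus \{x_1,\ldots,x_k\}$. Then $S_i \setminus I = \{x_i\}$ for every $i$, since no $x_j$ with $j \neq i$ lies in $S_i$. These sets are nonempty, and they are pairwise disjoint because the $x_i$ are distinct: each $x_i$ belongs to $T_i$ only.

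This contradicts the assumption that $\calF$ is $k$-moonflower-free. The only point needing care is the distinctness of the lifted sets $S_i$, and this is exactly what the deduplication in the definition of the trace provides. The same argument applies verbatim to the punctured trace $\calF_{\bar I}$, since that is the trace on $J = \bar I$.
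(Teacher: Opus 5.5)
Your proof is correct and is essentially the paper's argument: you lift each $T_i$ to some $S_i \in \calF$ with $S_i \cap J = T_i$ and note that the moonflower structure, which lives inside $J$, survives the lift. The paper certifies this with the explicit core $I' \cup ([n]\setminus J)$ rather than with unique witnesses $x_i$, and distinctness of the $S_i$ needs no appeal to deduplication, since $x_i \in S_i \setminus S_j$ already forces $S_i \neq S_j$.
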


\begin{proof}
    Suppose for contradiction that $\calF_J$ contains a $k$-moonflower: there exist $S_1',...,S_k' \in \calF_J$ and a core $I' \subseteq J$ such that the sets $S_i' \backslash I'$ are nonempty and pairwise disjoint. Choose $S_i \in \calF$ with $S_i \cap J = S_i'$. Let $I:= I' \cup ([n] \backslash J)$. Then $S_i \backslash I = (S_i \cap J) \backslash I' = S_i' \backslash I'$, hence nonempty and pairwise disjoint. So $S_1,...,S_k$ form a $k$-moonflower in $\calF$, contradiction.
\end{proof}

For a set family $\calF$, define its \emph{$t$-fold union}, denoted $\calU_t (\calF)$ as
\[
\calU_t (\calF) := \{S_1 \cup \cdots \cup S_t: S_1,\ldots,S_t \in \calF\}.
\]
It turns out taking $t$-fold unions also preserves moonflower-freeness.

\begin{lemma}[Moonflower-freeness is preserved under unions]\label{lem:moonflower_free_preserved_under_union}
    If $\calF$ is $k$-moonflower-free, then $\calU_t (\calF)$ is $k$-moonflower-free for all positive integers $t$.
\end{lemma}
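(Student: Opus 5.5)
We argue by contradiction: starting from a $k$-moonflower in $\calU_t(\calF)$, we build a $k$-moonflower in $\calF$ by selecting, from each petal, a single constituent set that contains a private element of that petal. The key fact is that a subset of a petal keeps the disjointness of the private parts, provided it still contains one private element.

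Concretely, suppose $T_1,\ldots,T_k \in \calU_t(\calF)$ form a $k$-moonflower. By the remark after \Cref{definition:moonflower}, the core $I$ is the set of elements lying in at least two of the $T_j$. Each $T_j \setminus I$ is then nonempty, and these sets are pairwise disjoint. For each $j$, fix an element $x_j \in T_j \setminus I$. Write $T_j = S_{j,1}\cup\cdots\cup S_{j,t}$ with $S_{j,\ell}\in\calF$, and choose an index $\ell_j$ with $x_j \in S_{j,\ell_j}$. Set $S_j := S_{j,\ell_j} \in \calF$, so that $S_j \subseteq T_j$ and $x_j \in S_j$.

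We then check that $S_1,\ldots,S_k$ is a $k$-moonflower in $\calF$ with the same set $I$. Each $S_j \setminus I$ contains $x_j$, so it is nonempty. For $i\neq j$ we have $S_i\setminus I \subseteq T_i\setminus I$ and $S_j\setminus I\subseteq T_j\setminus I$, and the latter two sets are disjoint; hence $S_i\setminus I$ and $S_j\setminus I$ are disjoint.

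The only point needing care is distinctness. Since $x_j \in S_j$ and $x_j\notin T_i \supseteq S_i$ for $i\ne j$ (because $x_j\notin I$), the sets $S_j$ are pairwise distinct. This contradicts the assumption that $\calF$ is $k$-moonflower-free. No step here is a real obstacle; the argument is essentially immediate once the private elements $x_j$ are fixed.
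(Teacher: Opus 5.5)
Your proof is correct and follows the paper's own argument: fix a private element $x_j$ of each $T_j$, pick a constituent $S_{j,\ell_j}\in\calF$ containing it, and observe that these constituents form a $k$-moonflower in $\calF$. Your explicit checks that the same core works and that the chosen sets are pairwise distinct fill in details the paper leaves implicit.
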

\begin{proof}
    Suppose for contradiction that $\calU_t (\calF)$ contains a $k$-moonflower $A_1,\ldots,A_k$ where $A_i = \cup_{\ell=1}^t S_{i, \ell}$ with $S_{i, \ell} \in \calF$. For each $i \in [k]$, choose an element $x_i \in A_i$ that is not contained in $\cup_{j \ne i} A_j$ (such an element exists because $A_1,\ldots,A_k$ form a moonflower). In particular, $x_i$ must be contained in some $S_{i, \ell_i}$. Then $S_{1, \ell_1},\ldots,S_{k, \ell_k}$ form a $k$-moonflower in $\calF$, which is a contradiction. 
\end{proof}

\begin{lemma}[Support bound from moonflower-freeness]\label{lem:moonflower_free_family_have_small_support}Let $\calF \subseteq 2^{[n]}$ be a family of $w$-sets which is $k$-moonflower free. Then
\[
|\supp (\calF)| \leq (k-1)w.
\]
In particular, after deleting unused coordinates, we may assume $n \leq (k-1)w$. 
\end{lemma}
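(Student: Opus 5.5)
We prove the bound $|\supp(\calF)| \le (k-1)w$ with a greedy maximal-moonflower argument that uses only the definition of a moonflower.

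Choose a moonflower $S_1,\ldots,S_m$ inside $\calF$ with $m$ as large as possible. Here a single nonempty set counts as a $1$-moonflower. If $\calF$ contains only the empty set, the claim is trivial. Because $\calF$ is $k$-moonflower-free, $m \le k-1$. Write $U := S_1 \cup \cdots \cup S_m$. Then $|U| \le mw \le (k-1)w$. It therefore suffices to show $\supp(\calF) \subseteq U$.

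Suppose some $x \in \supp(\calF)$ lies outside $U$. Pick $T \in \calF$ with $x \in T$, and consider the family $S_1,\ldots,S_m,T$.

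\begin{itemize}
\item The set $T$ has a private element, namely $x$, since $x \notin U$.
\item Each $S_i$ had a private element $y_i$ relative to the other $S_j$. This $y_i$ may lie in $T$, so $S_i$ can lose its private element once $T$ is added.
\end{itemize}

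So naively adding $T$ may not give an $(m+1)$-moonflower. We repair this by choosing the maximal moonflower more carefully, and this repair is the main step.

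Define the private-element set $P := \{y_1,\ldots,y_m\}$, with one chosen private element per petal. Among all maximum moonflowers, pick the configuration that is minimal in a suitable sense. Two natural options:

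\begin{itemize}
\item Run a greedy construction. Repeatedly pick a set containing an element not in the current union, and record that new element as its private witness. Previously chosen sets keep their witnesses as long as the later sets avoid them.
\item Order the sets so that each new set contains an element outside the union of all previous sets. This is a "chain of new elements" argument.
\end{itemize}

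The greedy order alone does not make the earlier witnesses private with respect to later sets. The cleaner route is to bound the support via a hitting-set/duality style argument. Consider the bipartite graph in which each element $x \in \supp(\calF)$ is attached to the sets containing it, and take a maximal induced matching (that is, a moonflower) $M$ of size $m \le k-1$.

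The fact we then need is the following. Every element $x$ is either covered by the sets of $M$, or, after we swap a petal, yields a larger induced matching. Concretely, if $x \notin U$ and $T \ni x$, then for each $i$ with $y_i \in T$ we try replacing $y_i$ by another element of $S_i$ that is private. If none exists, we argue that replacing $S_i$ by $T$ keeps a moonflower of size $m$ whose union contains $x$ and is lexicographically larger, and we terminate by a potential argument.

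I expect this exchange/potential step to be the main obstacle. As a fallback, I would prove the weaker but sufficient statement by induction on $k$. Fix an element $x$ and a set $T \ni x$. Apply induction to the family of sets in $\calF$ that avoid $T$, traced appropriately by \Cref{lem:WSF_free_preserved_under_projection} to $\overline{T}$; that traced family is $(k-1)$-moonflower-free. This gives $|\supp| \le w + (k-2)w$, which is exactly the claimed $(k-1)w$.
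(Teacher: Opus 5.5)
\textbf{There is a genuine gap: neither of your two routes is complete.}

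\textbf{Main route.} The maximum-moonflower argument stops at the exchange/potential step, which you flag yourself. The reduction ``it suffices to show $\supp(\calF)\subseteq U$'' is false for an arbitrary maximum moonflower. Take $k=2$ and the chain $\calF=\{\{1\},\{1,2\}\}$, which is $2$-moonflower-free. The single set $\{1\}$ is a maximum moonflower, but its union misses the element $2$. So a specific choice of moonflower is essential, and you never pin one down.

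\textbf{Fallback induction.} This one contains an actual error. For an arbitrary $T\ni x$, the traced family $\{S\setminus T: S\in\calF\}$ need not be $(k-1)$-moonflower-free. Lifting a $(k-1)$-moonflower $S_1\setminus T,\dots,S_{k-1}\setminus T$ gives sets $S_1,\dots,S_{k-1}$ whose private elements lie outside $T$. However, $T$ gains a private element only if $T\not\subseteq S_1\cup\dots\cup S_{k-1}$, and nothing guarantees that. In the chain example with $T=\{1\}$, the trace contains $\{2\}$, which is a $1$-moonflower. Your recursion would then give $|\supp(\calF)|\le |T|+0=1$, which is false.

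If you instead meant the sets of $\calF$ disjoint from $T$, that subfamily is indeed $(k-1)$-moonflower-free, since $T$ can be added as a $k$-th petal. But $T$ together with that subfamily's support misses every element of a set that meets $T$ without lying inside it; in the example, this is the element $2$ again.

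\textbf{The missing idea: prune rather than build.} Take $\calF'\subseteq\calF$ inclusion-minimal subject to $\bigcup_{S\in\calF'}S=\supp(\calF)$. Minimality forces each $S\in\calF'$ to contain an element lying in no other member of $\calF'$, since otherwise $S$ could be dropped. So $\calF'$ is itself a $|\calF'|$-moonflower. Hence $|\calF'|\le k-1$, and $|\supp(\calF)|\le (k-1)w$. This is exactly the paper's proof.

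Your maximality idea can also be rescued if you maximize the \emph{union} rather than the number of petals. Suppose a moonflower $M$ has union $U$, and some $x\notin U$ lies in a set $T$. Then a minimal subfamily of $M\cup\{T\}$ covering $U\cup T$ is again a moonflower, with strictly larger union. Your greedy bullet failed precisely because a forward greedy order cannot protect earlier witnesses. Passing to a minimal subcover avoids that problem entirely.
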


\begin{proof}
If $\mathcal F=\emptyset$ then $\supp(\mathcal F)=\emptyset$ and we are done.  Let $U:=\supp(\mathcal F)$.
Pick a subfamily $\mathcal F'\subseteq \mathcal F$ that is minimal (under inclusion) subject to
$\bigcup_{S\in\mathcal F'} S = U$.
Then for every $S\in\mathcal F'$ there exists an element
\[
x_S \in S\setminus \bigcup_{T\in\mathcal F',\,T\neq S} T,
\]
otherwise $S$ could be removed while still covering $U$, contradicting minimality.
In particular, the elements $\{x_S: S\in\mathcal F'\}$ are pairwise distinct.

Let $I := [n]\setminus \{x_S : S\in\mathcal F'\}$.  For each $S\in\mathcal F'$ we have
$S\setminus I = \{x_S\}$, so the sets in $\mathcal F'$ form a $|\mathcal F'|$-moonflower with core $I$.
Since $\mathcal F$ is $k$-moonflower-free, we get $|\mathcal F'|\le k-1$. Therefore
\[
|U|=\left|\bigcup_{S\in\mathcal F'} S\right|\le \sum_{S\in\mathcal F'} |S|
\le |\mathcal F'|\cdot w \le (k-1)w,
\]
as required.
\end{proof}

\subsection{Codes}

We define a Boolean code to be an arbitrary $\calC \subseteq \{0,1\}^n$. We say $\calC$ is \emph{non-trivial} if $\calC \neq \emptyset, \{0, 1\}^n$. For any $c \in \calC$, denote $\supp (c) \subseteq [n]$ to be the set of nonzero coordinates. Define $\supp (\calC) := \bigcup_{c \in C}\supp (c)$. For any codeword $c \in \calC$, we define its \emph{Hamming weight} as $| c |:= c_1 + ... + c_n$, i.e., the number of nonzero coordinates. For any $d \in [n]$, let $\calC_{\leq d}$ be the set of codewords in $\calC$ with Hamming weight at most $d$.

Given $S \subseteq [n]$ and $c \in \{0,1\}^n$, we define $c|_S \in \{0, 1\}^S$ to be the list $(c_i: i \in S)$. Likewise, we define \emph{punctured} code $\calC|_{S}:= \{c|_{S}: c \in \calC\} \subseteq \{0, 1\}^S$.

\begin{definition}[Non-redundancy \cite{BG25}]\label{definition:non-redundancy}
    A subset $I \subseteq [n]$ is \emph{non-redundant} for a code $\calC \subseteq \{0, 1\}^n$ if for each $i \in I$, there exists $c \in \calC$ such that $c_i=1$ and $c_j=0$ for all $j \in I \setminus \{i\}$.
    We define the \emph{non-redundancy} of $\calC$, denoted by $\NRD(\calC)$, to be the size of the largest non-redundant set that is non-redundant for $\calC$.
\end{definition}
If we view a code $\calC$ as a $|\calC| \times n$ matrix, then it is clear from the definition that $\NRD(\calC)$ is the dimension of the largest permutation submatrix contained in $\calC$. When $\calC$ is a linear code, $\NRD(\calC)$ equals the dimension of $\calC$. In addition, for any non-trivial $\calC$, we have $1 \leq \NRD(\calC) \leq n$. Let $\calF_{\calC}:= \{\supp(c): c\in \calC\}\subseteq 2^{[n]}$. We have the following simple lemma establishing a connection between $\NRD (\calC)$ and $\MF(\calF_{\calC})$.

\begin{lemma}[$\NRD$ equals $\MF$ of the support family]\label{lem:equivalence_between_NRD_and_WSF}
Let $\calC\subseteq\{0,1\}^n$ be a binary code. Then
\[
\MF(\mathcal F_{\calC})=\NRD(\calC).
\]
\end{lemma}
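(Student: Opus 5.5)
We prove the two inequalities $\NRD(\calC)\le \MF(\calF_{\calC})$ and $\MF(\calF_{\calC})\le \NRD(\calC)$ separately. Each one is a direct translation between a non-redundant coordinate set and a moonflower in the support family. The only care needed is that $\calF_{\calC}$ is a set family, so petals must be distinct sets.

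\emph{Non-redundancy gives a moonflower.} Let $I=\{i_1,\ldots,i_m\}$ be non-redundant for $\calC$.
\begin{itemize}
\item For each $j$, pick a codeword $c^{(j)}\in\calC$ with $c^{(j)}_{i_j}=1$ and $c^{(j)}_{i_\ell}=0$ for $\ell\neq j$.
\item Set $S_j=\supp(c^{(j)})\in\calF_{\calC}$. Then $S_j\cap I=\{i_j\}$.
\item These supports are pairwise distinct, since their intersections with $I$ differ.
\item Take the core to be $[n]\setminus I$. Then $S_j\setminus([n]\setminus I)=S_j\cap I=\{i_j\}$.
\item These singletons are nonempty and pairwise disjoint, so $S_1,\ldots,S_m$ is an $m$-moonflower. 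Hence $\MF(\calF_{\calC})\ge m$.
\end{itemize}

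\emph{A moonflower gives non-redundancy.} Let $S_1,\ldots,S_m\in\calF_{\calC}$ form an $m$-moonflower with core $J$, so the sets $S_j\setminus J$ are nonempty and pairwise disjoint.
\begin{itemize}
\item For each $j$, choose $i_j\in S_j\setminus J$. By disjointness, $i_j$ lies in no other $S_\ell$.
\item Write $S_j=\supp(c^{(j)})$ for some $c^{(j)}\in\calC$.
\item Then $c^{(j)}_{i_j}=1$, and $c^{(j)}_{i_\ell}=0$ for every $\ell\ne j$, since $i_\ell\notin S_j$.
\item The $i_j$ are distinct, because $i_j\notin S_\ell$ while $i_\ell\in S_\ell$. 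So $I=\{i_1,\ldots,i_m\}$ has size $m$ and is non-redundant. Hence $\NRD(\calC)\ge m$.
\end{itemize}

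There is no real obstacle here. The only bookkeeping point is distinctness: of the supports in the first direction, and of the chosen coordinates in the second. Both follow immediately from the separating coordinates. The degenerate cases, such as $m=0$ or an empty code, are consistent with both definitions.
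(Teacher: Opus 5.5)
Your proof is correct and follows essentially the same argument as the paper: one direction takes the complement of a non-redundant set as the core, so the petals become the singletons $\{i_j\}$; the other picks one element from each petal and checks that each support meets the resulting set in exactly one point. Your extra remarks on the distinctness of the supports and of the chosen coordinates are valid bookkeeping that the paper leaves implicit (distinctness of the sets is in fact automatic from the nonempty, pairwise-disjoint petal condition).
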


\begin{proof}
We prove the two inequalities.

\smallskip
\noindent\textbf{(1) $\MF(\mathcal F_{\calC})\ge \NRD(\calC)$.}
Let $I\subseteq[n]$ be non-redundant for $\calC$ with $|I|=\NRD(\calC)$. By Definition~2.11, for every $i\in I$
there exists a codeword $c^{(i)}\in \calC$ such that for all $i'\in I$,
\[
c^{(i)}_{i'}=1 \iff i'=i,
\]
equivalently, $\supp(c^{(i)})\cap I=\{i\}$.
Let $S_i:=\supp(c^{(i)})\in\mathcal F_{\calC}$ and set the core to be $J:=[n]\setminus I$.
Then for each $i\in I$,
\[
S_i\setminus J = S_i\cap I = \{i\},
\]
so the sets $\{S_i:i\in I\}$ form a $|I|$-moonflower (their petals are the singletons $\{i\}$, hence
nonempty and pairwise disjoint). Therefore $\MF(\mathcal F_{\calC})\ge |I|=\NRD(\calC)$.

\smallskip
\noindent\textbf{(2) $\NRD(\calC)\ge \MF(\mathcal F_{\calC})$.}
Let $k:=\MF(\mathcal F_{\calC})$, so $\mathcal F_{\calC}$ contains a $k$-moonflower
$S_1,\dots,S_k$ with some core $J\subseteq[n]$; write $P_t:=S_t\setminus J$ for the petals.
By definition, each $P_t$ is nonempty and the $P_t$'s are pairwise disjoint.
Choose an element $i_t\in P_t$ for each $t\in[k]$ and let $I:=\{i_1,\dots,i_k\}$.
For each $t$, let $c^{(t)}\in \calC$ be a codeword with $\supp(c^{(t)})=S_t$.
We claim that $\supp(c^{(t)})\cap I=\{i_t\}$.
Indeed, $i_t\in S_t$ by construction. If $s\neq t$, then $i_s\notin J$ (since $i_s\in P_s$) and
also $i_s\notin S_t\setminus J=P_t$ (since the petals are disjoint), hence $i_s\notin S_t$.
Thus $S_t\cap I=\{i_t\}$, meaning that on the coordinate set $I$, the codeword $c^{(t)}$ has a single
$1$ exactly at $i_t$.

Therefore, for every $i_t\in I$ there exists $c^{(t)}\in \calC$ whose restriction to $I$ equals the unit vector
at $i_t$. This is exactly the non-redundancy condition of Definition~2.11, so $I$ is non-redundant for $\calC$.
Hence $\NRD(\calC)\ge |I|=k=\MF(\mathcal F_{\calC})$.

Combining (1) and (2) yields $\MF(\mathcal F_{\calC})=\NRD(\calC)$.
\end{proof}

\subsection{Entropy amplification}

\begin{definition}[$p$-smooth distribution]
     A probability distribution $D$ supported on $2^{[n]}$ is \emph{$p$-smooth} if for every coordinate $i \in [n]$,
    \[
    \Pr_{S \sim D}[i \in S] \leq p.
    \]
\end{definition}

\begin{lemma}[Gilmer's entropy lemma {\cite[Corollary 4.9]{BG25}}]\label{lemma:entropy_lemma} Let $D$ be a $p$-smooth distribution over $2^{[n]}$. Let $t$ be a power of two such that $1 - (1-p)^{t/2} \leq \frac{3-\sqrt{5}}{2}$ and $A_1,\ldots,A_t$ be i.i.d samples from $\calD$. Then
\[
H \left(\bigvee_{i=1}^t A_i \right) \geq H(D) \cdot \frac{h(1 - (1-p)^t)}{h(p)}.
\]    
Here $h(\cdot)$ is the binary entropy function.
\end{lemma}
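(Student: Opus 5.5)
The approach is to reduce the statement to a single \emph{doubling step} and then iterate it $\log t$ times. For $m$ a power of two, let $D_m$ denote the distribution of $A_1\cup\cdots\cup A_m$ with $A_j$ i.i.d.\ from $D$. Since $\Pr[i\in A_1\cup\cdots\cup A_m]=1-(1-\Pr_D[i\in S])^m$, the distribution $D_m$ is $q_m$-smooth with $q_m:=1-(1-p)^m$. Note that $1-q_{2m}=(1-q_m)^2$, so $q_{2m}=1-(1-q_m)^2$.

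The doubling step I aim to prove is the following. \emph{If $D'$ is $q$-smooth with $q\le \frac{3-\sqrt5}{2}$ and $A,B\sim D'$ are independent, then}
\[
H(A\cup B)\ \ge\ H(A)\cdot\frac{h(1-(1-q)^2)}{h(q)}.
\]
I apply it to $D'=D_m$ with $q=q_m$ for $m=1,2,\ldots,t/2$. The hypothesis $q_{t/2}\le\frac{3-\sqrt5}{2}$ covers every stage, since $q_m$ is increasing in $m$. The resulting factors $h(q_{2m})/h(q_m)$ telescope to $h(q_t)/h(p)=h(1-(1-p)^t)/h(p)$, which is the claimed bound.

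For the doubling step I would use the chain rule coordinatewise:
\[
H(A\cup B)\ \ge\ H(A\cup B\mid A,B\text{ restricted to }[i-1]\text{ for each }i)\ =\ \sum_i H\big((A\cup B)_i \,\big|\, A_{<i},B_{<i}\big).
\]
Conditioned on the prefixes $A_{<i}=x$ and $B_{<i}=y$, the bits $A_i$ and $B_i$ are independent, equal to $1$ with probabilities $a=a_i(x)$ and $b=b_i(y)$, and their OR has entropy $h(1-(1-a)(1-b))$. On the other side, $H(A)=\sum_i \E_x[h(a_i(x))]$, and the smoothness constraint reads $\E[a_i]\le q$. So it suffices to prove, for each coordinate $i$ and independent random $a,b\in[0,1]$ with the same law satisfying $\E a\le q$,
\[
\E\, h\big(1-(1-a)(1-b)\big)\ \ge\ \frac{h(1-(1-q)^2)}{h(q)}\,\E\, h(a).
\]

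This two-variable inequality is the main obstacle. My first attempt would go through the complements $X=1-a$ and $Y=1-b$. I would look for a Boppana/Sawin-type pointwise bound of the form
\[
h(XY)\ \ge\ c(\cdot)\,\big(Y\,h(X)+X\,h(Y)\big),
\]
then take expectations using independence, which gives $\E[Y]\,\E[h(X)]+\E[X]\,\E[h(Y)]$ with $\E X,\E Y\ge 1-q$. To get the sharp constant $h(1-(1-q)^2)/h(q)$ rather than Boppana's $\varphi$, I expect to need a careful extremal argument. The plan is to fix $\E a$ and $\E h(a)$ and argue by a convexity/concavity analysis that the worst case is supported on $\{0,q'\}$ for some $q'$. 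At such two-point distributions the inequality becomes a one-dimensional check that reduces to equality at $a\equiv q$. The threshold $\frac{3-\sqrt5}{2}$ is exactly where this one-dimensional comparison remains valid, and I would verify it by differentiating in $q'$.
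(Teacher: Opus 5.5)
\paragraph{Gap: the one-coordinate inequality.} The paper gives no proof of this lemma; it imports it as a black box from \cite{BG25} (Corollary 4.9). Your bookkeeping around it is correct. $D_m$ is $q_m$-smooth, $q_{2m}=1-(1-q_m)^2$, and the hypothesis on $q_{t/2}$ covers every stage. The factors telescope to $h(1-(1-p)^t)/h(p)$. Conditioning on $(A_{<i},B_{<i})$ in the chain rule validly reduces the doubling step to the one-coordinate inequality
\[
\mathbb{E}\,h\bigl(1-(1-a)(1-b)\bigr)\ \ge\ r(q)\,\mathbb{E}\,h(a),\qquad r(q):=\frac{h(1-(1-q)^2)}{h(q)},
\]
for i.i.d.\ $a,b\in[0,1]$ with $\mathbb{E}a\le q$. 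However, your doubling step is exactly the case $t=2$ of the statement, so all the nontrivial content sits in this inequality. You leave it as a plan, which is a genuine gap, and the plan as sketched would not close it.

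\paragraph{Why the sketched plan fails.}
(i) A pointwise bound $h(xy)\ge c\,(y\,h(x)+x\,h(y))$ with constant $c$ forces $2c\le\varphi$, since Boppana's $h(x^2)\ge\varphi x h(x)$ is tight at $x=1/\varphi$. So it certifies at most $\varphi(1-q)$ per step, which is strictly below $r(q)$ for $q<\frac{3-\sqrt5}{2}$ (e.g.\ $r(q)\to 2$ as $q\to 0$). Iterated, this gives at most $t^{\log_2\varphi}$ instead of the $\Theta(t/\log t)$ the paper relies on.

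(ii) Because $a,b$ are i.i.d., the left side is a quadratic functional of the law of $a$. It is not a function of $\mathbb{E}a$ and $\mathbb{E}h(a)$, so fixing those two moments and passing to extreme points is not a valid reduction.

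(iii) The family $\{0,q'\}$ is not where the difficulty lies. Take $\Pr[a=s]=q/s$, $\Pr[a=0]=1-q/s$ with $s\in[q,1)$. The ratio equals $2-q\,(2-r(s))/s$, and $(2-r(s))/s$ is decreasing. So in this family the constant law is worst for every $q$, and no threshold appears. The binding direction is mass at $a=1$. Take $a=1$ with probability $\epsilon$ and $a=\frac{q-\epsilon}{1-\epsilon}$ otherwise. The ratio is $(1-\epsilon)\,r\bigl(\frac{q-\epsilon}{1-\epsilon}\bigr)$, with derivative $-r(q)-(1-q)r'(q)$ at $\epsilon=0$. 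This derivative vanishes at $q=\frac{3-\sqrt5}{2}$ (where $(1-q)^2=q$) and is negative just beyond it. That is the true source of the threshold.

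\paragraph{What would close the gap.} You would need a pointwise inequality in $x=1-a$, $y=1-b$ that is tight at $x=y=1-q$. It should carry a correction term whose expectation is nonnegative whenever $\mathbb{E}X\ge 1-q$. For instance, something of the shape
\[
h(xy)-\frac{r(q)}{2}\bigl(h(x)+h(y)\bigr)\ \ge\ \frac{\lambda}{2}\bigl[(x-1+q)\,h(y)+(y-1+q)\,h(x)\bigr],
\]
with $\lambda=\lambda(q)\ge 0$ fixed by tangency at $x=y=1-q$, verified on all of $[0,1]^2$ for every $q\le\frac{3-\sqrt5}{2}$. Averaging this over independent $X,Y$ gives exactly $\mathbb{E}h(XY)-r(q)\,\mathbb{E}h(X)\ge\lambda(\mathbb{E}X-1+q)\,\mathbb{E}h(X)\ge 0$. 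Without such an inequality, or a citation of the sharp one-step bound as the paper effectively does via \cite{BG25}, the argument is incomplete.
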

Note that if $p = \Theta (1 / t)$, then $h(1- (1-p)^t) / h(p) = \Theta( t / \log t )$.

\begin{lemma}[Entropy bound from size of $r$-fold unions, variant of {\cite[Corollary 4.13]{BG25}}]\label{lem:entropy_bound_from_size_of_r_fold_unions}
    Let $\calF \subseteq 2^{[n]}$ be any finite family. For $p \in (0, 1)$, let $D$ be a $p$-smooth distribution supported on $\calF$. If $r \geq 2/p$, then
    \[
    H(D) \leq O ( p \log (1/p) \cdot \log |\calU_r (\calF)|),
    \]
    where $H(\cdot)$ denotes the Shannon entropy.
\end{lemma}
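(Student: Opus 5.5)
The plan is to combine Gilmer's entropy lemma (\Cref{lemma:entropy_lemma}) with the trivial upper bound on the entropy of a random element of $\calU_r(\calF)$. Gilmer's lemma says that the union of $t$ i.i.d.\ samples from a $p$-smooth $D$ has entropy at least $H(D)\cdot h(1-(1-p)^t)/h(p)$. The union of $t \le r$ sets from $\calF$ lies in $\calU_t(\calF)\subseteq \calU_r(\calF)$; the inclusion holds because one may repeat a set $r-t$ more times. Hence that entropy is at most $\log|\calU_r(\calF)|$. Rearranging gives
\[
H(D)\le \frac{h(p)}{h(1-(1-p)^t)}\,\log|\calU_r(\calF)|.
\]
It then suffices to choose $t$ so that $h(1-(1-p)^t)=\Omega(1)$, and to use $h(p)=O(p\log(1/p))$.

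\textbf{Step 1 (choice of $t$).} First assume $p\le p_0$ for a small absolute constant $p_0$. Let $t$ be the largest power of two with $t\le c/p$, for a small constant $c$ (say $c=1/2$), so that $t\ge c/(2p)$. Then $t\le 1/(2p)\le r$, using the hypothesis $r\ge 2/p$. Next check Gilmer's hypothesis:
\[
1-(1-p)^{t/2}\le pt/2\le c/2 \le \tfrac{3-\sqrt5}{2}\approx 0.38,
\]
which holds for $c=1/2$. Also $q:=1-(1-p)^t$ lies in $[\,1-e^{-c/2},\, c\,]$, i.e.\ in a fixed interval bounded away from $0$ and $1$. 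Therefore $h(q)=\Omega(1)$.

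\textbf{Step 2 (conclude).} Let $A_1,\dots,A_t$ be i.i.d.\ samples from $D$. Each $A_i$ lies in $\calF$, since $D$ is supported on $\calF$, so $\bigvee_i A_i\in\calU_t(\calF)\subseteq\calU_r(\calF)$. This gives $H(\bigvee_i A_i)\le\log|\calU_r(\calF)|$. Combining with \Cref{lemma:entropy_lemma} and with $h(p)\le 2p\log(1/p)$ for $p\le 1/2$ yields $H(D)\le O(p\log(1/p))\log|\calU_r(\calF)|$.

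\textbf{Main obstacle: large $p$.} The delicate point is the regime where $p$ is not small. For $p\ge p_0$ the expression $p\log(1/p)$ is at least a constant only when $p$ is bounded away from $1$; in that case the bound follows trivially from $H(D)\le\log|\calF|\le\log|\calU_r(\calF)|$, since $\calF\subseteq\calU_r(\calF)$. If $p$ approaches $1$, the right-hand side as literally written degenerates. I would handle this either by reading $p\log(1/p)$ as $h(p)$ throughout, or by restricting to $p\le 1/2$, which is the only regime used later where $p=\Theta(1/t)$ is small. Apart from this, the only care needed is the rounding of $t$ to a power of two, and Step 1 takes care of it with constant-factor slack.
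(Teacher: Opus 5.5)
Your argument is correct and is the intended derivation; the paper gives no proof of its own and defers to \cite[Corollary~4.13]{BG25}. You apply Gilmer's entropy lemma with $t$ a power of two of order $1/p$, so that $h(1-(1-p)^t)=\Omega(1)$ and $t\le r$, and you bound the entropy of the union by $\log|\calU_r(\calF)|$. You are right that the bound fails as $p\to 1$ (e.g.\ $\calF$ the singletons of $[N]$ with $r=3$); reading $p\log(1/p)$ as $h(p)$ does not repair this, since $h(p)\to 0$ as well, so restricting to $p\le 1/2$, the only range the paper uses, is the correct fix.
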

For a proof of the statement, see  \cite[Corollary 4.13]{BG25}.

\subsection{Chernoff bound}
We need the following version of Chernoff bound for our sparsification result.
\begin{lemma}[Chernoff bound, convenient form]\label{lem:sparsif_chernoff_v2}
Let $t\ge 1$ and let $X\sim \mathrm{Bin}(t,1/2)$. Then for every $\Delta>0$,
\[
\Pr\left[\,|2X-t|>\Delta\,\right]\ \le\ 2\exp\!\left(-\frac{\Delta^2}{3t}\right).
\]
\end{lemma}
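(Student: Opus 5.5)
The plan is to write $2X-t$ as a sum of independent Rademacher variables and apply the standard exponential-moment (Hoeffding) argument. The argument yields the stronger bound $2\exp(-\Delta^2/(2t))$, which implies the stated one since $\Delta^2/(3t)\le \Delta^2/(2t)$.

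\textbf{Step 1 (rewriting).} Write $X=\sum_{j=1}^t B_j$ with $B_j$ i.i.d.\ Bernoulli$(1/2)$. Then
\[
2X-t=\sum_{j=1}^t \sigma_j, \qquad \sigma_j:=2B_j-1,
\]
where the $\sigma_j$ are i.i.d.\ and uniform on $\{\pm1\}$. By symmetry of $\sigma_j$, the two tails are equal:
\[
\Pr[|2X-t|>\Delta]=2\Pr\Bigl[\textstyle\sum_j \sigma_j>\Delta\Bigr].
\]
So it suffices to show $\Pr[\sum_j\sigma_j>\Delta]\le \exp(-\Delta^2/(2t))$.

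\textbf{Step 2 (moment generating function).} For any $\lambda>0$ we have $\E[e^{\lambda\sigma_j}]=\cosh\lambda$. Comparing Taylor series termwise, using $(2m)!\ge 2^m m!$, gives $\cosh\lambda\le e^{\lambda^2/2}$. By independence,
\[
\E\bigl[e^{\lambda\sum_j\sigma_j}\bigr]\le e^{t\lambda^2/2}.
\]
Markov's inequality then gives
\[
\Pr\Bigl[\textstyle\sum_j\sigma_j>\Delta\Bigr]\le \exp\bigl(-\lambda\Delta+t\lambda^2/2\bigr).
\]

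\textbf{Step 3 (optimizing).} Choosing $\lambda=\Delta/t$ gives the tail bound $\exp(-\Delta^2/(2t))$. Combining this with Step 1 yields
\[
\Pr[|2X-t|>\Delta]\le 2\exp\!\left(-\frac{\Delta^2}{2t}\right)\le 2\exp\!\left(-\frac{\Delta^2}{3t}\right).
\]

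There is no real obstacle here. The only point needing any care is the inequality $\cosh\lambda\le e^{\lambda^2/2}$. If one prefers to avoid it, one can instead cite the multiplicative Chernoff bound $\Pr[|X-\mu|>\delta\mu]\le 2e^{-\delta^2\mu/3}$ for $\delta\le1$, with $\mu=t/2$ and $\delta=\Delta/t$; this gives exactly $2\exp(-\Delta^2/(6t))$ times a constant in the exponent that is off by a factor of two. So the Hoeffding route above is the one to take.
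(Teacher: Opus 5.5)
Your proof is correct and takes the same route as the paper, which simply cites the standard additive (Hoeffding-type) Chernoff bound without proof; your moment-generating-function argument via $\cosh\lambda\le e^{\lambda^2/2}$ with $\lambda=\Delta/t$ proves that bound in the stronger form $2\exp(-\Delta^2/(2t))$, which implies the stated one. Your side remark is also right: the multiplicative form with $\mu=t/2$ would only give $2\exp(-\Delta^2/(6t))$, so the additive route is the one that actually delivers the constant $3$.
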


\begin{proof}
Standard Chernoff bound in additive form.
\end{proof}

\ignore{
\begin{lemma}[Lemma 3.3 of \cite{BG25}]\label{lem:bound_sparse_codewords_with_NRD}
    For all $C \subseteq \{0, 1\}^m$ and all $d \in \{0,1,...,m\}$, we have 
    \[
    |\supp (C_{\leq d})| \leq d \cdot \NRD (C).
    \]
\end{lemma}
\begin{proof}
    See proof of Lemma 3.3 in \cite{BG25}.
\end{proof}

We now define what it means to sparsify a code. Given a weight function $w: [n] \to \rr_{\geq 0}$, the $w$-weight of a codeword $c \in C \subseteq \{0,1\}^n$ is defined as
\[
\langle w, c \rangle := \sum_{i=1}^n w(i)c_i.
\]

\begin{definition}[$\epsilon$-sparsifier]
    For $\epsilon \in (0, 1)$, we say that $w: [n] \to \rr_{\geq 0}$ is a $\epsilon$-sparsifier of $C \subseteq \{0, 1\}^m$ if for all $c \in C$, we have
    \[
    (1-\epsilon) \Ham (c) \leq \langle w, c \rangle \leq (1+\epsilon) \Ham (c).
    \]
\end{definition}
Define the $\epsilon$-sparsifier of $C$, denoted by $\SPR(C, \epsilon)$ to be the minimum support size (i.e., number of nonzero coordinates) of any $\epsilon$-sparsifier of $C$.}
\ignore{
\subsection{Chernoff bounds}
We need the following versions of Chernoff bounds for our results. 

\begin{lemma}[Chernoff bound e.g. \cite{MU05}, \cite{BLM13}]\label{lem:chernoff_standard}
    Let $X_1,...,X_n$ be i.i.d. samples of the Bernoulli distribution with probability $p \in [0, 1]$. Then, for all $\delta > 0$,
    \begin{equation}\label{eqn:chernoff_smaller}
        \Pr \left[\sum_{i=1}^n X_i < (1-\delta)pn \right] \leq \exp (-\delta^2 np/2)
    \end{equation}
    \begin{equation}\label{eqn:chernoff_upper}
        \Pr \left[\sum_{i=1}^n X_i > (1+\delta)pn \right] \leq \exp (-\delta^2 np / (2+\delta)).
    \end{equation}
\end{lemma}

We obtain the following corollary.

\begin{corollary}\label{cor:chernoff_size_not_big}
    Let $S \subseteq [n]$ be a random subset of $[n]$ such that each element is included independently with probability at most $1/3$. Then,
    \[
    \Pr [|S| > n/2] \leq \exp (-n/30).
    \]
\end{corollary}
\begin{proof}
    Apply (\ref{eqn:chernoff_upper}) with $p = 1/3$ and $\delta = 1/2$.
\end{proof}}

\ignore{
Lastly, we need the following lemma which shows random sampling preserves the weight of a codeword with high probability.

\begin{lemma}\label{lem:chernoff_random_samp_weight}
    Let $S \cup T$ be a partition of $[n]$. For $p \in (0, 1]$, let $S_p$ be a random subset of $S$ where each element of $S$ is included independently with probability $p$. For any codeword $c \in \{0, 1\}^n$ and any $\epsilon \in (0,1 )$, we have that
    \[
    \Pr_{S_p} \left[\frac{1}{p}\text{Ham}(c|_{S_p}) + \text{Ham}(c|_T) \notin [1-\epsilon, 1+\epsilon] \cdot \text{Ham}(c) \right] < 2 \exp (-\epsilon^2 \text{Ham}(c) p / 3).
    \]
\end{lemma}
\begin{proof}
    Let $w_S = \text{Ham}(c|_S)$ and $w_T= \text{Ham}(c|_T)$, so $\text{Ham}(c) = w_S + w_T$. Notice $\text{Ham}(w_{S_p})$ is the summation of $n$ independent Bernoulli random variables. Applying Lemma \ref{lem:chernoff_standard} with $n: = w_S$ and $\delta := \epsilon \cdot \frac{w_S + w_T}{w_S}$, we have
    \[
    \Pr_{S_p} \left[\text{Ham}(c|_{S_p}) \notin [1-\delta, 1+\delta] \cdot pn \right] < 2\exp (-\epsilon^2 \text{Ham}(c) p / 3).
    \]
    Lastly, notice
    \[
    \begin{split}
        \text{Ham}(c_{S_p}) \notin [1-\delta, 1+\delta]pn &\Leftrightarrow \frac{1}{p} \text{Ham}(c|_{S_p}) + \text{Ham}(c|_T) \notin [1-\delta, 1+\delta] \cdot w_S + w_T \\
        &\Leftrightarrow  \frac{1}{p} \text{Ham}(c|_{S_p}) + \text{Ham}(c|_T) \notin [1-\epsilon, 1+\epsilon] \cdot \text{Ham}(c),
    \end{split}
    \]
    as desired.
\end{proof}}

%% file: moonflower_bound.tex
In this section, we prove our extremal moonflower bound. 
\begin{theorem}[\Cref{main_wsf_theorem}, restated]\label{thm:moonflower_bnd}
    Let $\calF$ be a family of $w$-sets which is $k$-moonflower-free. Then $|\calF| \leq \binom{k+w-1}{w}$.
\end{theorem}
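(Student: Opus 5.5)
The plan is to reduce to Frankl's skew version of the Bollobás set-pair theorem, quoted in the proof overview and proved in \Cref{sec:appendix}. That theorem says: if $A_1,\ldots,A_m$ are $r$-sets and $B_1,\ldots,B_m$ are $s$-sets with $A_i\cap B_i=\emptyset$ for all $i$ and $A_i\cap B_j\neq\emptyset$ for all $i<j$, then $m\le\binom{r+s}{s}$. We apply it with $r=w$, $s=k-1$ and $A_i$ the members of $\calF$, which gives exactly $\binom{k+w-1}{w}$.

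\textbf{Ordering and padding.} List $\calF=\{S_1,\ldots,S_m\}$ in non-increasing order of size, so $|S_1|\ge|S_2|\ge\cdots\ge|S_m|$. Since the sets are distinct, for $i<j$ we cannot have $S_i\subseteq S_j$. Hence every set $T_{i}^{(j)}:=S_i\setminus S_j$ with $i<j$ is nonempty.

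Sizes will be handled by padding with fresh elements. Enlarge each $S_i$ to exactly $w$ elements using new elements private to $A_i$, and each $B_j$ to exactly $k-1$ elements using new elements private to $B_j$. This keeps every disjointness $A_i\cap B_i=\emptyset$, and intersections only grow. So it suffices to build $B_j$ of size \emph{at most} $k-1$ with $B_j\cap S_j=\emptyset$ and $B_j\cap S_i\neq\emptyset$ for all $i<j$.

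\textbf{Key step: constructing $B_j$.} Fix $j$. We need a hitting set for the family $\{T_i^{(j)}: i<j\}$, all of whose members are nonempty subsets of $[n]\setminus S_j$. Let $B_j\subseteq[n]\setminus S_j$ be an inclusion-minimal hitting set; one exists because the union of the $T_i^{(j)}$ is itself a hitting set. (For $j=1$, take $B_1=\emptyset$.)

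By minimality, for each $b\in B_j$ the set $B_j\setminus\{b\}$ misses some $T_{i_b}^{(j)}$. Therefore $T_{i_b}^{(j)}\cap B_j=\{b\}$. Since $B_j\cap S_j=\emptyset$, this gives $S_{i_b}\cap B_j=\{b\}$.

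Now take the core $I:=[n]\setminus B_j$. The sets $S_{i_b}\setminus I=\{b\}$ for $b\in B_j$ are nonempty, pairwise disjoint singletons. In particular the $S_{i_b}$ are pairwise distinct, as their traces on $B_j$ differ. So $\{S_{i_b}: b\in B_j\}$ is a $|B_j|$-moonflower in $\calF$. Since $\calF$ is $k$-moonflower-free, $|B_j|\le k-1$.

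By construction $B_j\cap S_j=\emptyset$, and $B_j$ meets $T_i^{(j)}\subseteq S_i$ for every $i<j$, so the skew cross-intersection conditions hold. Frankl's theorem then yields $m\le\binom{w+k-1}{k-1}=\binom{k+w-1}{w}$.

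\textbf{Main obstacle.} The one point needing care is the choice of ordering. The theorem needs $B_j$ disjoint from $S_j$ yet meeting every earlier $S_i$, which requires $S_i\not\subseteq S_j$ for $i<j$. Ordering by non-increasing size secures this. After that, the minimal-hitting-set argument supplies the moonflower directly, and all remaining steps are routine checks.
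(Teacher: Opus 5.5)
Your proof is correct and is essentially the paper's argument: the reduction to Frankl's skew theorem uses the members of $\calF$ sorted by size on one side, and inclusion-minimal hitting sets on the other, with minimality giving a moonflower that caps each hitting set at $k-1$ elements. The only difference is cosmetic. The paper lists $\calF$ in non-decreasing order as the $\calB$-side and takes hitting sets of the \emph{later} members as the $(k-1)$-sets in $\calA$. Your version swaps the roles of $\calA$ and $\calB$ and reverses the order, and both Frankl's skew condition and the bound $\binom{r+s}{s}$ are invariant under that change.
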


Towards the proof of this theorem, we need the following result of Frankl \cite{Frankl1982ExtremalProblem}.
\begin{theorem}[\cite{Frankl1982ExtremalProblem}]\label{thm:frankl}
    Let $\calA = \{A_1,\ldots,A_m\}$ be a family of $r$-sets and $\calB = \{B_1,\ldots,B_m\}$ be a family of $s$-sets such that (1) $A_i \cap B_i = \emptyset$ for $i = 1,2,\ldots,m$ and (2)  $A_i \cap B_j \neq \emptyset$ for $1 \leq i < j \leq m$. Then 
    \[
    m \leq \binom{r+s}{s}.
    \]
\end{theorem}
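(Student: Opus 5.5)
The plan is to use Lovász's exterior-algebra (general position) argument, which handles the skew condition (2) directly.

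\textbf{Reduction to uniform sizes.} First I would reduce to the case where every $A_i$ has exactly $r$ elements and every $B_i$ has exactly $s$ elements. To do this, enlarge each $A_i$ by adding fresh elements that are used nowhere else, and do the same for each $B_i$. Condition (1) survives, because a fresh element added to $A_i$ lies in no $B$-set, and symmetrically for the $B$-sets. Condition (2) survives, because sets only grow, so a nonempty intersection stays nonempty. Let $X$ be the (finite) ground set after padding.

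\textbf{Vectors and the key equivalence.} Assign to each $x\in X$ a vector $v_x\in\mathbb{R}^{r+s}$ so that any $r+s$ of them are linearly independent. For instance, take $v_x=(1,t_x,t_x^2,\ldots,t_x^{r+s-1})$ with distinct reals $t_x$; the Vandermonde determinant then guarantees independence. For an $r$-set $A$ (with a fixed ordering of $X$), define
\[
w_A := \bigwedge_{x\in A} v_x \in \Lambda^r(\mathbb{R}^{r+s}),
\]
and define $w_B\in\Lambda^s(\mathbb{R}^{r+s})$ likewise for an $s$-set $B$. The key fact is that $w_A\wedge w_B$ is, up to sign, the wedge of the vectors indexed by the multiset $A\cup B$. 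If $A\cap B\neq\emptyset$, a vector repeats, so the wedge is $0$. If $A\cap B=\emptyset$, these are $r+s$ distinct vectors in general position, hence a basis of $\mathbb{R}^{r+s}$, so the wedge is a nonzero multiple of the top form. Therefore
\[
w_{A_i}\wedge w_{B_i}\neq 0 \quad\text{for all } i, \qquad w_{A_i}\wedge w_{B_j}=0 \quad\text{for } i<j.
\]

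\textbf{Linear independence and the dimension count.} Next I would show that $w_{A_1},\ldots,w_{A_m}$ are linearly independent. Suppose $\sum_i c_i w_{A_i}=0$ with not all $c_i$ zero, and let $i^*$ be the \emph{largest} index with $c_{i^*}\neq 0$. Wedge the relation with $w_{B_{i^*}}$. Terms with $j<i^*$ vanish by condition (2), since $A_j\cap B_{i^*}\neq\emptyset$. Terms with $j>i^*$ have $c_j=0$. What remains is $c_{i^*}\,w_{A_{i^*}}\wedge w_{B_{i^*}}=0$, which forces $c_{i^*}=0$, a contradiction. Hence
\[
m\le \dim\Lambda^r(\mathbb{R}^{r+s})=\binom{r+s}{r}=\binom{r+s}{s}.
\]

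\textbf{Main obstacle.} The only delicate step is the equivalence ``$w_A\wedge w_B\neq 0 \iff A\cap B=\emptyset$''. It rests on two standard facts: the wedge of linearly independent vectors is nonzero, and the wedge is zero whenever a vector repeats. I would state both explicitly. If one prefers to avoid exterior algebra, the same argument can be phrased with determinants. Identify $w_A$ with the vector of its $r\times r$ minors (Plücker coordinates). Then $w_A\wedge w_B$ becomes the $(r+s)\times(r+s)$ determinant $\det[v_x: x\in A\cup B]$, which is bilinear in the Plücker coordinates via Laplace expansion. The independence argument goes through verbatim.
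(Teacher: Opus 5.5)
Your proof is correct, but it takes a different route from the paper.

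\textbf{Your route.} You use Lov\'asz's exterior-algebra encoding, which treats the two families symmetrically: $A_i\mapsto w_{A_i}\in\Lambda^r(\mathbb{R}^{r+s})$ and $B_j\mapsto w_{B_j}\in\Lambda^s(\mathbb{R}^{r+s})$. The wedge pairing into the one-dimensional space $\Lambda^{r+s}(\mathbb{R}^{r+s})$ detects disjointness, because $r+s$ distinct vectors in general position form a basis.

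\textbf{The paper's route.} The paper works in the symmetric power $\Sym^r(V)$ with $\dim V=s+1$, and treats the families asymmetrically. Each $A_i$ becomes the symmetric tensor $u_i=v_{a_{i,1}}\vee\cdots\vee v_{a_{i,r}}$. Each $B_j$ becomes a single linear form $f_j$ on $V$ vanishing on the hyperplane spanned by $\{v_b:b\in B_j\}$. This form is lifted to a product functional $F_j$ with $F_j(u_i)=\prod_{a\in A_i}f_j(v_a)$, which vanishes exactly when $A_i\cap B_j\neq\emptyset$. The triangular independence step is the same in both proofs, and $\dim\Sym^r(V)=\binom{r+s}{r}=\dim\Lambda^r(\mathbb{R}^{r+s})$.

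\textbf{What each buys.} The paper's version is essentially the polynomial method: the degree-$r$ homogeneous polynomials $y\mapsto\prod_{a\in A_i}\langle v_a,y\rangle$ in $s+1$ variables are evaluated at the normals of the $B_j$-hyperplanes. It needs general position only in dimension $s+1$. It uses nothing beyond the fact that $v_a\notin\mathrm{span}\{v_b:b\in B_j\}$ for $a\notin B_j$. Your version avoids choosing functionals and is symmetric in $r$ and $s$. It is also the form that extends to Lov\'asz's subspace version of the theorem, with $A_i,B_i$ replaced by $r$- and $s$-dimensional subspaces. There the criterion $w_A\wedge w_B\neq 0$ iff the spans meet trivially still makes sense. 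The paper's point--hyperplane incidence formula, by contrast, is tied to $A_i$ being a set of individual points.
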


 We include a proof of \Cref{thm:frankl} in \Cref{sec:appendix} for completeness.

\begin{proof}[Proof of \Cref{thm:moonflower_bnd}] Let $\calF $ be a family of $w$-sets which is $k$-moonflower free, and let $m=|\calF|$. Let $U:= \bigcup_{S \in \calF} S$ denote the universe. We construct families $\calA$ and $\calB$ as follows. Let $\calF_1 = \calF$,  $\calA=\calB = \emptyset$. For $i = 1,2,\ldots,m$ do:
\begin{itemize}
    \item Take $B_i \in \calF_i$ of minimal size. In particular, $B_i$ does not contain any other set in $\calF_i$. 
    \item Let $A_i \subseteq U \setminus B_i$ (not necessarily a set in $\calF_i$) be a set of minimal size intersecting all sets in $\calF_i \setminus \{B_i\}$. Such a set exists because $B_i$ does not contain any other set in $\calF_i$.
    \item Add $A_i$ to $\calA$ and $B_i$ to $\calB$.
    \item Set $\calF_{i+1} := \calF_i \setminus \{B_i\}$ and repeat.
\end{itemize}
Denote the resulting set families as $\calA = \{A_1,\ldots,A_m\}$ and $\calB = \{B_1,\ldots,B_m\}$. The resulting families $\calA$ and $\calB$ satisfy the following properties:
\begin{enumerate}[(i)]
    \item $A_i \cap B_i = \emptyset$ for all $i = 1,2,\ldots,m$.
    \item $A_i \cap B_j \neq \emptyset$ for $1 \leq i < j \leq m$.
    \item $|A_i| \leq k-1$ for all $i = 1,2,\ldots,m$.
\end{enumerate}
Notice that (i) holds as $A_i$ is contained in $U \setminus B_i$, (ii) holds because $A_i$ intersects all $S \in \calF_i \setminus \{B_i\}$ by definition, which implies that $A_i \cap B_j \neq \emptyset$ for all $j > i$. It remains to show (iii).

Since $A_i$ is minimal, removing any element from $A_i$ would miss some set; 
that is, for each $s \in A_i$, there exists a set $T_s \in \calF_i$ such that $A_i \cap T_s = \{s\}$. This implies that $\{T_s\}_{s \in A_i}$ form a $|A_i|$-moonflower which is a contradiction if $|A_i| \geq k$. 

Consequently, we have obtain a $(k-1)$-set family $\calA$ and a $w$-set family $\calB$ satisfying the conditions of \Cref{thm:frankl}.  Applying \Cref{thm:frankl}, we obtain 
\[
|\calF| \leq \binom{k+w-1}{w}.
\]
\end{proof}

We next prove that the bound is tight.

\begin{lemma}[\Cref{wsf_lower_bound}, restated]\label{lem:WSF_lower_bnd_restated}
    There exists a family of $w$-sets $\calF$ such that $|\calF| = \binom{k+w-1}{w}$ and $\calF$ is $k$-moonflower-free. 
\end{lemma}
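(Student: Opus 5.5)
The plan is to give an explicit family defined by a ``prefix'' condition, count it with the hockey-stick identity, and rule out $k$-moonflowers by counting private elements inside a prefix of the universe. Two remarks guide the choice. First, a $w$-set may have any size up to $w$, including $0$; the empty set can never be a petal, since it has no private element. Second, for $k=2$ the chain $\emptyset\subset\{1\}\subset\cdots\subset[w]$ already has the required $w+1$ sets, because of two comparable sets the smaller has no private element. I would generalise the chain by letting a set of size $j$ live only inside a prefix of length $k-2+j$.

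Concretely, for $k\ge 2$ define
\[
\calF := \{S \subseteq [k+w-2] : |S| \le w \text{ and } S \subseteq [k-2+|S|]\}.
\]
The sets of size exactly $j$ are precisely the $j$-subsets of $[k-2+j]$, so
\[
|\calF| = \sum_{j=0}^{w} \binom{k-2+j}{j} = \binom{k+w-1}{w}
\]
by the hockey-stick identity. As sanity checks, $k=2$ recovers the chain above, and $k=3,w=1$ gives $\{\emptyset,\{1\},\{2\}\}$. If the statement is meant for $k=1$ as well, the bound is $1$ and $\calF=\{\emptyset\}$ works trivially.

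For moonflower-freeness, suppose $S_1,\ldots,S_k\in\calF$ form a $k$-moonflower. By the remark after \Cref{definition:moonflower}, each $S_i$ has an element $x_i$ lying in no other $S_{i'}$, and the $x_i$ are pairwise distinct. Let $S=S_{i_0}$ be a member of maximal size $j$.
\begin{itemize}
\item Every $S_i$ satisfies $S_i\subseteq[k-2+|S_i|]\subseteq[k-2+j]$, so $S\cup\{x_i:i\neq i_0\}\subseteq[k-2+j]$.
\item The $k-1$ elements $x_i$ with $i\ne i_0$ avoid $S$ and are distinct, so this union has exactly $j+(k-1)$ elements.
\item This gives $j+k-1\le k-2+j$, a contradiction.
\end{itemize}
Hence $\calF$ is $k$-moonflower-free.

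The only real obstacle is choosing the construction. The uniform families one first tries fail: all $w$-subsets of $[k+w-1]$ contain the moonflower $\{[k+w-1]\setminus(X\setminus\{x\}) : x\in X\}$ for any $k$-set $X$. Mixing all $w$- and $(w-1)$-subsets of $[k+w-2]$ fails similarly. The nested-prefix idea, which uses small sets including $\emptyset$ as ``free'' members, is what lets the count reach $\binom{k+w-1}{w}$. Once that is fixed, both the count and the maximal-set argument are routine.
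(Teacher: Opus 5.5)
Your proposal is correct and matches the paper's proof: it uses the same family (the $s$-subsets of $[k+s-2]$ for $s=0,\ldots,w$), the same hockey-stick count, and the same contradiction of fitting $|S|+k-1$ distinct elements into the prefix $[k-2+|S|]$. The only cosmetic difference is which member you single out: you take a member of maximal size, while the paper takes the member whose private element is largest, and either choice guarantees that all the private elements lie in that prefix.
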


\begin{proof}
    For $s=0,\ldots,w$ define the following family of sets
    \[
    \calF_s := \left\{S \subseteq [k+s-2]: |S| = s \right\}.
    \]
    Let $\calF = \cup_{s=0}^w \calF_s$. We claim that $\calF$ satisfies the requirement of the lemma.
    First, note that $\calF$ is a family of $w$-sets of size
    \[
    |\calF| = \sum_{s=0}^w {k+s-2 \choose s} = {k+w-1 \choose w}.
    \]    
    We next show that $\calF$ does not contain a $k$-moonflower. Assume towards a contradiction that it contains a $k$-moonflower $S_1,\ldots,S_k$. For each $i=1,\ldots,k$ let $x_i$ be an element of $S_i$ which is not in any other $S_j$. Assume without loss of generality that $x_1$ is maximal among $x_1,\ldots,x_k$, and let $s=|S_1|$. Note that $S_1 \subset [k+s-2]$ is a set of size $s$, and $x_2,\ldots,x_k \in [k+s-2]$ are $k-1$ elements distinct from it. This leads to a contradiction.
\end{proof}

%% file: universe_reduction.tex
In this section, we use the extremal moonflower bound (\Cref{thm:moonflower_bnd}) to obtain two structural results on moonflower-free sets. The main technical ingredient of the proofs is \emph{entropy amplification} \cite{Gil22, Saw23}. As a warm-up, in \Cref{subsec:large_family_implies_popular}, using entropy amplification, we show that any large $k$-moonflower-free set family must contain a popular element. Then, for the rest of this section, we work our way towards proving our main universe reduction theorem (\Cref{thm:universe_reduction}).

Before we start with the proof, we need to make a
definition that will play an important role in the proof of \Cref{thm:universe_reduction}. 

\begin{definition}[$p$-covered family]
    A family $\calG \subseteq 2^{[n]}$ is \emph{$p$-covered} if there exists a distribution $Q$ on $[n]$ such that for every $S \in \calG$,
    \[
    \Pr_{i \sim Q} [i \in S] \geq p.
    \]
    Equivalently, $\sum_{i \in S} Q(i) \geq p$ for all $S \in \calG$.
\end{definition}

It is worth noting that the distribution $Q$ in a $p$-covered family is the LP dual of a $p$-smooth distribution $D$. Moreover, if we know a $p$-cover $Q$ for a set family $\calF$ exists, we can simply sample from $Q$. The main steps of the proof of \Cref{thm:universe_reduction} can then be described as follows:
\begin{enumerate}
    \item We first show that any $p$-smooth distribution $D$ supported on a $k$-moonflower free family $\calF$ has entropy at most $O( \log \binom{k+w/p}{k} \cdot p  \log(1/p))$. This step critically uses the entropy-amplification property of taking unions proved by \cite{Gil22, Saw23}. See \Cref{subsec:universe_reduction}. 
    \item We then use LP-duality to show the following. Suppose $\calF$ has the property that for any  $J \subseteq [n]$, any $p$-smooth distribution on $\calF_J$ has entropy at most $H$. Then, there exists a small set of bad elements $\calS \subset \calF$, $|\calS| \leq 2^H$ such that $\calF \setminus \calS$ is $p$-covered. See \Cref{subsec:entropy_to_cover}. 
    \item Given this, we show that if a family of sets $\calF$ has the following property: for any $J \subseteq [n]$, there exists $\calS_J \subset \calF_J$ of size $|\calS_J| \leq 2^H$ such that $\calF_J\setminus \calS_J$ is $p$-covered. Then, we can find a small set $I \subseteq [n]$ supporting most of the sets in $\calF$. The size of $I$ depends on the quantity $H$. \Cref{thm:universe_reduction} then follows by bounding the quantity $H$ using extremal moonflower bound (\Cref{thm:moonflower_bnd}). See \Cref{subsec:universe_reduction}
    \item Finally, in \Cref{subsec:universe_reduction_LB}, we complement our universe reduction result with a lower bound.
\end{enumerate}

\textbf{Remark. }The idea of using LP-duality and sampling can be found in \cite{BG25}. With our new extremal moonflower bound (\Cref{thm:moonflower_bnd}), we are able to significantly simplify the proof in \cite{BG25}. 

\subsection{Large family size implies existence of popular elements}\label{subsec:large_family_implies_popular}

\begin{theorem}[Theorem \ref{thm:large_family_size_implies_existence_of_popular_element}, restated]\label{lem:large_family_size_implies_existence_of_popular_element}
    Let $\calF \subseteq 2^{[n]}$ be a family of $w$-sets which is $k$-moonflower-free. If $|\calF| > {\binom{k+tw}{tw}}^{\Omega((\log t)/t)}$  for some integer $t \geq 2$, then there exists an element $i \in [n]$ such that $i$ is contained in more than $1/2t$ fraction of the sets in $\calF$. 
\end{theorem}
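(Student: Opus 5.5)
We argue by contraposition. Suppose no element lies in more than a $1/2t$ fraction of the sets of $\calF$, and let $D$ be the uniform distribution on $\calF$. Then $D$ is $p$-smooth with $p = 1/2t$, and $H(D) = \log|\calF|$.

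The plan has three steps.

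\textbf{Step 1: lower bound.} We apply Gilmer's entropy lemma (\Cref{lemma:entropy_lemma}) to the union of $t'$ i.i.d.\ samples. Here $t'$ is the largest power of two with $t' \le t$, which loses only a constant factor. The hypothesis $1-(1-p)^{t'/2} \le \frac{3-\sqrt5}{2}$ holds because $1-(1-1/2t)^{t/2} \le 1/4 < 0.38$. Since $p = \Theta(1/t')$, the remark after the lemma gives an amplification factor $h(1-(1-p)^{t'})/h(p) = \Theta(t/\log t)$. Hence
\[
H\Big(\bigvee_{i=1}^{t'} A_i\Big) \ \ge\ c\,\frac{t}{\log t}\,\log|\calF|
\]
for some absolute constant $c > 0$.

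\textbf{Step 2: upper bound.} The random variable $\bigvee_{i=1}^{t'} A_i$ takes values in $\calU_{t'}(\calF)$. By \Cref{lem:moonflower_free_preserved_under_union}, $\calU_{t'}(\calF)$ is $k$-moonflower-free. It also consists of $t'w$-sets, hence of $tw$-sets. By \Cref{thm:moonflower_bnd},
\[
|\calU_{t'}(\calF)| \le \binom{k+tw-1}{tw} \le \binom{k+tw}{tw},
\]
so the entropy of the union is at most $\log\binom{k+tw}{tw}$.

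\textbf{Step 3: combine.} Putting the two bounds together gives
\[
\log|\calF| \le C\,\frac{\log t}{t}\,\log\binom{k+tw}{tw},
\]
that is, $|\calF| \le \binom{k+tw}{tw}^{C(\log t)/t}$. This contradicts the hypothesis once the constant hidden in the $\Omega$ is taken to be $C$. So some element lies in more than a $1/2t$ fraction of the sets.

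\textbf{Main obstacle.} The only delicate step is the quantitative form of the amplification factor. We need $h(1-(1-p)^{t'}) = \Omega(1)$ and $h(p) = \Theta((\log t)/t)$ when $p = 1/2t$ and $t' \in (t/2, t]$.
\begin{itemize}
\item For the numerator, $1-(1-p)^{t'} \in [1-e^{-1/4},\, 1-e^{-1/2}]$, which is bounded away from $0$ and $1$, so its binary entropy is $\Omega(1)$.
\item For the denominator, $h(p) \approx p\log(1/p) = \Theta((\log t)/t)$.
\end{itemize}
The power-of-two rounding and the case of small $t$ (say $t = 2$ or $3$) only affect constants, and the $\Omega$ in the statement absorbs them.

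Alternatively, we could invoke \Cref{lem:entropy_bound_from_size_of_r_fold_unions} directly with $p = 1/2t$ and $r = 4t \ge 2/p$. The resulting bound is of the same form, with $\binom{k+4tw}{4tw}$ in place of $\binom{k+tw}{tw}$, and differs only in constants inside the exponent.
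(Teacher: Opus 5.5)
Your proof is correct and is essentially the paper's argument: Gilmer's entropy lemma lower-bounds the entropy of the $t$-fold union, and \Cref{lem:moonflower_free_preserved_under_union} plus the extremal bound \Cref{thm:moonflower_bnd} upper-bounds $|\calU_t(\calF)|$; you handle the power-of-two rounding more carefully than the paper does. One harmless slip is that $1-(1-p)^{t'}$ is not confined to $[1-e^{-1/4},1-e^{-1/2}]$ (for example, it equals $7/16$ at $t=t'=2$), but it always lies in $[1-e^{-1/4},7/16]$, so its binary entropy is still $\Omega(1)$ and nothing else changes.
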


The following corollary simplifies the binomial coefficient involved by breaking it down into three cases depending on the relative magnitude of $k$ and $tw$.
\begin{corollary}
     Let $\calF \subseteq 2^{[n]}$ be a family of $w$-sets which is $k$-moonflower-free. 
     Assume for some integer $t \ge 2$ that all elements $i \in [n]$ are contained in less than a $1/2t$ fraction of sets in $\calF$. Then 
     \[
     |\calF| \leq
     \begin{cases}
         \left( \frac{k}{tw}\right)^{O(w \log t)} &\text{when }k \gg tw \\
         t^{O(w)} &\text{when } k \asymp tw \\
         \left( \frac{tw}{k}\right)^{O(k\log t / t)} &\text{when } k \ll tw.
     \end{cases}
     \]
\end{corollary}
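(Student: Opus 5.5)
The corollary is the contrapositive of \Cref{lem:large_family_size_implies_existence_of_popular_element} combined with elementary estimates of the binomial coefficient. If no element lies in a $1/2t$ fraction of the sets, then the theorem gives
\[
|\calF| \le \binom{k+tw}{tw}^{O((\log t)/t)} .
\]
So it suffices to bound $\log\binom{k+tw}{tw}$ in each of the three regimes and multiply the result by $O((\log t)/t)$.

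\textbf{Regime $k \gg tw$.} We use $\binom{k+tw}{tw}\le \left(e(k+tw)/tw\right)^{tw} = (O(k/tw))^{tw}$. Raising this to the power $O((\log t)/t)$ gives $(O(k/tw))^{O(w\log t)}$. Because $k/tw$ is large, the constant inside can be absorbed into the exponent, which yields $(k/tw)^{O(w\log t)}$.

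\textbf{Regime $k \asymp tw$.} Here $\binom{k+tw}{tw}\le 2^{k+tw} = 2^{O(tw)}$. After the $O((\log t)/t)$ power this becomes $2^{O(w\log t)} = t^{O(w)}$.

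\textbf{Regime $k \ll tw$.} We use the symmetric form $\binom{k+tw}{k}\le (e(k+tw)/k)^{k} = (O(tw/k))^{k}$. The power $O((\log t)/t)$ then gives $(tw/k)^{O(k\log t/t)}$, again absorbing constants since $tw/k$ is large.

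The only delicate point is the precise meaning of $\gg$ and $\asymp$. I would define the regimes so that $k\ge C\,tw$ gives the first case, $k\le tw/C$ the third, and everything in between the second, with $C$ a large constant. This ensures the base of the power in the extreme cases is at least $C$, so factors like $e$ and $(1+1/C)$ can be absorbed into the $O(\cdot)$ in the exponent.

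In the middle case the bound $2^{k+tw}$ with $k\le C\,tw$ gives exponent $O(tw)$ uniformly. The corollary therefore follows immediately from the theorem without further combinatorial input.
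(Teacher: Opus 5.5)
Your proposal is correct and takes the same route as the paper. You observe that the hypothesis makes the uniform distribution on $\calF$ $(1/2t)$-smooth, and you apply \Cref{lem:large_family_size_implies_existence_of_popular_element} (equivalently \Cref{lem:smooth_uniform_implies_small_family_size}) to get $|\calF|\le\binom{k+tw}{tw}^{O((\log t)/t)}$. You then apply the standard estimates $\binom{a+b}{b}\le (e(a+b)/b)^b$ and $\binom{a+b}{b}\le 2^{a+b}$ in the three regimes, which is exactly what the paper's one-line proof (``use the standard approximation for binomial coefficients'') intends. Your explicit thresholds $k\ge C\,tw$ and $k\le tw/C$ correctly make the informal $\gg$ and $\asymp$ precise, and they absorb the constant factors into the exponent.
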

\begin{proof}
    Use the standard approximation for binomial coefficients.
\end{proof}
Towards a proof of \Cref{lem:large_family_size_implies_existence_of_popular_element}, we first use \Cref{lemma:entropy_lemma} to prove the following corollary that bounds the size of $t$-fold unions provided the uniform distribution over $\calF$ is $p$-smooth. Recall that $\calU_t(\calF):= \{S_1 \cup ... \cup S_t: S_1,...,S_t \in \calF\}$ is the $t$-fold union of $\calF$.

\begin{lemma}[Smooth uniform distribution implies large union size]\label{lem:bound_t_union_size}
    Let $\calF$ be a finite set family. Suppose the uniform distribution over $\calF$ is $(1/2t)$-smooth, then for some constant $c_1$, we have
    \[
    |\calU_t (\calF)| \geq |\calF|^{c_1t / \log t}.
    \]
\end{lemma}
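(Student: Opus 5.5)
Let $D$ be the uniform distribution on $\calF$, so $H(D)=\log|\calF|$, and $D$ is $p$-smooth with $p=1/(2t)$. The plan is to apply Gilmer's entropy lemma (\Cref{lemma:entropy_lemma}) to the union of $t'$ i.i.d.\ samples $A_1,\ldots,A_{t'}\sim D$, where $t'$ is a power of two comparable to $t$. Then we use the trivial bound that the entropy of a random variable is at most the logarithm of its support size. The union $\bigvee_{i\le t'}A_i$ is supported on $\calU_{t'}(\calF)$. If we take $t'\le t$, this family is contained in $\calU_t(\calF)$, since repeating a set does not change a union. Hence
\[
\log|\calU_t(\calF)| \ \ge\ H\Bigl(\bigvee_{i=1}^{t'} A_i\Bigr) \ \ge\ \log|\calF|\cdot \frac{h(1-(1-p)^{t'})}{h(p)}.
\]

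\textbf{Choice of $t'$.} The lemma needs $1-(1-p)^{t'/2}\le (3-\sqrt5)/2\approx 0.38$. I would take $t'$ to be the largest power of two with $t'\le t$, so $t/2< t'\le t$. Then $(1-p)^{t'/2}\ge 1-pt'/2\ge 1-1/4$, so $1-(1-p)^{t'/2}\le 1/4<0.38$, and the hypothesis holds.

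\textbf{The ratio.} The remaining work, and the only real computation, is to show $h(1-(1-p)^{t'})/h(p)=\Omega(t/\log t)$.
\begin{itemize}
\item The quantity $q:=1-(1-p)^{t'}$ lies between constants. We have $q\le pt'\le 1/2$, and $q\ge 1-e^{-pt'}\ge 1-e^{-1/4}$. So $h(q)=\Theta(1)$.
\item For the denominator, $h(p)=\Theta(p\log(1/p))=\Theta(\log t/t)$, using $p=1/(2t)\le 1/4$ since $t\ge 2$.
\end{itemize}
Thus the ratio is at least $c_1 t/\log t$ for an absolute constant $c_1>0$, which gives $|\calU_t(\calF)|\ge|\calF|^{c_1 t/\log t}$.

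\textbf{Main obstacle.} I expect this constant-level bookkeeping to be the only delicate point. It consists of verifying the hypothesis of \Cref{lemma:entropy_lemma} and handling the power-of-two rounding, and both fit within the slack chosen above. For small $t$ (for example $t=2$, where $\log t=1$), the bound still holds with a suitably small $c_1$, since all the estimates above are uniform in $t\ge 2$.
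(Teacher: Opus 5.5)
Your proof is correct and follows the paper's route. The paper likewise applies Gilmer's entropy lemma to the uniform distribution on $\calF$ with $p=1/(2t)$, bounds the entropy of the union by $\log|\calU_t(\calF)|$, and uses that the ratio $h(1-(1-p)^{t})/h(p)$ is $\Theta(t/\log t)$. The only difference is that you explicitly handle $t$ that is not a power of two, by passing to the largest power of two $t'\le t$ and using $\calU_{t'}(\calF)\subseteq\calU_t(\calF)$; the paper's proof simply assumes $t$ is a power of two and never justifies this reduction.
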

\begin{proof}
    For a fixed $t$ that is a power of 2, let $p = c / t$ for some constant\footnote{need this to ensure the condition $1 - (1-p)^{t/2} \leq \frac{3-\sqrt{5}}{2}$.} $c \le 0.7$. Let $D$ be the uniform distribution on $\calF$, and let $A_1,\ldots,A_t \sim D$ be uniformly chosen. Apply \Cref{lemma:entropy_lemma} to get
    \[
    \log |\calU_t (\calF)| \geq H \left(\bigvee_{i=1}^t A_i \right) \geq H(D) \cdot \frac{h(1 - (1-p)^t)}{h(p)} \ge  \log |\calF| \cdot \frac{c_1 t}{\log t}
    \]
    for some constant $c_1$. Here we used the fact that the distribution $\vee_{i=1}^t A_i$ is supported on 
    $\calU_t(\cal F)$, and so its entropy is bounded by $\log |\calU_t(\calF)|$, and that $D$ is uniform on $\calF$ and so its entropy equals $\log |\calF|$. Raising both sides to the power of 2 and simplifying, we obtain the desired bound.
\end{proof}

\begin{lemma}\label{lem:smooth_uniform_implies_small_family_size}
    Let $\calF$ be a family of $w$-sets which is $k$-moonflower-free. Suppose the uniform distribution over $\calF$ is $(1/2t)$-smooth, then for some absolute constant $C>0$ we have 
    \[
    |\calF| \le {\binom{k+tw}{tw}}^{C (\log t)/t}.
    \]
\end{lemma}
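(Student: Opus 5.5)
The plan is to sandwich $|\calU_t(\calF)|$ between two bounds. The lower bound comes from entropy amplification; the upper bound comes from the extremal moonflower bound.

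\emph{Lower bound.} Since the uniform distribution on $\calF$ is $(1/2t)$-smooth, \Cref{lem:bound_t_union_size} gives
\[
|\calU_t(\calF)| \geq |\calF|^{c_1 t/\log t}.
\]
That lemma was stated for $t$ a power of two, so a general $t \ge 2$ needs a small adjustment. Let $t'$ be the largest power of two with $t' \le t$. Then $1/(2t) \le 1/t'$, so the distribution is also $O(1/t')$-smooth, which is enough to run the same argument with $p = c/t'$ for a suitable constant. Moreover, $\calU_{t'}(\calF) \subseteq \calU_t(\calF)$ because repeated sets are allowed in a union. Since $t' \ge t/2$, the loss is only in the constants.

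\emph{Upper bound.} By \Cref{lem:moonflower_free_preserved_under_union}, $\calU_t(\calF)$ is $k$-moonflower-free. Every member of $\calU_t(\calF)$ is a union of $t$ sets of size at most $w$, so it is a $tw$-set. Applying \Cref{thm:moonflower_bnd} with $w$ replaced by $tw$ gives
\[
|\calU_t(\calF)| \le \binom{k+tw-1}{tw} \le \binom{k+tw}{tw}.
\]

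\emph{Combining.} Putting the two bounds together,
\[
|\calF|^{c_1 t/\log t} \le \binom{k+tw}{tw}.
\]
Taking logarithms and rearranging yields
\[
|\calF| \le \binom{k+tw}{tw}^{C(\log t)/t}
\quad\text{with } C = 1/c_1.
\]

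The only delicate step is the power-of-two and constant bookkeeping in applying \Cref{lemma:entropy_lemma}. We need to check that $p=\Theta(1/t)$ satisfies $1-(1-p)^{t/2} \le (3-\sqrt5)/2$, which holds for a small enough constant in $p$. We also need
\[
\frac{h(1-(1-p)^t)}{h(p)} = \Theta\!\left(\frac{t}{\log t}\right),
\]
which follows because the numerator is a positive constant and $h(p) = \Theta\bigl((\log t)/t\bigr)$. Everything else is direct.
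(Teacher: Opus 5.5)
Your proposal is correct and matches the paper's proof. Like the paper, you sandwich $|\calU_t(\calF)|$ between the entropy lower bound $|\calF|^{c_1 t/\log t}$ from \Cref{lem:bound_t_union_size} and the upper bound $\binom{k+tw-1}{tw}$ from \Cref{lem:moonflower_free_preserved_under_union} and \Cref{thm:moonflower_bnd}, then rearrange. Your reduction to a power of two $t' \ge t/2$, using $\calU_{t'}(\calF) \subseteq \calU_t(\calF)$, is a valid extra detail that the paper glosses over; it works because the distribution is in fact $1/(2t')$-smooth, so $p = 1/(2t')$ meets the hypothesis of \Cref{lemma:entropy_lemma}, whereas $p = 1/t'$ would narrowly violate it.
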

\begin{proof}
    On one hand, by \Cref{lem:moonflower_free_preserved_under_union}, we know $\calU_t(\calF)$ is a family of $tw$-sets and $k$-moonflower free. By \Cref{thm:moonflower_bnd}, we know 
    \[
    |\calU_t (\calF)| \leq \binom{k+tw-1}{tw}.
    \] On the other hand, by \Cref{lem:bound_t_union_size}, we have $|\calU_t (\calF)| \geq |\calF|^{c_1t / \log t}$. We thus find
    \[
    \binom{k+tw-1}{tw}  \geq |\calF|^{c_1t / \log t}.
    \]
    The desired statement then follows by rearranging.
\end{proof}

\begin{proof}
    (of \Cref{lem:large_family_size_implies_existence_of_popular_element}) By \Cref{lem:smooth_uniform_implies_small_family_size}, we know if $|\calF| \ge {\binom{k+tw}{tw}}^{C (\log t)/t}$ for some large enough constant $C$, then the uniform distribution over $\calF$ cannot be $(1/2t)$-smooth. This implies there must exist some element $i \in [n]$ that belongs to at least a $(1/2t)$-fraction of the sets in $\calF$. 
\end{proof}

\subsection{Entropy bound}\label{subsec:universe_reduction}

In this subsection, show the following entropy bound on $p$-smooth distributions over $k$-moonflower-free $w$-set families.

\begin{theorem}[Entropy bound on $p$-smooth distributions]\label{thm:ent_bnd_p_smooth}
    Let $\calF$ be a $k$-moonflower-free family of $w$-sets. Let $D$ be a $p$-smooth distribution supported on $\calF$ for $0 < p \leq 1/2$. Then 
    \[
    H(D) \leq O\left( p \log (1/p) \cdot \log \binom{k+w \lceil 2/p \rceil}{k}\right).
    \]
\end{theorem}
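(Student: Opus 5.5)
The plan is to combine \Cref{lem:entropy_bound_from_size_of_r_fold_unions} with the extremal bound \Cref{thm:moonflower_bnd}, applied to the $r$-fold union family. Set $r := \lceil 2/p \rceil$, so that $r \geq 2/p$ and the hypothesis of \Cref{lem:entropy_bound_from_size_of_r_fold_unions} holds. Since $D$ is $p$-smooth and supported on $\calF$, that lemma gives
\[
H(D) \leq O\big(p \log(1/p) \cdot \log |\calU_r(\calF)|\big).
\]
It therefore suffices to bound $|\calU_r(\calF)|$.

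By \Cref{lem:moonflower_free_preserved_under_union}, $\calU_r(\calF)$ is still $k$-moonflower-free. Each of its members is a union of $r$ sets of size at most $w$, so it is an $rw$-set. Applying \Cref{thm:moonflower_bnd} with set size $rw$ then gives
\[
|\calU_r(\calF)| \leq \binom{k+rw-1}{rw} \leq \binom{k+rw}{rw} = \binom{k + w\lceil 2/p\rceil}{k}.
\]
Here we use the symmetry $\binom{a+b}{b}=\binom{a+b}{a}$ and monotonicity in the top index. Substituting this into the entropy inequality yields the claimed bound.

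The main work is verifying that \Cref{lem:entropy_bound_from_size_of_r_fold_unions} applies as stated. It is quoted from \cite{BG25}, and I would check its hypotheses carefully. The first is that $p \leq 1/2$, which guarantees $r \geq 2$ and $\log(1/p) \geq 1$. The second is that the constant absorbs rounding $r$ to an integer; if the cited lemma needs $r$ to be a power of two (coming from Gilmer's lemma), I would round $r$ up to the next power of two, at most $4/p$. This changes the binomial only by replacing $\lceil 2/p\rceil$ with a constant multiple. Since $\log\binom{k+cw/p}{k}$ is within a constant factor of $\log\binom{k+w/p}{k}$ via $\binom{k+cm}{k}\le c^k\binom{k+m}{k}$ and $\binom{k+m}{k}\geq 2^{\min(k,m)}$, this rounding should be absorbed into the $O(\cdot)$. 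Beyond this bookkeeping, the argument is a two-line composition of earlier results.
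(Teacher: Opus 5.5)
Your proposal is correct and is the paper's proof: set $r=\lceil 2/p\rceil$, apply \Cref{lem:entropy_bound_from_size_of_r_fold_unions}, then bound $|\calU_r(\calF)|\le\binom{k+wr}{k}$ using \Cref{lem:moonflower_free_preserved_under_union} and \Cref{thm:moonflower_bnd}. The power-of-two rounding caveat is unnecessary, since the lemma as stated only requires $r\ge 2/p$; if you did need it, your $c^k$ absorption argument only works when $w\lceil 2/p\rceil\ge k$, and the other case would need the symmetric estimate via $\binom{k+cm}{cm}$.
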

\begin{proof}
    Let $r = \lceil 2/p \rceil$.
    Consider the $r$-fold union family $\calU_{r}(\calF)$. By \Cref{lem:entropy_bound_from_size_of_r_fold_unions}, we know
    \[
    H(D) \leq  O ( p\log(1/p) \cdot \log |\calU_r (\calF)|).
    \]
    It remains to bound $|\calU_r (\calF)|$. Notice that every set in $\calU_r(\calF)$ has size at most $wr$. Moreover, by \Cref{lem:moonflower_free_preserved_under_union}, we know $\calU_r (\calF)$ is $k$-moonflower-free. We can then apply the extremal moonflower bound (\Cref{thm:moonflower_bnd}) and bound
    \[
    |\calU_r (\calF)| \leq \binom{k+wr}{k}.
    \]
    The desired statement follows. 
\end{proof}

\subsection{From entropy to coverability}\label{subsec:entropy_to_cover}
In this section, we show how to obtain a $p$-cover of $\calF$. Namely, we are going to prove the following theorem. 
\begin{theorem}[Coverability from entropy-bounded smoothness, adapted from {\cite[Lemma~4.15]{BG25}}]
\label{thm:coverability_from_smoothness}
    Let $\calF \subseteq 2^{[n]}$ and $p \in (0, 1)$. Assume every $p$-smooth distribution supported on $\calF$ has entropy $\leq H$. Then there exists $\calS \subseteq \calF$ with $|\calS| \leq 2^H$ such that $\calF \backslash \calS$ is $p$-covered.
\end{theorem}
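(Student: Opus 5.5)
The plan is to take a maximum-entropy $p$-smooth distribution $D$ on $\calF$ and let $\calS$ be its heavy sets,
\[
\calS := \{T \in \calF : D(T) \geq 2^{-H}\}.
\]
Then $|\calS| \leq 2^H$ automatically, since the masses of these sets sum to at most $1$. The remaining work is to show that $\calF \setminus \calS$ is $p$-covered, by contradiction using a perturbation of $D$.

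\textbf{Step 1 (duality).} Set $\calG := \calF \setminus \calS$, assumed nonempty (otherwise there is nothing to prove). Consider the zero-sum game where one player picks $Q$ on $[n]$, the other picks $D'$ on $\calG$, and the payoff is $\Pr_{i\sim Q, T \sim D'}[i \in T]$. By von Neumann's minimax theorem,
\[
\max_Q \min_{T\in\calG} Q(T) \;=\; \min_{D'} \max_{i\in[n]} \Pr_{T\sim D'}[i\in T].
\]
If $\calG$ is not $p$-covered, the left side is $<p$. Hence there is a distribution $D'$ on $\calG$ with $\Pr_{T\sim D'}[i\in T] < p$ for all $i$; in particular $D'$ is $p$-smooth. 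The strict inequality is not needed later, only $p$-smoothness.

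\textbf{Step 2 (existence of the maximizer).} The set of $p$-smooth distributions on the finite family $\calF$ is a compact convex polytope, and $H$ is continuous on it. If this set is empty, take $\calS=\emptyset$: the duality of Step 1 applied to $\calG=\calF$ shows $\calF$ itself is $p$-covered, since otherwise Step 1 would produce a $p$-smooth distribution on $\calF$. Otherwise choose $D$ maximizing $H(D)$; by hypothesis $H(D)\le H$.

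\textbf{Step 3 (perturbation).} For $\varepsilon\in[0,1]$ let $D_\varepsilon := (1-\varepsilon)D + \varepsilon D'$. This is still $p$-smooth and supported on $\calF$, since the smoothness constraints are linear. Differentiating $H(D_\varepsilon) = -\sum_T D_\varepsilon(T)\log D_\varepsilon(T)$ at $\varepsilon = 0^+$ gives
\[
\frac{d}{d\varepsilon}H(D_\varepsilon)\Big|_{0^+} = -\sum_T D'(T)\log D(T) \;-\; H(D),
\]
where the constant term from differentiating $q\log q$ cancels because $\sum_T (D'(T)-D(T)) = 0$. Every $T$ in the support of $D'$ lies in $\calG$, so $D(T) < 2^{-H}$ and hence $-\log D(T) > H$. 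This gives $-\sum_T D'(T)\log D(T) > H \ge H(D)$, so the derivative is strictly positive. If some such $T$ has $D(T)=0$, the derivative is $+\infty$, and the conclusion holds a fortiori (directly, $-x\log x$ has infinite slope at $0$). Thus $H(D_\varepsilon) > H(D)$ for small $\varepsilon>0$, contradicting the maximality of $D$. So $\calG$ is $p$-covered.

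\textbf{Main obstacle.} The delicate points are the one-sided derivative computation, particularly when $D'$ charges sets outside the support of $D$, and making sure the duality step yields a distribution that is genuinely $p$-smooth under the paper's non-strict definition. I would handle the first by a direct one-sided expansion of $-x\log x$ near $0$. I would handle the second by noting that "not $p$-covered" gives a strict inequality in the minimax value, which is more than enough.
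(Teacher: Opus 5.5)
Your proof is correct, but it takes a different route from the paper. Both arguments use the same duality to get a $p$-smooth distribution on the uncovered remainder and mix it into an optimizer; what differs is the objective being optimized.

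The paper picks the $p$-smooth $\nu^*$ that minimizes the largest atom mass $\|\nu\|_\infty$ and takes $\calS$ to be the atoms attaining that maximum $\tau^*$. It shows $\calF\setminus\calS$ is $p$-covered by a purely linear perturbation: mixing $\nu^*$ with a $p$-smooth $\mu$ supported off $\calS$ would strictly lower the maximum mass. Entropy appears only at the end, through $H \ge H(\nu^*) \ge \log(1/\|\nu^*\|_\infty)$, which gives $|\calS| \le 1/\tau^* \le 2^H$.

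You instead maximize Shannon entropy and build the threshold $2^{-H}$ into the definition of $\calS$. This makes the cardinality bound trivial and moves the work into the covering step. That step is the first-order optimality condition for maximum entropy: the directional derivative toward $D'$ is the cross-entropy $-\sum_T D'(T)\log D(T)$ minus $H(D)$. It is positive because $D'$ only charges atoms lighter than $2^{-H}$. Your argument therefore needs the one-sided derivative and the boundary case $D(T)=0$, both of which you handle correctly; the paper's route needs no calculus.

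The paper's route also gives a slightly stronger statement. Its bound is $1/\tau^* = 2^{\max_\nu H_\infty(\nu)}$, so the hypothesis could be weakened to a bound on the min-entropy of $p$-smooth distributions. Your argument genuinely uses Shannon entropy; taking the threshold $2^{-H(D)}$ instead of $2^{-H}$, it gives $2^{\max_\nu H(\nu)}$. In exchange, your version describes $\calS$ cleanly as the heavy atoms of the maximum-entropy smooth distribution.
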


While the statement is similar to \cite[Lemma 4.15]{BG25}, we provide a simplified proof. First, we establish the duality of coveredness and smoothness.

\begin{proposition}[Cover vs. smooth duality]\label{prop:minimax_smooth_cover}
    Let $\calG \subseteq 2^{[n]}$ and $p \in (0, 1)$. Define
    \[
    \Phi (\calG) := \max_{Q \text{ distribution on }[n]} \min_{T \in \calG} \sum_{i \in T}Q(i).
    \]
    Then 
    \[
    \Phi (\calG) = \min_{D \text{ distribution on }\calG} \max_{i \in [n]} \Pr_{T \in D}[i \in T].
    \]
    In particular,
    \begin{enumerate}
        \item $\calG$ is $p$-covered if and only if $\Phi (\calG) \geq p$;
        \item If $\calG$ is not $p$-covered, then there exists a $p$-smooth distribution supported on $\calG$.
    \end{enumerate}
\end{proposition}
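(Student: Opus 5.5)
This is a finite zero-sum game, so the plan is to apply von Neumann's minimax theorem directly. Consider the $|\calG| \times n$ payoff matrix $M$ with $M_{T,i} = \mathbbm{1}[i \in T]$. The \emph{cover player} chooses a mixed strategy $Q$ on the columns $[n]$. The \emph{set player} chooses a mixed strategy $D$ on the rows $\calG$. The expected payoff is
\[
\E_{T\sim D,\, i \sim Q}[\mathbbm{1}[i\in T]] = \sum_{T} D(T)\sum_{i\in T} Q(i).
\]
For fixed $Q$, the set player's best response can be taken pure, so $\min_D \E = \min_{T\in\calG}\sum_{i\in T}Q(i)$. Symmetrically, for fixed $D$ the cover player's best response is pure, so $\max_Q \E = \max_{i\in[n]}\Pr_{T\sim D}[i\in T]$. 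The minimax theorem gives $\max_Q\min_D = \min_D\max_Q$, which is exactly the claimed identity for $\Phi(\calG)$. I will assume $\calG\neq\emptyset$ so that both strategy simplices are nonempty and the extrema are attained by compactness. If desired, one could instead write both sides as a primal/dual LP pair (maximize $\tau$ subject to $\sum_{i\in T}Q(i)\ge\tau$ for all $T$, with $Q$ a distribution) and invoke strong LP duality. I will mention this but use the game formulation.

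Item 1 follows from the definition together with attainment of the max. If $\calG$ is $p$-covered via some $Q$, then $\min_T \sum_{i\in T}Q(i)\ge p$, so $\Phi(\calG)\ge p$. Conversely, if $\Phi(\calG)\ge p$, then a maximizing $Q$ (which exists since the simplex is compact and the objective is continuous) witnesses that $\calG$ is $p$-covered.

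For item 2, suppose $\calG$ is not $p$-covered. By item 1, $\Phi(\calG)<p$. By the identity, there is a distribution $D$ on $\calG$ with $\max_i \Pr_{T\sim D}[i\in T]=\Phi(\calG)<p$. Such a $D$ is supported on $\calG$ and is $p$-smooth (indeed strictly so).

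The only potential obstacle is bookkeeping rather than mathematics: the minimax theorem must be applied to a finite matrix with both strategy spaces nonempty, and one must justify that inner optimizations over mixed strategies reduce to pure strategies (by linearity). Both points are standard, so I expect the proof to be short.
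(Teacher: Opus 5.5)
Your proposal is correct and is essentially the paper's proof: both treat $\mathbf{1}[i\in T]$ as the payoff matrix of a finite zero-sum game between a distribution $Q$ on $[n]$ and a distribution $D$ on $\calG$, reduce the inner optimizations to pure strategies, and invoke von Neumann's minimax theorem. You additionally spell out items 1 and 2, using that the maximum defining $\Phi(\calG)$ is attained and assuming $\calG\neq\emptyset$; the paper leaves these steps implicit.
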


\begin{proof}
    Consider the following finite zero-sum game: the row player chooses $T \in \calG$ and the column player chooses $i \in [n]$, with payoff matrix $A(T, i):= \mathbf{1}[i \in T]$ to the column player. A mixed strategy for the column player is a distribution $Q$ on $[n]$, and its expected payoff against a pure row choice $T$ is $\sum_{i \in T}Q(i)$. The row player then respond with the worst expected payoff $\min_{T \in \calG} \sum_{i \in T}Q(i)$. Maximizing over all mixed strategy $Q$ gives $\Phi (\calG)$.

    A mixed strategy for the row player is a distribution $D$ on $\calG$, and the expected payoff against a pure column choice $i$ is $\Pr_{T \sim D}[i \in T]$. The column player best-responds with $\max_i \Pr_{T \sim D}[i \in T]$. The row player then minimizes this quantity.

    By von Neumann's minimax theorem \cite{Neu28},
    \[
    \max_{Q} \min_{T \in \calG} \sum_{i \in T}Q(i) = \min_{D} \max_{i \in [n]} \Pr_{T \in D}[i \in T],
    \]
    as desired. 
\end{proof}

Recall our goal is to obtain a $p$-cover of $\calF$. If our $\calF$ is already $p$-covered, then we are done. Otherwise, by \Cref{prop:minimax_smooth_cover} there has to exists a $p$-smooth distribution on $\calF$. Given such a $p$-smooth distribution, our next Lemma shows how we can obtain a $p$-covered subfamily from $ \calF$ by removing only a few sets provided every $p$-smooth distribution on $\calF$ does not have too much entropy.

\begin{proof}[Proof of  \Cref{thm:coverability_from_smoothness}]
    If $\calF$ is already $p$-covered, take $S = \emptyset$ and we are done. Otherwise by \Cref{prop:minimax_smooth_cover} (2), there exists at least one $p$-smooth distribution over $\calF$. Let 
    \[
    \calP := \{\nu \text{ distribution on }\calF: \nu \text{ is }p \text{-smooth}\}.
    \]
    Then $\calP$ is nonempty, compact and convex. Define $\| \nu \|_{\infty}:= \max_{T \in \calF} \nu (T)$. This is valid since $\calF$ is finite. Let $\tau^* := \min_{\nu \in \calP} \| \nu \|_{\infty}$. By compactness, the minimum is attained. Fix an optimizer $\nu^* \in \calP$
    and define
    \[
    \calS := \{T \in \calF: \nu^* (T) = \tau^*\}.
    \]
    We claim that $\calS$ is a family of sets satisfying the desired conditions. To see this, we need to show (1) $\calF \backslash \calS$ is $p$-covered and (2) $|\calS| \leq 2^H$. \\
    \\
    \textbf{Step 1: $\calF \backslash \calS$ is $p$-covered.} If $\calF \backslash \calS = \emptyset$, this is trivially true. Otherwise, suppose for contradiction that $\calF \backslash \calS$ is not $p$-covered. Then \Cref{prop:minimax_smooth_cover}(2) yields a $p$-smooth distribution $\mu$ supported on $\calF \backslash \calS$. For $\epsilon \in (0, 1)$, define $\nu_{\epsilon}:= (1-\epsilon)\nu^* + \epsilon \mu$. Since this is a convex combination of $p$-smooth distributions over $\calF$, we have $\nu_{\epsilon} \in \calP$. Now,
    \begin{itemize}
        \item For any $S \in \calS$, $\mu (S) = 0$ as $\mu$ is supported on $\calF \backslash \calS$. Therefore, $\nu_{\epsilon}(S) = (1-\epsilon)\tau^* < \tau^*$.
        \item For any $T \in \calF \backslash \calS$, we have $\nu^* (T) < \tau^*$ by the definition of $S$. Since $\calF$ is finite, the minimum gap $\delta:= \min_{T \in \calF \backslash \calS} (\tau^* - \nu^* (T))$ is strictly positive. As $\mu (T) \leq 1$ for all $T$, we find
        \[
        \nu_{\epsilon}(T) = (1-\epsilon)\nu^* (T) + \epsilon \mu (T) = \nu^* (T) + \epsilon (\mu (T) - \nu^*(T)) \leq \nu^* (T) + \epsilon.
        \]
        Therefore if we choose $\epsilon \in (0, \delta)$, we get  
        \[
        \nu_{\epsilon}(T) \leq \nu^* (T) + \epsilon <\nu^* (T) + \delta  \leq \tau^*
        \]
        for all $T \in \calF \backslash \calS$, contradicting the optimality of $\nu^*$. Hence, $\calF \backslash \calS$ must be $p$-covered.
    \end{itemize}
    \noindent \textbf{Step 2: $|\calS| \leq 2^H$.} Since $\nu^*$ assigns mass $\tau^*$ to each $S \in \calS$, we have $|\calS| \tau^* \leq 1$ which implies $|S| \leq 1 / \tau^*$. On the other hand, 
    \[
    H(\nu^*) = \sum_{T \in \calF} \nu^* (T) \log \frac{1}{\nu^* (T)} \geq  \sum_{T \in \calF} \nu^* (T) \log \frac{1}{\|\nu^* \|_{\infty}} = \log \frac{1}{\tau^*}.
    \]
    Since by assumption $H(\nu^*) \leq H$, this implies $\tau^* \geq 2^{-H}$. As a result, $|S| \leq 2^H$. 
    
\end{proof}

\subsection{Universe reduction}\label{subsec:universe_reduction}
We are now ready to prove \Cref{thm:universe_reduction}.  We start by making the following definition:

\begin{definition}[$(p, M)$-almost-covered]\label{definition:goodness} Let $\calF \subseteq 2^{[n]}$. We say $\calF$ is \emph{$(p, M)$-almost-covered} if for every $J \subseteq [n]$, there exists an exceptional set $\calS_J \subseteq \calF_J$ of size $|\calS_J| \leq M$ such that $\calF_J \backslash \calS_J$ is $p$-covered.

\end{definition}

\ignore{We need the following lemma in the proof of Theorem \ref{theorem:one-step-universe-reduction-good-sets}.
\begin{lemma}[existence of popular element]\label{lem:existence_of_popular_elements}
    Let $\calF \subseteq 2^{[n]}$ be a $(p, M)$-almost-covered family of sets. Given $I \subseteq [n]$, let $\calF':= \{S \in \calF: S \not\subset I\}$ be the sets not fully-covered by $I$. If $|\calF'| \geq 2M$, then there is some $i \in [n] \backslash I$ which is contained in at least $p/2$ fraction of sets in $\calF'$.
\end{lemma}

\begin{proof}
    Notice $\calF' = \calF_{\Bar{I}}$ \shachar{why? I don't think this is true}. Since $\calF$ is $(p, M)$-almost-covered, we know there exists an exceptional set $\calS_{\Bar{I}} \subseteq \calF_{\Bar{I}}$ with $|\calS_{\Bar{I}}| \leq M$ such that $\calF_{\Bar{I}} \backslash \calS_{\Bar{I}}$ is $p$-covered. Therefore, by the definition of $p$-coveredness,  there exists a distribution $Q$ on $\Bar{I}$ such that for all $S \in \calF_{\Bar{I}} \backslash \calS_{\Bar{I}}$, $\sum_{i \in S}Q(i) \geq p$. In particular,
    \[
    \sum_{S \in \calF_{\Bar{I}} \backslash \calS_{\Bar{I}}}\sum_{i \in S}Q(i) \geq p \cdot |\calF_{\Bar{I}} \backslash \calS_{\Bar{I}}| \geq p \cdot \frac{|\calF'|}{2}.
    \]
    On the other hand, switching the order of summation, we find \shachar{why use sum over 1s instead of set size?}
    \[
     p \cdot \frac{|\calF'|}{2} \leq \sum_{S \in \calF_{\Bar{I}} \backslash \calS_{\Bar{I}}}\sum_{i \in S}Q(i) = \sum_{i \in \Bar{I}}Q(i) \left(\sum_{S \in \calF_{\Bar{I}} \backslash \calS_{\Bar{I}}: i \in S}1 \right)
    \]
    Since $\sum_{i \in |I|}Q(i) = 1$, there must exist an $i \in \Bar{I}$ such that 
    \[
    \sum_{S \in \calF_{\Bar{I}} \backslash \calS_{\Bar{I}}: i \in S}1 \geq p \cdot \frac{|\calF'|}{2},
    \]
    as desired.
\end{proof}
}

We show that if a set family $\calF$ is almost-covered, then we can identify a small set of coordinates $I$ that covers most of the sets in $\calF$.

\begin{lemma}\label{lemma:one-step-universe-reduction-good-sets}
    Let $\calF \subseteq 2^{[n]}$ be a family of $w$-sets. Fix $p \in (0, 1), M \geq 1$ and $\delta \in (0, 1]$. If $\calF$ is $(p, M)$-almost-covered, then there exists an $I \subseteq [n]$ with $|I| \leq t $ such that 
    \[
    |\mathcal F_{\overline I}|
    \le
    \delta|\mathcal F|+tM.
    \]
    where $t:= \lceil \frac{2}{p}(w \ln 2) + \ln (1/\delta) \rceil$.
\end{lemma}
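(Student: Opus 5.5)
The plan is a randomized greedy construction. We add $t$ coordinates to $I$ one at a time. At each step we sample a coordinate from the cover distribution supplied by the $(p,M)$-almost-covered property, applied to the current complement. A potential argument then shows that most sets get their residual emptied. The $tM$ term absorbs the exceptional sets discarded along the way.

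\textbf{Construction.} Set $I_0=\emptyset$. For $j=1,\dots,t$, let $J_j:=[n]\setminus I_{j-1}$, so that $\calF_{J_j}=\calF_{\overline{I_{j-1}}}$ is the current family of residuals. By \Cref{definition:goodness} there is an exceptional family $\calS_{J_j}\subseteq\calF_{J_j}$ with $|\calS_{J_j}|\le M$, and a distribution $Q_j$ on $[n]$ with $\sum_{i\in T}Q_j(i)\ge p$ for every $T\in\calF_{J_j}\setminus\calS_{J_j}$. We may take $Q_j$ supported on $J_j$, since elements of $I_{j-1}$ lie in no residual. Sample $i_j\sim Q_j$ and set $I_j:=I_{j-1}\cup\{i_j\}$. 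Call $S\in\calF$ \emph{removed at step $j$} if its residual $S\setminus I_{j-1}$ lies in $\calS_{J_j}$ and $S$ was not removed earlier. Finally $I:=I_t$, so $|I|\le t$.

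\textbf{Counting removed sets.} Sets removed at step $j$ have at most $M$ distinct residuals at time $j-1$. Sets sharing a residual at time $j-1$ keep sharing it afterwards, because later steps only delete the same coordinates from both. Hence all removed sets contribute at most $tM$ distinct elements of $\calF_{\overline I}$. The remaining sets contribute the empty set (at most one element) together with the nonempty final residuals of never-removed sets. The stray $+1$ from the empty residual is harmless, and can be absorbed, for instance, into the $tM$ term since $M\ge1$ (or avoided by counting only nonempty residuals).

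\textbf{Potential for never-removed sets.} For each $S\in\calF$ define
\[
Y_S^{(j)}:=2^{|S\setminus I_j|}\cdot\mathbf 1[S\text{ not removed by step } j].
\]
Suppose $S$ is not removed at step $j$ and its residual $T=S\setminus I_{j-1}$ is nonempty. Then $\Pr[i_j\in T]\ge p$, and a hit halves $2^{|T|}$, so
\[
\E\bigl[Y_S^{(j)}\mid \text{past}\bigr]\le \Bigl(1-\tfrac p2\Bigr)Y_S^{(j-1)}.
\]
If $S$ is removed at step $j$, then $Y_S^{(j)}=0$. Iterating, any never-removed $S$ with nonempty final residual satisfies $Y_S^{(t)}\ge 2$. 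Its probability is therefore at most
\[
\tfrac12\,2^w\Bigl(1-\tfrac p2\Bigr)^t\le 2^{w}e^{-pt/2}\le\delta,
\]
provided $t\ge\frac2p\bigl(w\ln2+\ln(1/\delta)\bigr)$, which is what the definition of $t$ is meant to encode. The subtle point is that $Q_j$ depends adaptively on the past, but the inequality holds conditionally at each step, so this is a supermartingale bound.

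\textbf{Conclusion.} By linearity, the expected number of never-removed sets with nonempty final residual is at most $\delta|\calF|$, so some realization of the sampling achieves at most that many. Combined with the $tM$ bound on removed residuals, this gives $|\calF_{\overline I}|\le\delta|\calF|+tM$.

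The main obstacle is the bookkeeping rather than the potential itself. One must ensure that exceptional families chosen relative to residual classes, which vary with the adaptively chosen $J_j$, are charged only $M$ per step. One must also ensure that the coverage guarantee holds for the residual $T$ of every never-removed $S$ at every step. Both follow from working with residual classes throughout.
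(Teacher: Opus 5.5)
Your proposal is correct and is essentially the paper's proof. You sample each new coordinate from the cover distribution of the current trace $\calF_{\overline{I_{j-1}}}$, charge the at most $M$ exceptional residual classes per step to the $tM$ term, and use the potential $2^{|S\setminus I_j|}$, which contracts by $(1-p/2)$ per step in expectation. The paper tracks the deduplicated residual family rather than individual sets, which changes nothing.

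You are right that this needs $t\ge\frac{2}{p}\bigl(w\ln 2+\ln(1/\delta)\bigr)$. The stated $t$ only guarantees $2^w e^{-pt/2}\le\delta^{p/2}$, an algebra slip that the paper's proof shares; its later application uses a $t$ at least as large as the corrected one.

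Absorbing the stray empty residual into $tM$ is not justified as stated, but it is also unnecessary. Since $\emptyset$ can never be $p$-covered, it always lies in the exceptional family, so every never-removed set has a nonempty residual before each step. Hence $Y_S^{(t)}\ge 1$ for every never-removed $S$, and your bound $\E\bigl[\sum_S Y_S^{(t)}\bigr]\le 2^w(1-p/2)^t|\calF|\le\delta|\calF|$ already counts all of them, giving $\delta|\calF|+tM$ with no $+1$.
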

In words, the lemma is saying that there exists a small universe of size $t$ on which most sets in $\calF$ are preserved.

\begin{proof}
    Without loss of generality, we can assume $\calF$ is non-empty. Otherwise, we can take $I = \emptyset$ and the conclusion follows trivially. Since $\calF$ is $(p, M)$-almost-covered by assumption, we can fix a choice of witness $(\calS_J, Q_J)$ for each $J \subseteq [n]$ as in \Cref{definition:goodness}. That is, $\calS_J \subset \calF_J$ of size $|\calS_J|\le M$, and $Q_J$ is a distribution over $J$ that $p$-covers $\calF_J \setminus \calS_J$. We now describe a process that iteratively selects a set of important elements that cover most sets in $\calF$. Let $t_{\text{end}}$ be some stopping condition to be determined later. Then:
    \begin{itemize}
        \item Initialize $I_0 := \emptyset, J_0 = [n], \calS_{\text{removed}}^{(0)} = \emptyset$ and $\calF_0 = \calF$.
        \item For $j = 0,1,...,t_{\text{end}}-1$:
        \begin{itemize}
            \item Let $(\calS_{J_j}, Q_{J_j})$ denote the corresponding witness for $J_j$.
            \item Remove exceptions: $\calF_{j}^{-} := \calF_j \backslash \calS_{J_j}.$ 
            \item Sample $i_{j+1} \sim Q_{J_j}$ and add it: $I_{j+1} = I_j \cup \{i_{j+1}\}$. 
            \item Update $J_{j+1}:= [n] \backslash I_{j+1}$ and $\calS_{\text{removed}}^{(j+1)} = \calS_{\text{removed}}^{(j)} \cup \calS_{J_j}$. 
            \item Set $\calF_{j+1} = (\calF_{j}^-)_{J_{j+1}}
            $, and remove any empty sets or duplicate sets if some exist.
        \end{itemize}
    \end{itemize} 
    Now define the potential function\footnote{Since $A \in \calF_j$, we have $|A \cap J_j| = |A|$. Here we write $|A \cap J_j|$ to emphasize it is the size of an intersection.}: 
    \[
    \Phi_j := \sum_{A \in \calF_j}2^{|A \cap J_j|}.
    \]
    It is easy to see that $\Phi_j$ is decreasing in $j$ and $\Phi_j \geq 2 |\calF_j|$. Moreover, since $\calF$ is a family of $w$-sets, we know $\Phi_0 \leq |\calF| \cdot 2^w$. Let $t_{\text{end}}$ be the smallest $j$ such that $\E [\Phi_j] \leq \delta |\calF|$. We claim $t_{\text{end}}:= \min \{n, \lceil \frac{2}{p}(w \ln 2) + \ln (1/\delta) \rceil\}$.

    To this end, we first observe that the process can continue at iteration $j$ as long as $|\calF_j| \geq M$. Lets assume this is always the case before we finish (we will justify this assumption later).   Now condition on the choice of $i_1,\ldots,i_j$ so that $I_j, J_j, \calF_j$ are fixed. Let $A \in \calF_{j}^{-}$. Then we have $A \notin \calS_{J_j}$, which implies  $\Pr_{i_{j+1} \sim Q_{J_j}}[i_{j+1} \in A] \geq p$. Moreover, we know
    \[
    |A \cap J_{j+1}| = |A \cap J_{j}| - \mathbf{1}[i_{j+1} \in A].
    \]
    Therefore,
    \[
    \E_{i_{j+1} \sim Q_{J_j}} \left[2^{|A \cap J_{j+1}|}\Bigm| i_1,...,i_j \right] = \E[2^{|A \cap J_{j}| - \mathbf{1}[i_{j+1} \in A]}] = 2^{|A \cap J_{j}|} \cdot \E [2^{- \mathbf{1}[i_{j+1} \in A]}] \leq 2^{|A \cap J_{j}|} \cdot \left(1 - \frac{p}{2} \right).
    \]
    By linearity of expectation, we thus find
    \[
    \begin{split}
        \E_{i_{j+1 \sim Q_{J_j}}}\left[ \Phi_{j+1} \Bigm| i_1,...,i_j\right] &= \sum_{B \in \calF_{j+1}} \E_{i_{j+1 \sim Q_{J_j}}}\left[ 2^{|B \cap J_{j+1}|} \Bigm| i_1,...,i_j\right] \\
        &\leq  \sum_{A\in \calF_{j}^-} \E_{i_{j+1 \sim Q_{J_j}}}\left[ 2^{|A \cap J_{j+1}|} \Bigm| i_1,...,i_j\right] \\
        &\leq \sum_{A \in \calF_{j}^-}2^{|A \cap J_{j}|} \cdot \left(1 - \frac{p}{2} \right) \\
        &\leq \sum_{A \in \calF_j}2^{|A \cap J_j|} \cdot \left(1 - \frac{p}{2} \right) = \left(1 - \frac{p}{2} \right) \Phi_j,
    \end{split}
    \]
    where the first inequality holds since for every $B \in \calF_{j+1}$, there exists some $A \in \calF_{j}^-$ such that $B \subseteq A$. The last inequality holds since $\calF_{j}^-$ is a subset of $\calF_j$. Iterating and using $1-x \leq e^{-x}$, we obtain
    \[
    \E_{i_1,...,i_t} [\Phi_t] \leq \left(1 - \frac{p}{2} \right)^t \Phi_0 \leq \exp (-pt/2) \cdot (|\calF| \cdot 2^w).
    \]
    Setting $\delta = 2^w \cdot \exp (-pt_{\text{end}}/2)$ and solving for $t_{\text{end}}$, we get $t_{\text{end}} = \lceil \frac{2}{p}(w \ln 2) + \ln (1/\delta) \rceil$. Now by the first moment method, we know there exists a sequence of $I = \{i_1,...,i_{t_{\text{end}}}\}$ such that $\sum_{A \in \calF_{t_{\text{end}}}}2^{|A|} \leq \delta |\calF|$. Therefore, the number of sets in $\calF$ that are not covered by $I$ is
    \[
    |\calF_J| \leq |\calF_{t_{\text{end}}}| + |\calS_{\text{removed}}^{t_{\text{end}}}| \leq \delta |F| + t_{\text{end}}M.
    \]
    The desired result then follows since $I$ can have size at most $n$. Lastly, if during any iteration before termination we encounter the case where $|\calF_j| \leq M$ which clearly satisfies the statement. Hence, our assumption $|\calF_j| \geq M$ for all iterations is a valid one.    
\end{proof}

We have thus seen $(p, M)$-almost-coveredness is a desirable property as it allows us to reduce the size of the universe without affecting the size of the set family too much. It remains to understand how we can get such a nice property. It turns out we can obtain almost-coveredness from moonflower-freeness.
\begin{lemma}[Almost-coveredness from moonflower-freeness]\label{lem:WSF_free_sets_are_good} Let $\calF \subseteq 2^{[n]}$ be a $k$-moonflower-free family of $w$-sets. Then $\calF$ is $(p, 2^h)$-almost-covered where 
\[
h = h(w,k,p):=O\left(p \log(1/p) \log \binom{k+w \lceil 2/p \rceil}{k} \right).
\]
\end{lemma}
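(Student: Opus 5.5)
The plan is to unfold \Cref{definition:goodness} and verify the almost-covered condition one subset $J \subseteq [n]$ at a time, chaining three earlier results.

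Fix an arbitrary $J \subseteq [n]$ and consider the trace $\calF_J = \{S \cap J : S \in \calF\}$. First, by \Cref{lem:WSF_free_preserved_under_projection}, $\calF_J$ is again $k$-moonflower-free. Every set in it has size at most $|S| \le w$, so $\calF_J$ is a $k$-moonflower-free family of $w$-sets. The possible presence of the empty set in $\calF_J$ is harmless: it has weight zero under every $Q$, but it is only one set and can be absorbed into the exceptional family.

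Second, apply \Cref{thm:ent_bnd_p_smooth} to $\calF_J$. For $p \le 1/2$, every $p$-smooth distribution $D$ supported on $\calF_J$ satisfies
\[
H(D) \le O\!\left(p\log(1/p)\log\binom{k+w\lceil 2/p\rceil}{k}\right) = h .
\]
Here the implicit constant is absolute, because \Cref{thm:ent_bnd_p_smooth} is uniform in the family.

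Third, invoke \Cref{thm:coverability_from_smoothness} with this $H = h$ on the family $\calF_J$. This yields $\calS_J \subseteq \calF_J$ with $|\calS_J| \le 2^h$ such that $\calF_J \setminus \calS_J$ is $p$-covered. Since $J$ was arbitrary, this is exactly $(p, 2^h)$-almost-coveredness.

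Two minor points need care. The first is the range $p \in (1/2, 1)$, which is not covered by \Cref{thm:ent_bnd_p_smooth}. I would handle it by monotonicity: a $p$-smooth distribution is also $1/2$-smooth, so its entropy is bounded by the $p = 1/2$ value, which is $O(\log\binom{k+4w}{k})$. That is within a constant of $h(w,k,p)$, since $p\log(1/p)$ is bounded below away from $p = 1$ only if we interpret the $O(\cdot)$ suitably. Otherwise I would simply restrict the lemma to $p \le 1/2$, which is the regime used later.

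The second point is the empty set, and more generally whether the definition of $p$-covered is meant with $Q$ supported on $J$. Any $Q$ on $[n]$ can be pushed onto $J$ because all sets in $\calF_J$ lie in $J$, so this causes no trouble.

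The argument is otherwise a direct composition. The main (mild) obstacle is making sure the hypothesis of \Cref{thm:coverability_from_smoothness} holds for every $p$-smooth distribution on the trace, not only on $\calF$ itself. That is exactly why projection-invariance of moonflower-freeness is the key first step.
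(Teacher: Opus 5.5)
Your proposal is correct and matches the paper's proof: project to $\calF_J$ (still $k$-moonflower-free by \Cref{lem:WSF_free_preserved_under_projection}), bound the entropy of every $p$-smooth distribution on $\calF_J$ by $h$ via \Cref{thm:ent_bnd_p_smooth}, and then apply \Cref{thm:coverability_from_smoothness}, whose proof already places $\emptyset$ in $\calS_J$ when it is present. The one slip is your side remark on $p>1/2$: the monotonicity runs the other way ($1/2$-smooth implies $p$-smooth, not conversely), and in fact the statement fails for $p$ close to $1$, because there $h\to 0$ forces $|\calS_J|\le 1$, and, e.g., three disjoint singletons with $k=4$ then leave two disjoint sets that no $Q$ can both $p$-cover; so restricting to $p\le 1/2$, as the paper implicitly does, is the correct fix.
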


\begin{proof}
    Since $\calF$ is $k$-moonflower-free, by \Cref{lem:WSF_free_preserved_under_projection}, each projection $\calF_J$ is also $k$-moonflower-free. By \Cref{thm:ent_bnd_p_smooth}, every $p$-smooth distribution $D$ supported on $\calF_J$ must satisfy
    \[
     H(D) \leq O \left( p \log(1/p)  \log \binom{k+w \lceil 2/p \rceil}{k} \right).
    \]
  Now by  \Cref{thm:coverability_from_smoothness}, for each $J$ there exists an exceptional set $\calS_J \subseteq \calF_J$ with $|\calS_J| \leq 2^h$ such that $\calF_J \backslash \calS_J$ is $p$-covered. In particular, this implies $\calF$ is $(p, 2^h)$-almost-covered.
    
\end{proof}

We are now ready to prove \Cref{thm:universe_reduction}, restated below for convenience.

\begin{theorem}[Universe reduction, \Cref{thm:universe_reduction} restated]
    Let $\calF \subseteq 2^{[n]}$ be a $k$-moonflower-free family of $w$-sets. Then for any $\lambda \in (0, 1]$, there exists $I \subseteq [n]$ of size $|I| \leq O( \lambda^{-1}k L^3)$ satisfying $|\calF_{\bar{I}}| \leq \exp (\lambda w)$ where $L:= \log(kw / \lambda)$. 
\end{theorem}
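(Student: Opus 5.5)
The plan is to combine \Cref{lem:WSF_free_sets_are_good} with \Cref{lemma:one-step-universe-reduction-good-sets} in a single round, with $p$ tuned to $\lambda$, and to handle a degenerate regime separately using the support bound. The first cost to notice is the coordinate count. \Cref{lemma:one-step-universe-reduction-good-sets} returns $|I| \le t \approx \frac{2w\ln 2}{p} + \ln(1/\delta)$. To get $|I| = O(\lambda^{-1} k L^3)$ we therefore want $w/p \approx kL^{O(1)}/\lambda$, that is, $p \approx \lambda w/(k L^{c})$ for a suitable power $c$.

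\textbf{Choosing $p$.} By \Cref{lem:WSF_free_sets_are_good}, $\calF$ is $(p,2^h)$-almost-covered with $h = O\bigl(p\log(1/p)\log\binom{k+w\lceil 2/p\rceil}{k}\bigr)$. Standard estimates give $\log\binom{k+w\lceil 2/p\rceil}{k} \le O(k\log(kw/p))$. With $p = c_0\lambda w/(kL^2)$ for a small constant $c_0$, we have $\log(1/p) = O(L)$ and $\log(kw/p) = O(L)$, since $p \ge \lambda/(kL^2)$ using $w \ge 1$. This gives $h \le O(c_0)\,\lambda w$, so after fixing $c_0$ we get $M := 2^h \le \exp(\lambda w/4)$. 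We may assume $p \le 1/2$. If instead $\lambda w \gtrsim kL^2$, then $\exp(\lambda w)$ exceeds the global bound $|\calF| \le \binom{k+w-1}{w} \le (2(k+w))^{k}$ from \Cref{thm:moonflower_bnd}, and $I = \emptyset$ already works. For the same reason the hypothesis $p<1$ of \Cref{thm:coverability_from_smoothness} causes no trouble.

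\textbf{Choosing $\delta$ and the main regime.} By \Cref{thm:moonflower_bnd}, $|\calF| \le \binom{k+w-1}{w}$, so $\ln|\calF| \le O(\min(k,w)\,L)$. Take $\delta = \exp(\lambda w/2)/|\calF|$, which gives $\ln(1/\delta) \le O(kL)$. Applying \Cref{lemma:one-step-universe-reduction-good-sets} then yields $I$ with
\[
|I| \le t = O\Bigl(\frac{w}{p} + kL\Bigr) = O\Bigl(\frac{kL^2}{\lambda}\Bigr), \qquad |\calF_{\bar I}| \le e^{\lambda w/2} + t\, e^{\lambda w/4}.
\]
The second term is at most $e^{\lambda w}/2$ whenever $\lambda w \ge C\ln t$, which holds once $\lambda w \ge C' L$. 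In that regime we are done, with room to spare: $L^2 \le L^3$, and the extra factor of $L$ absorbs slack in the binomial estimates.

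\textbf{The degenerate regime and the main obstacle.} The hard part is the additive term $tM$ in \Cref{lemma:one-step-universe-reduction-good-sets}. It is harmless only when $\exp(\lambda w)$ dominates the polynomial factor $t$, so the case $\lambda w < C'L$ must be handled differently. There I would use \Cref{lem:moonflower_free_family_have_small_support}: take $I = \supp(\calF)$, which has $|I| \le (k-1)w \le C'kL/\lambda$, so that $\calF_{\bar I} \subseteq \{\emptyset\}$ and $|\calF_{\bar I}| \le 1 \le \exp(\lambda w)$. Combining the two regimes gives $|I| = O(\lambda^{-1}kL^3)$.

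The point I would double-check is the bound on $h$. Whether the $\log(1/p)$ factor together with $\log\binom{k+w\lceil 2/p\rceil}{k}$ costs $L^2$ or $L^3$ decides the final power of $L$. If a third logarithm appears, I would simply choose $p = c_0\lambda w/(kL^3)$, giving $t = O(kL^3/\lambda)$, which matches the statement.
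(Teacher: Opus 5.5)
Your argument is essentially the paper's proof. It uses the same $p = c\lambda w/(kL^2)$ with $h = O(pkL^2) \le \lambda w/2$ from \Cref{lem:WSF_free_sets_are_good}, a single application of \Cref{lemma:one-step-universe-reduction-good-sets}, the same escape to $I=\emptyset$ when $p$ would exceed $1/2$, and the same fallback $I=\supp(\calF)$ via \Cref{lem:moonflower_free_family_have_small_support} when $\lambda w = O(\log t)$. Your $\delta = e^{\lambda w/2}/|\calF|$ in place of the paper's $1/|\calF|$ changes nothing.

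One correction to your accounting. The value of $t$ as typeset in \Cref{lemma:one-step-universe-reduction-good-sets} has a misplaced parenthesis. Its proof solves $2^w e^{-pt/2} \le \delta$, which gives $t = \lceil \frac{2}{p}(w\ln 2 + \ln(1/\delta))\rceil$, so $\ln(1/\delta)$ is also divided by $p$. Your claimed $t = O(kL^2/\lambda)$ ``with room to spare'' is therefore not justified. Using your own bound $\ln(1/\delta) \le \ln|\calF| = O(wL)$ instead gives $t = O(wL/p) = O(kL^3/\lambda)$. This is exactly where the paper's third logarithm comes from; it does not come from $h$, which costs only $L^2$. So your fallback of putting $L^3$ into $p$ would make the bound worse, not better. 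The stated $O(\lambda^{-1}kL^3)$ follows from your argument with no other change.
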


\begin{proof}
    Assume without loss of generality that $|\calF| > \exp (\lambda w)$. Otherwise, take $I = \emptyset$ and we are done. Let $p:= c \lambda w / (k L^2)$ where $c$ is some sufficiently small constant, and; we have\footnote{This follows as $p \le \frac{c \log |\calF|}{k \log^2 (kw/\lambda)} \le \frac{c \log \binom{k+w}{k}}{k \log^2 (kw/\lambda)}\leq \frac{c k \log (e(1 + w/k))}{k \log^2 (kw/\lambda)} < 1/2$ for $c$ small enough.} $p < 1/2$. Let $r = \lceil 2/p \rceil$.
    By \Cref{lem:WSF_free_sets_are_good}, we know $\calF$ is $(p, 2^h)$-almost-covered with $h:= O( p \log(1/p) \cdot \log \binom{k+wr}{k})$. For our choice of $p$, we have $\log (1/p) = O(L)$  and
    \[
    \log \binom{k + wr}{k} \leq k \log \left(\frac{e(k+wr)}{k}\right) \leq O(k L).
    \]
    Consequently, $h \leq O(c\lambda w) \leq \lambda w / 2$ for $c$ a small enough constant. Applying \Cref{lemma:one-step-universe-reduction-good-sets} with $p:= c \lambda w / (k L^2)$, $M = 2^h$ and $\delta = 1/|\calF|$, we get that there exists $I \subseteq [n]$ such that $|I| \leq t:= \lceil \frac{2}{p}\log(|\calF|2^w)\rceil + \log |\calF|$ and $|\calF_{\bar{I}}| \leq t \cdot 2^h$. By the extremal moonflower bound (\Cref{thm:moonflower_bnd}), we get
    \[
    t \leq O \left(\frac{\log |\calF| + w}{p} + \log |\calF|\right) \leq O \left(\frac{\log \binom{k+w}{w} + w}{p}\right) \leq O\left(\frac{w L}{p}\right) \leq O\left( \frac{k L^3}{\lambda} \right),
    \]
    where the last inequality holds by definition of $p$.
    If $t < 2^{\lambda w / 2}$, then $|\calF_{\bar{I}}| \leq t \cdot 2^{\lambda w / 2} \leq \exp (\lambda w)$ and we are done. 

    Otherwise, we have $\lambda w \leq 2 \log t$. In this case we take $I:= \supp (\calF)$. Then $\calF_{\bar{I}} = \{\emptyset\}$. Moreover, by \Cref{lem:moonflower_free_family_have_small_support}, we know
    \[
        |I| \leq kw \leq \frac{2k \log t}{\lambda} \leq O\left(\frac{kL}{\lambda}\right) 
    \]
    which is even better than the claimed bound.
\end{proof}

\subsection{Universe reduction lower bounds}\label{subsec:universe_reduction_LB}
In this subsection, we prove lower bounds on the size of punctured sets needed for universe reduction. We first justify why the interesting regime of parameters is $k \gg \lambda w \gg 1$:
\begin{enumerate}
    \item If $k=O(\lambda w)$ then by the extremal moonflower bound, $|\calF| \le {k+w-1 \choose k} \le \exp(O(\lambda w \log(1/\lambda)))$ which is already close enough to the desired trace bound $\exp(\lambda w)$ by slightly adjusting $\lambda$.
    \item If $\lambda=O(1/w)$ then the desired trace bound $\exp(\lambda w)$ is $O(1)$; and the support of $\calF$ is $|\supp(\calF)|\le (k-1)w \le O(k/\lambda)$. So we can simply prune all elements in the support.
\end{enumerate}

\begin{lemma}[\Cref{lem:universe_reduction_lower_bnd}, restated]\label{lem:universe_reduction_lower_bnd_restated}
        Let $k, w \geq 2$ be positive integers and $\lambda \in (0,1]$. Assume that $k=\Omega(\lambda w)$ and $\lambda w = \Omega(1)$.  
    Then there exists a family of $w$-sets $\calF \subseteq 2^{[n]}$ which is $k$-moonflower-free such that any subset $I \subseteq [n]$ satisfying $|\calF_{\bar{I}}| \leq \exp (\lambda w)$ must have size $|I|=\Omega(\lambda^{-1} k)$.
\end{lemma}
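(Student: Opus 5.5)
The plan is to build $\calF$ as a product of many independent blocks. First I would show that the moonflower number adds up over the blocks. Then I would show that any $I$ with small trace must fully puncture (or at least halve) a constant fraction of the blocks.

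\textbf{Construction.} Split the universe into $m$ disjoint blocks $B_1,\ldots,B_m$, each of size $a$. Within each block take the family $\calF^{(j)}$ of all $s$-subsets of $B_j$. Let $\calF$ consist of all unions $S_1\cup\cdots\cup S_m$ with $S_j\in\calF^{(j)}$. Every set has size exactly $ms$, so I set $ms=w$, padding with $w-ms$ common dummy elements if divisibility fails; dummy elements change neither moonflowers nor trace sizes.

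\textbf{Moonflower-freeness.} Take a moonflower $A_1,\ldots,A_\ell$ in $\calF$. Each $A_i$ has a private element $x_i$, and I group the indices $i$ by the block containing $x_i$. Consider the indices whose private element lies in $B_j$. By the argument of \Cref{lem:WSF_free_preserved_under_projection}, the projections $A_i\cap B_j$ of these sets form a moonflower inside $\calF^{(j)}$. The family of $s$-subsets of an $a$-set has moonflower number at most $a-s+1$, by the same counting as in \Cref{lem:WSF_lower_bnd_restated}. Hence $\ell\le m(a-s+1)$. I choose $a-s+1=\lfloor (k-1)/m\rfloor$, which makes $\calF$ $k$-moonflower-free.

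\textbf{Trace lower bound.} Fix $I$. The map $S\mapsto S\setminus I$ factors over the blocks, so $|\calF_{\bar I}|=\prod_j |\calF^{(j)}_{\overline{I\cap B_j}}|$. Suppose block $j$ keeps at least $a/2$ elements outside $I$ and $a\ge 4s$. Then its factor is at least $\binom{a/2}{s}\ge 2^{s}$. So the condition $|\calF_{\bar I}|\le e^{\lambda w}$ forces at most $O(\lambda w/s)=O(\lambda m)$ blocks to be un-halved. For $\lambda$ below a small constant, $\Omega(m)$ blocks must lose at least $a/2$ elements each, giving $|I|\ge\Omega(ma)$.

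\textbf{Main obstacle.} With the constraint $m(a-s+1)<k$ and $a\ge 4s$, the bound $\Omega(ma)$ only yields $|I|=\Omega(k)$. To recover the extra factor $1/\lambda$ I must exploit that a block which is merely dented still carries many traces. Concretely, I would take blocks of width $s\approx 1/\lambda$, with $m\approx\lambda w$, and strengthen the per-block factor: a block that keeps $r$ free elements contributes roughly $\binom{r}{\le s}$ traces. Here $\log\binom{r}{\le s}$ should be compared with $\lambda w/m$. I would then optimize the split between $s$, $a$ and $m$, possibly using a union of several such product families at different scales, so that the cheapest way to reach trace $e^{\lambda w}$ costs about $k/\lambda$ elements. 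This parameter balancing is the step I expect to be delicate. The hypotheses $k=\Omega(\lambda w)$ and $\lambda w=\Omega(1)$ should be exactly what make $m\ge1$, $s\ge1$ and $a\ge 4s$ simultaneously achievable. If a single product family does not suffice, I would fall back to proving the weaker $\Omega(k)$ bound from the argument above and then try to boost it.
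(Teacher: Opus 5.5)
\textbf{There is a genuine gap.} Your proposal does not reach the statement. The difficulty you flag under ``Main obstacle'' is also not a matter of balancing $s,a,m$: no product-of-blocks family can force $|I|=\Omega(k/\lambda)$, even with block parameters varying across blocks.

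Moonflower numbers add over the blocks, so you need $m(a-s+1)\le k-1$, and the size budget gives $ms\le w$. Hence the non-dummy universe has size $ma=m(a-s+1)+m(s-1)<k+w$. Taking $I$ to be the union of all blocks leaves a single trace, so every such family is defeated by some $I$ of size $<k+w$.

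In the regime the lemma is about this is fatal. For example, $k=w$ and $\lambda=w^{-1/2}$ satisfy both hypotheses, and the target is $\Omega(w^{3/2})$. Your family can never force more than $2w$ punctures. Refining the count for ``dented'' blocks, taking $s\approx 1/\lambda$, or mixing scales cannot help. The bottleneck is structural: a product charges all blocks against the single size budget $w$, while each block's moonflower contribution is charged against $k$.

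\textbf{The missing idea} is to enlarge the universe without increasing the moonflower number. The paper does this by replication. Take $m=\lambda w$, $p=k+m-2$, and $\calG=\{A\subseteq[p]:|A|=m\}$, which is $k$-moonflower-free by \Cref{lem:WSF_lower_bnd_restated}. Then set $\calF=\{A\times[r]:A\in\calG\}$ with $r=1/\lambda$.

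Copies of one element always appear together, so a $k$-moonflower $A_1\times[r],\dots,A_k\times[r]$ in $\calF$ yields a $k$-moonflower $A_1,\dots,A_k$ in $\calG$. The sets have size $mr=w$, but the universe now has size $pr\approx k/\lambda$.

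For any $I$ and any layer $a\in[r]$, let $I_a=\{x:(x,a)\in I\}$. Then $|\calF_{\bar I}|\ge|\calG_{\overline{I_a}}|\ge\binom{p-|I_a|}{m}$, and this is at least $e^{m}$ whenever $p-|I_a|\ge em$. So $|I_a|\ge k-O(m)$ in every layer, and summing over the $r$ layers gives $|I|\ge r(k-O(m))=k/\lambda-O(w)=\Omega(k/\lambda)$ once $k\ge C\lambda w$.

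Your disjoint-blocks instinct can also be repaired by taking a disjoint \emph{union} instead of a product. Let $d=\lceil 4\lambda w\rceil$ and take $\lfloor (k-1)/(d+1)\rfloor$ disjoint copies of all $w$-subsets of a $(w+d)$-set. This family is $k$-moonflower-free. Any block with at most $w/2$ punctured elements still yields at least $\binom{g+d}{g}\ge(1+2d/w)^{w/2}>e^{\lambda w}$ traces, where $g=w-|I\cap X_i|\ge w/2$. So every block needs more than $w/2$ punctures, giving $|I|=\Omega(k/\lambda)$. This works because each set lives in a single block, so the size budget $w$ applies per block rather than to the whole family.
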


\begin{proof}
    Define $r := 1/\lambda$ and $m:= \lambda w$, and assume without loss of generality that $r,m$ are integers, by changing $\lambda, w$ by constant factors if needed. Let $p:= k+m-2$ and $\calG$ be the family of all subsets of $[p]$ of size $m$. We saw in \Cref{lem:WSF_lower_bnd_restated} that $\calG$ is $k$-moonflower free.
    Next, consider the set family $\calF$ defined as $\calF:= \{A \times [r]: A \in \calG\} \subseteq 2^{[p] \times [r]}$, and note that all sets in $\calF$ have size $mr \le w$. We claim that $\calF$ is also $k$-moonflower free. Indeed, if $S_1,\ldots,S_k$ form a $k$-moonflower in $\calF$, then by construction $S_i = A_i \times [r]$ and $A_1,\ldots,A_k \in \calG$ also form a $k$-moonflower, a contradiction.

    Next, consider an arbitrary set $I \subseteq [p] \times [r]$. For each $a \in [r]$, define $I_a:= \{x \in [p]: (x, a) \in I\}$. Notice that $|\calF_{\bar{I}}| \geq |\calG_{\bar{I_a}}|$ for all $a \in [r]$. Hence, if $|\calF_{\bar{I}}| \leq \exp (m)$, then we necessarily have $|\calG_{\bar{I_a}}| \leq \exp (m)$ for all $a \in [r]$. 

    On the other hand, let $A \subseteq [p]$. Then by the definition of $\calG$, $\calG_{\bar{A}}$ contains all subsets of size $m$ from the remaining $p - |A|$ coordinates. Hence, $|\calG_{\bar{A}}| \geq \binom{p-|A|}{m} \geq \left(\frac{p-|A|}{m}\right)^m \geq \exp (m)$ provided $p-|A| \geq em$. Therefore, if  $|\calG_{\bar{A}}| \leq \exp (m)$, we must have $p- |A| < em$. Equivalently, $|A| > k- (e-1)m - 2$.

    Applying this with $A = I_{a}$, we know $|I_a| \geq k - (e-1)m - 2$ for all $a \in [r]$. As a result,
    \[
    |I| = \sum_{a \in [r]}|I_a| \geq r(k - (e-1)m - 2) \geq \frac{k}{\lambda} - O(w),
    \]
    where the last inequality follows by our choice of $r$ and $m$. As we assume $k = \Omega(\lambda w)$, we conclude that $|I|=\Omega(k/\lambda)$. 
\end{proof}

\ignore{
     Consider the set family $\calF:= \{B_i\}_{i=1}^{k-1}$ such that $B_i \cap B_j = \emptyset$ for all distinct $i, j \in [k-1]$. Then clearly, $\calF$ is $k$-moonflower-free. Notice that given any $I \subseteq [n]$, 
    \[
        |\calF_{\bar{I}}| = \sum_{i=1}^{k-1}\mathbf{1}[B_i \text{ not contained in }I] \geq (k-1) - \frac{|I|}{w}.
    \]
    If $|\calF_{\bar{I}}| \leq \exp (\lambda w)$, then we must have $|I| \geq w(k-1-\exp(\lambda w))$.
}

\ignore{
\subsection{Universe reduction}\label{subsec:universe_reduction}

In this subsection, we prove our main universe reduction theorem. 

\begin{theorem}[\Cref{thm:main_sampling_thm}, restated]\label{lem:main_sampling_thm}
    Let $\calF \subseteq 2^{[n]}$ be a family of $w$-sets which is $k$-moonflower-free. For every $t \geq 2$ and every $\eta \in (0, 1)$, there exists a set $I \subseteq [n]$ with 
    \[
    |I| \leq O \left(\frac{t}{\eta} \log \binom{k+w-1}{w} \right)
    \]
    such that
    \[
    |\calF_{\bar{I}}| \leq O \left(\frac{1}{\eta} \cdot \exp \left(O(\eta w) + O \left(\frac{\log t}{t} \log \binom{k+tw-1}{tw} \right) \right) \right).
    \]
    In particular, if $k = \Theta (w)$, we can identify a set $I$ of size $I = O(k)$ such that $|\calF_{\bar{I}}| \leq 2^{k/10}$.
\end{theorem}

For the remainder of this subsection, fix some $t \geq 2$ and $\eta \in (0, 1)$. To prove the theorem, we need the following weighted popularity lemma.

\begin{lemma}[Weighted popularity from bounded weight range]\label{lem:weighted_popularity_bounded_weight_range} Let $\calF \subseteq 2^{[n]}$ be a $k$-moonflower-free family of sets with $r - \Delta \leq |S| \leq r$ for all $S \in \calF$. Let $\mu_{\calF}$ be the exponentially weighted distribution over $\calF$ such that $\mu_{\calF} (S) \propto 2^{|S|}$ for all $S \in \calF$. If $|\calF| > 2^{\Delta} N_r (t)$, then there exists an element $i \in [n]$ such that
\[
\Pr_{S \sim \mu_{\calF}}[i \in S] \geq \frac{1}{t}.
\]
Here $N_r(t):= \exp (C \cdot \frac{\log t}{t} \log \binom{k+tr-1}{tr})$.    
\end{lemma}

\begin{proof}
    Suppose for contradiction that $\mu_{\calF}$ is $(1/t)$-smooth. Let $A_1,\ldots,A_t$ be i.i.d samples from $\mu_{\calF}$. By \Cref{lemma:entropy_lemma}, we have
    \[
     H \left(\bigvee_{i=1}^t A_i \right) \geq c_1 H(\mu_{\calF}) \cdot \frac{t}{\log t}
    \]for some constant $c_1$. On the other hand, since $A_1 \cup \cdots \cup A_t \in \calU_t (\calF)$, we know $H(A_1 \cup \cdots \cup A_t) \leq \log |\calU_t (\calF)|$. By \Cref{lem:moonflower_free_preserved_under_union}, $\calU_t (\calF)$ is also $k$-moonflower-free. We can then apply the extremal moonflower bound (\Cref{thm:moonflower_bnd}). This gives $|\calU_t (\calF)| \leq \binom{k+tr-1}{tr}$. Combining the above, we thus have
    \[
    H(\mu_{\calF}) \leq \frac{1}{c_1} \frac{\log t}{t} \log \binom{k+tr-1}{tr}.
    \]
    It remains to lower bound $H(\mu_{\calF})$ in terms of $|\calF|$. To do this, notice that 
    \[
    \mu_{\calF}(S) = \frac{2^{|S|}}{ \sum_{T \in \calF}2^{|T|}} \leq \frac{2^r}{|\calF|2^{r-\Delta}} = \frac{2^{\Delta}}{|\calF|}
    \]
    where the inequality holds since $|S| \in [r-\Delta, r]$ for all sets $S \in \calF$. As a result,
    \[
    H(\mu_{\calF}) = \E_{S \sim \mu_{\calF}}\left[\log \frac{1}{\mu_{\calF}(S)}\right] \geq \log |\calF| - \Delta.
    \]
    This implies 
    \[
    \log |\calF| \leq \Delta + \frac{1}{c_1} \cdot \frac{\log t}{t} \cdot \log \binom{k+tr-1}{tr`}
    \]
    which is a contradiction for $C = 1/c_1$. We can then conclude that $\calF$ is not $(1/t)$-smooth, as desired.
\end{proof}

Using this, we prove the following one-step weight dropping lemma.

\begin{lemma}[One-step weight dropping]\label{lem:one-step-weight-dropping}
    Let $\calF$ be a $k$-moonflower-free family of $r$-sets. Fix some $\Delta \in [1, r]$. Then there exists a set $J \subseteq [n]$ with $|J| \leq O(t \log |\calF|)$ such that 
    \begin{equation}\label{eqn:one-step-weight-dropping}
        |\{S \setminus J: S \in \calF, |S \setminus J| \geq r - \Delta\}| \leq 2^{\Delta} N_r (t). 
    \end{equation}
\end{lemma}
In words, after puncturing the coordinates in $J$, only few large sets remain.

\begin{proof}
    We maintain a punctured coordinate set $J$, initially set to be $\emptyset$ and a current trace family $\calF_{\Bar{J}}:= \{S \setminus J: S \in \calF\}$. Let $\calG_J: = \{T \in \calF_{\Bar{J}}: |T| > r - \Delta\}$. We repeat the following procedure as long as $|\calG_J| > 2^{\Delta} N_r (t)$: let $i$ be the coordinate such that $\Pr_{S \sim \mu_{\calG_J}}[i \in S] \geq 1/t$ where $\mu_{\calG_J}(S) \propto 2^{|S|}$. Such a coordinate must exist by \Cref{lem:weighted_popularity_bounded_weight_range}. We add $i$ to $J$, puncture $i$ and repeat after moving duplicate traces.

    Notice that \eqref{eqn:one-step-weight-dropping} holds by the definition of the process. It remains to bound the number of puncturing steps. To do this, consider the potential function
    \[
    \Phi (J):= \sum_{S \in \calG_J}2^{|S|}.
    \]
    Let $J' = J \cup \{i\}$, where $i$ is the popular coordinate chosen. Since $i$ is $(1/t)$-popular under $\mu_{\calG_J}$, we have
    \[
    \Phi (J') = \sum_{S \in \calG_{J'}}2^{|S|} \leq \Phi(J) - \frac{1}{2}\sum_{S \in \calG_J: i \in S}2^{|S|} \leq \left(1 - \frac{1}{2t}\right) \Phi (J)
    \]
    where the last inequality holds since $\Pr_{S \sim \mu_{\calG_J}}[i \in S] \geq 1/t$. Notice that initially, we have $\Phi (\emptyset) \leq 2^{r}|\calF|$. On the other hand,  during every step of the process, we have $\calG_J > 2^{\Delta}N_r(t)$ and every set in $\calG_J$ has size $> r - \Delta$. This implies $\Phi (J) > 2^{\Delta} N_r (t) \cdot 2^{r- \Delta} = 2^r N_r (t)$. Combining the above, we find that the total number of puncturing steps is at most 
    \[
    O \left(t \log \frac{2^r |\calF|}{2^r N_r (t)} \right) = O \left(t \log \frac{|\calF|}{N_r(t)} \right) \leq O(t \log |\calF|),
    \]
    as desired.
\end{proof}

Now we are ready to formally prove \Cref{thm:main_sampling_thm}. 

\begin{proof}
    (of \Cref{thm:main_sampling_thm}) We iteratively apply \Cref{lem:one-step-weight-dropping}. To this end, let $\eta \in (0,1)$ and set $\Delta:= \lceil \eta w \rceil$. Let $L:= \lceil w / \Delta \rceil \leq O(1/ \eta)$. We process the weight levels from top to bottom. At stage $q = 0,1,\ldots,L-1$, let $r_q:= w - q \Delta$. 

    Apply \Cref{lem:one-step-weight-dropping} to the current unprocessed trace family, consisting only of sets of size at most $r_q$. Denote this family as $\calG_q$. The lemma punctures $O(t \log |\calG_q|)$ coordinates and leaves behind at most $2^{\Delta}N_{r_q}(t)$ traces of size larger than $r_q - \Delta$. We set aside these remaining large traces and continue with the traces of size at most $r_q - \Delta$. We claim that at the end of this process, the punctured set satisfies the desired guarantees.

    To see this, we first bound the size of the punctured set $I$. For each weight level $q$, at most $O(t \log |\calG_q|)$ coordinates are included in $I$. Since every intermediate trace family is a trace of $\calF$, it remains $k$-moonflower-free by \Cref{lem:WSF_free_preserved_under_projection}. We can thus apply \Cref{thm:moonflower_bnd} and obtain
    \[
    |\calG_q| \leq \binom{k+w-1}{w}.
    \]
    Since there are $L = O(1/ \eta)$ states, the total number of punctured coordinates is
    \[
    |I| \leq O(1/\eta) \cdot O\left( t \log \binom{k+w-1}{w}\right) = O\left(\frac{t}{\eta} \log \binom{k+w-1}{w} \right).
    \]
    We now bound the trace size\footnote{It is worth noting that the large traces set aside in the earlier stages may change in later stages as more coordinates are punctured. But its size would not increase, justifying the trace bound}. Since $N_{r_q}(t) \leq N_w (t)$ for all $q$, the total number of leftover traces is at most 
    \[
    1 + \sum_{q=0}^{L-1}2^{\Delta} N_{r_q}(t) \leq 1 + L2^{\Delta}N_w(t),
    \]
    we include the $1$ here to account for potential empty sets. Substituting in $L = \lceil 1 / \eta \rceil$ and $\Delta = \lceil \eta w \rceil$ and simplifying, we have
    \[
    |\calF_{\bar{I}}| \leq O \left(\frac{1}{\eta} \cdot \exp \left(O(\eta w) + O \left(\frac{\log t}{t} \log \binom{k+tw-1}{tw} \right) \right) \right).
    \]
    Finally, if $k = c w$ for some constant $c$, we have $\log \binom{k+tw-1}{tw} \leq O(w \log t)$. Choosing $t$ to be a sufficiently large constant and $\eta$ sufficiently small as a constant depending only on $c$ and $t$, we have $|\calF_{\bar{I}}| \leq 2^{k/10}$ and $|I| = O(k)$.
\end{proof}
}

%% file: newsparsification.tex


In this section, we refine the sparsification analysis from~\cite{BG25}.
The only dependence on the ambient blocklength $n$ is the \emph{single} $\log n$ factor
coming from the recursion depth.
All additional losses are $\poly(\log(k/\varepsilon),\log\log n)$.

Throughout, let $\calC\subseteq\{0,1\}^n$ be a code with $\mathrm{NRD}(\calC) \le k-1$.
Equivalently, the support family $\Supp(\calC):=\{\supp(x):x\in\calC\}$ is
$k$-moonflower-free by \Cref{lem:equivalence_between_NRD_and_WSF}.

A \emph{weighted coordinate set} is a pair $(T,\alpha)$ where $T\subseteq[n]$ and
$\alpha:T\to\mathbb{R}_{\ge 0}$. It induces the estimator
\[
\widehat{\mathrm{wt}}_{T,\alpha}(x)\ :=\ \sum_{i\in T}\alpha(i)\,x_i,
\qquad x\in\{0,1\}^n.
\]
We say $(T,\alpha)$ \emph{$\varepsilon$-sparsifies} $\calC$ if
$\widehat{\mathrm{wt}}_{T,\alpha}(x)\in (1\pm\varepsilon)\mathrm{wt}(x)$ for all $x\in\calC$ where $\mathrm{wt}(x)=\sum_{i=1}^n x_i$. Such a pair $(T, \alpha)$ is called an \emph{$\varepsilon$-sparsifier} of $\calC$. Our main sparsification result is as follows.
\begin{theorem}[Improved sparsification with a single $\log n$ factor]
\label{thm:sparsif_single_log_n_v2}
Let $\calC\subseteq\{0,1\}^n$ satisfy $\mathrm{NRD}(\calC)\le k-1$.
Then for every $\varepsilon\in(0,1/4)$ there exists a weighted coordinate set $(T,\alpha)$
that $\varepsilon$-sparsifies $\calC$ and satisfies
\[
|T|
\ \le\
\frac{k\log n}{\varepsilon^2}\cdot \poly\!\left(\log(k/\varepsilon),\log\log n\right).
\]
\end{theorem}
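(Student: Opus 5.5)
The construction will be recursive, one round per halving of the active coordinate set, with about $\log n$ rounds. Each round does three things: it keeps all coordinates supporting low-weight codewords, it keeps a small universe-reduction set for each medium dyadic weight band, and it hands everything else to a random half-sample with doubled weight. Write $\calC^{(0)}=\calC$ on active set $S_0=[n]$. In round $\ell$ the code is $\calC^{(\ell)}=\calC|_{S_\ell}$, and its support family is still $k$-moonflower-free by \Cref{lem:WSF_free_preserved_under_projection}.

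Fix thresholds $w_{\low}=\Theta(\varepsilon^{-2}\log(\text{stuff}))$ and $w_{\high}$, to be chosen below.

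\textbf{Low weights.} Put $\bigcup_{c\in\calC^{(\ell)}_{\le w_{\low}}}\supp(c)$ into $T$ permanently with the current weight $2^\ell$. By \Cref{lem:moonflower_free_family_have_small_support} this costs at most $(k-1)w_{\low}$ coordinates per round.

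\textbf{Medium weights.} For each dyadic band $[w,2w]$ with $w_{\low}\le w\le w_{\high}$, apply \Cref{thm:universe_reduction} to the family of supports in that band. The parameter $\lambda$ is chosen so that $\exp(\lambda w)$ traces survive, with $\lambda\approx \varepsilon^2/\poly\log$. This yields a set $I_w$ of size $O(\lambda^{-1}kL^3)=k\varepsilon^{-2}\poly\log(k/\varepsilon)$, and we add $I_w$ to $T$ with weight $2^\ell$. There are $O(\log(w_{\high}/w_{\low}))$ bands, and this count is where a $\log\log n$ factor can enter.

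\textbf{Sampling step.} Each remaining coordinate enters $S_{\ell+1}$ independently with probability $1/2$. After $O(\log n)$ rounds the active set has constant size and we keep it entirely. The final weight of a coordinate is $2^\ell$, where $\ell$ is the round in which it was fixed. The size of $T$ is therefore $\log n$ times the per-round cost, which gives the claimed bound.

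\textbf{Error control.} For a codeword $c$, its estimated weight is a sum of contributions fixed deterministically in each round plus the rescaled random remainder. Consider one round. If the weight $|c|_{S_\ell}|$ of $c$ on the active set is at most $w_{\low}$, its remainder is frozen exactly.

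If the active weight lies in a medium band $[w,2w]$, the deviation of the sampled part depends only on $c$'s trace outside $I_w$. There are at most $e^{\lambda w}$ such traces. By \Cref{lem:sparsif_chernoff_v2}, each deviates by more than $\Delta$ with probability at most $2\exp(-\Delta^2/3|\text{trace}|)$. We take $\Delta=\varepsilon' w$ with per-round error budget $\varepsilon'=\varepsilon/\sqrt{\log n}$ or similar. The union bound then needs $\lambda w\ll \varepsilon'^2 w$, and this is exactly what forces $\lambda\approx\varepsilon^2$ up to logs.

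If the active weight exceeds $w_{\high}$, we instead union bound over all of $\calC^{(\ell)}_{\le 2w}$ in each higher dyadic band. By \Cref{thm:moonflower_bnd} there are at most $\binom{k+2w}{k}\le \exp(k\log(e(1+2w/k)))$ such codewords. The Chernoff exponent is $\varepsilon'^2 w$, so choosing $w_{\high}\approx k\varepsilon'^{-2}\poly\log$ makes $\varepsilon'^2w\gg k\log(w/k)$ for all $w\ge w_{\high}$, and the failure probabilities sum geometrically over bands.

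\textbf{Accumulating errors.} Across rounds the errors are martingale-like. A codeword's active weight at most halves per round, and rescaling by $2^\ell$ keeps the deviation relative to the original weight. I will bound the total relative error by a sum over rounds, which is at most $\varepsilon$ if $\varepsilon'$ is chosen with a $\log n$ (or better, geometric) split.

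\textbf{Main obstacle.} I expect the hardest part to be making this per-round bookkeeping lose only $\poly(\log\log n)$ rather than a $\log n$ factor in $\varepsilon^{-2}$. A naive split $\varepsilon'=\varepsilon/\log n$ would cost $\log^2 n$.

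The idea I would try first is that a codeword contributes nontrivial random error only in rounds where its active weight is at least $w_{\low}$. The deviation in round $\ell$ scales like $2^\ell\sqrt{|c|_{S_\ell}|}\approx\sqrt{2^\ell|c|}$. Deviations from successive rounds therefore add as independent variances, and the total relative variance is about $\sum_\ell 2^\ell/|c|$. That sum is dominated by the last active round, where $2^\ell|c|_{S_\ell}|\approx|c|$ and $|c|_{S_\ell}|\ge w_{\low}$, so it is $O(1/w_{\low})$.

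This requires arguing the concentration jointly, through a single Chernoff bound on the composed sampling process, rather than through round-by-round union bounds. The union-bound count must then be handled per round, on the traces identified in that round. I would set $w_{\low}=\varepsilon^{-2}\cdot O(\log(k/\varepsilon)+\log\log n)$. The $\log\log n$ term is needed to absorb a union bound over rounds and bands.
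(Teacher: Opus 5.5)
Your overall architecture is the paper's: capture low-weight supports via \Cref{lem:moonflower_free_family_have_small_support}, apply universe reduction on each medium dyadic band, and on high bands puncture nothing and union-bound via \Cref{thm:moonflower_bnd}, halving with weights $2^r$ over $\lceil\log_2 n\rceil$ rounds. However, the step that makes the theorem true, controlling error accumulation across rounds, is left unresolved, and your concrete choices do not work. You use one round-uniform budget $\varepsilon'$ for both medium and high bands and tie the puncturing parameter to it via $\lambda\ll\varepsilon'^2$. The per-round guarantees come from union bounds over the traces present in that round, so what you control is a worst-case deviation $\epsilon_r w_r$ per round. These deviations add linearly along a codeword's trajectory, and a heavy codeword stays above $w_{\high}$ for $\Theta(\log n)$ rounds. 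With $\varepsilon'=\varepsilon/\sqrt{\log n}$ the accumulated error can reach $\varepsilon\sqrt{\log n}$. With $\varepsilon'=\varepsilon/\log n$ you get $\lambda\approx\varepsilon^2/\log^2 n$, and the universe-reduction sets then only give $|T|$ of order $k\log^3 n/\varepsilon^2$. Your variance-addition rescue does not fit this framework, because it describes a single fixed codeword. A joint tail bound over all rounds would have to be union-bounded over codewords (or whole trace trajectories) rather than over one round's traces. But $\log|\calC|$ can be of order $k\log(n/k)$, far beyond the available exponent $\varepsilon^2 w_{\low}$. (Also, the useful fact is that the active weight at least halves each round, up to a factor $1+\epsilon_r$, not that it at most halves.)

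The missing idea is to index the per-round error by weight band rather than by round. Since $w_{r+1}\le(1+\epsilon_r)w_r/2\le\frac58 w_r$, a codeword spends only $O(\log w_{\high})$ rounds in $(w_{\low},w_{\high}]$. So the medium budget can be $\eta_0=\varepsilon/(100\log(2w_{\high}))$ with $\lambda=\theta_0\eta_0^2$, which costs only a $\poly(\log(k/\varepsilon),\log\log n)$ factor in $|I_{r,w}|$. Correspondingly, $w_{\low}$ must be $\Theta(\eta_0^{-2}(\log(k/\varepsilon)+\log R))$ rather than $\varepsilon^{-2}(\cdots)$, so that the union bound in the lowest medium band still succeeds. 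On high bands nothing is punctured, so the error budget there costs nothing in $|T|$. Even a crude $\varepsilon/\log n$ would do, at the price of a larger but still polylogarithmic $w_{\high}$. The paper instead takes $\eta(w)=\sqrt{C(k\log(w/k)+\log R)/w}$, matched to the count $(C_0w/k)^k$. Its values along a trajectory form a geometric series dominated by $\eta(w_{\high})\le\varepsilon/50$, where $w_{\high}=\Theta((k\log(k/\varepsilon)+\log R)/\varepsilon^2)$. Conditioned on all per-round union bounds succeeding, one gets $\widehat N_{r+1}\in(1\pm\epsilon_r)\widehat N_r$ (using $2^r w_r\le\widehat N_r$) and $\sum_r\epsilon_r\le\varepsilon/25$, which gives the theorem.
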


\subsection{Proof of  \Cref{thm:sparsif_single_log_n_v2}}\label{subsec:proof_sparsification}

The proof of \Cref{thm:sparsif_single_log_n_v2} is a recursive ``puncture--then--halve'' process
run for $R=\lceil\log_2 n\rceil$ rounds. The main calculations to keep in mind are:

\begin{itemize}
\item 
Each round adds a puncturing set $I_r$ of size at most
$\frac{k}{\varepsilon^2}\cdot \poly(\log(k/\varepsilon),\log\log n)$.
Since there are $R=\Theta(\log n)$ rounds, the final puncturing set has size 
$|T| \le \frac{k \log n}{\varepsilon^2}\cdot \poly(\log(k/\varepsilon),\log\log n)$. The remaining coordinates will be small due to halving in each round. 
\item The puncturing is carried-out carefully in that for each weight range $w$, we choose a specific puncturing set $I_{r,w}$. For round $r$, let $\mathcal{F}^{(r)}_{(w,2w]}$ denote codewords whose weight in round $r$ is between $w, 2w$ (in the non-punctured portion). 

We ensure that the number of traces of this family outside $I_{r,w}$ is roughly $\exp(\eta_0^2 w)$ where $\eta_0 \approx \varepsilon$. This makes the union bound combined with Chernoff viable for the codewords whose weights are in $[w,2w]$: with good probability, randomly sampling half the remaining coordinates preserves the weight of all such codewords within a factor of $(1\pm \eta_0)$. The one caveat is that we need the failure probability to be at most $1/\poly(\log n)$ so we have a minimum weight threshold $w_{min}$ above which we use this argument. 

\item The above argument is still problematic as each round incurs a multiplicative error of $(1+\eta_0)$ and we have $O(\log n)$ rounds. The simple fix of taking $\eta_0 \ll \varepsilon/(\log n)$ would in turn blow up the support size to be $\poly(\log n)$ which we want to avoid. 

The key point is that the error in the Chernoff bound combined with union bound argument can be better for large weight codewords. We set a weight-threshold $w_{\mathrm{high}} \approx O((k \log (k/\epsilon) + \log \log n))/\epsilon^2$ and do the following. Pick a dyadic weight $w = 2^i w_{\mathrm{low}} > w_{\mathrm{high}}$. For such codewords, we do no puncturing, and instead union-bound directly over such codewords using our optimal moonflower bound. Essentially, the number of codewords whose weight is between $(w,2w]$ is at most $(Cw/k)^k$. This allows to say that with high probability, randomly picking half the remaining codewords preserves the weights up to error roughly 
$$\eta(w) = O(\sqrt{ (k \log (w/k) + \log \log n)/w}.$$

Note that the error decreases as $w$ increases. 

\item Error accumulation: Finally, we do an error analysis for each codeword conditioned on the union bounds succeeding across all rounds. Fix a codeword $x \in \mathcal{C}$ and let $w_r$ denote its weight among non-punctured coordinates in round $r$. Then, the multiplicative error, $1 \pm \epsilon_r$, incurred by the sampling process satisfies
$$
\epsilon_r := \begin{cases} 0 & w_r \leq w_{\low},\\
\eta_0 & w_{\low} < w_r \leq w_{\high}\\
\eta(w_r) & w_r > w_{\high}
\end{cases}
$$

In each round, the residual weight of a codeword drops by a constant factor as long as its weight is above $w_{\mathrm{low}}$. Thus, it crosses the medium region $(w_{\mathrm{low}}, w_{\mathrm{high}}]$ in only $O(\log w_{\mathrm{high}})$ rounds and the total contribution to the multiplicative error through these rounds is
$O(\eta_0\log w_{\mathrm{high}})$, and we set $\eta_0=\Theta(\varepsilon/\log w_{\mathrm{high}})$.

There can be several rounds where the codeword would be large weight (i.e, weight above $w_{\mathrm{high}}$). However, the error for these rounds is better, and we get a convergent geometric series for the errors here because of the improved bound on $\eta(w)$ above. 

\ignore{
\item \textbf{Medium vs.\ large weights (the key stratification).}
For \emph{medium weights} $w\le w_{\high}$, we enforce small trace complexity via puncturing and then union-bound
over traces. For \emph{large weights} $w>w_{\high}$, we do \emph{no puncturing} and instead union-bound directly
over codewords using the extremal bound
$|\calF^{(r)}_{(w,2w]}|\le (C w/k)^k$, while allowing a larger per-round error
$\eta(w)=\Theta(\sqrt{(k\log(e w/k)+\log R)/w})$ so that the Chernoff exponent
$\eta(w)^2 w$ matches $\log |\calF^{(r)}_{(w,2w]}|+\log R$.

\item \textbf{Error accumulation (why we do not set $\eta_0=\varepsilon/R$).}
For a fixed codeword, the residual weight drops by a constant factor each round
($W_{r+1}\approx W_r/2$), so it crosses the medium region $(w_{\min},w_{\high}]$ in only
$O(\log w_{\high})$ rounds. Thus the total medium contribution to the multiplicative error is
$O(\eta_0\log w_{\high})$, and we set $\eta_0=\Theta(\varepsilon/\log w_{\high})$.
Large-weight rounds are even fewer and contribute a convergent geometric series.
Formally this is captured by the recursion $\xi_r=\eta_r+(1+\eta_r)\xi_{r+1}$,
which implies $1+\xi_0=\prod_r(1+\eta_r)\le \exp(\sum_r \eta_r)$.}
\end{itemize}

\begin{proof}(of \Cref{thm:sparsif_single_log_n_v2})

\paragraph{Parameter choices.}
Fix $R:=\lceil \log_2 n\rceil$. In what follows, let $C$ be a large constant. Define the transition threshold \ignore{\shachar{we have 3 unspecified constants $C$ below, do we assume they are all the same? if not then lets call them $C_1,C_2,C_3$; this then needs to be propagated into the proof} \yimeng{I think we can assume they are the same. As long as it's large enough, the proof should go through.}\shachar{at least it should be clear if $C$ should be large enough or small enough. EG if we incrase $C$ does it keep everything consistent?}}
\[
w_{\mathrm{high}}\ :=\ \left\lceil
C\cdot \frac{k\log(k/\varepsilon)+\log R}{\varepsilon^2}
\right\rceil.
\]
Define the medium-weight per-round error
\[
\eta_0\ :=\ \frac{\varepsilon}{100\log(2w_{\mathrm{high}})}.
\]
Define the low-weight cutoff
\[
w_{\mathrm{low}}\ :=\ \left\lceil \frac{C}{\eta_0^2}\cdot\left(\log(k/\varepsilon)+\log R\right)\right\rceil.
\]
Finally, for dyadic $w>w_{\mathrm{high}}$, define the large-weight per-round error
\[
\eta(w)\ :=\ \min\left\{\frac14,\ \sqrt{\frac{C\,(k\log( w/k)+\log R)}{w}}\right\}.
\]

\paragraph{Recursive construction.}
Let $U_0=[n]$. For rounds $r=0,1,\dots,R-1$, given $U_r$ we choose a puncturing set $I_r\subseteq U_r$,
set $V_r:=U_r\setminus I_r$, and then form $U_{r+1}$ by including each element of $V_r$
independently with probability $1/2$.

For dyadic\footnote{Here by dyadic, we mean $w = 2^j w_{\low}$ for $j = 0,1,...,\log (w_{\high} / w_{\low})$.} $w\in[w_{\low},w_{\high}]$, define
\[
\calF^{(r)}_{(w,2w]}
:=
\left\{\,\supp(x)\cap U_r:\ x\in\calC,\ \ w<|\supp(x)\cap U_r|\le 2w\,\right\}.
\]
Define the low-weight family
\[
\calF^{(r)}_{\le w_{\low}}
:=
\left\{\,\supp(x)\cap U_r:\ x\in\calC,\ \ |\supp(x)\cap U_r|\le w_{\low}\,\right\}.
\]
(All these families are $k$-moonflower-free by \Cref{lem:WSF_free_preserved_under_projection}.)

\smallskip
\begin{enumerate}[(a)]
    \item \emph{Low weights}: capture deterministically. Let
    \[
    I_{r,\mathrm{low}}:=\supp\!\left(\calF^{(r)}_{\le w_{\low}}\right)\subseteq U_r.
    \]
    By \Cref{lem:moonflower_free_family_have_small_support}, $|I_{r,\mathrm{low}}|\le k\,w_{\low}$.
    \item \emph{Medium weights}: puncture to shrink traces. Choose a sufficiently small absolute constant $\theta_0 > 0$, say $\theta_0 = 10^{-3}$ after adjusting constants in
    the Chernoff bound. 
    For each dyadic $w\in[w_{\low},w_{\high}]$, apply \Cref{thm:universe_reduction}
    to $\calF^{(r)}_{(w,2w]}$ with parameter $\lambda = \theta_0 \eta_0^2$ to obtain $I_{r,w}\subseteq U_r$
    such that
    \begin{equation}\label{eq:medium_trace_bound_v2}
    \big|\left(\calF^{(r)}_{(w,2w]}\right)_{\overline{I_{r,w}}}\big|
    \ \le\
    \exp (\theta_0 \eta_0^2 w),
    \end{equation}
    and
    \[
    |I_{r, w}| \leq \frac{k}{\eta_0^2} \cdot \poly \left(\log \frac{kw}{\eta_0} \right).
    \]
    Here we use the fact that $\eta_0 < 1$ and $\theta_0$ is an absolute constant, so its contribution is absorbed into the polylogarithmic factor.
    \item \emph{Define $I_r$ and recurse.} Let
    \[
    I_r\ :=\ I_{r,\mathrm{low}} \ \cup\ \bigcup_{\substack{\text{dyadic }w\\ w_{\low}\le w\le w_{\high}}} I_{r,w},
    \qquad
    V_r:=U_r\setminus I_r,
    \]
    and sample $U_{r+1}\subseteq V_r$ by keeping each coordinate with probability $1/2$.
    \item \emph{Output.} Define
    \[
    T\ :=\ \left(\bigcup_{r=0}^{R-1} I_r\right)\ \cup\ U_R,
    \qquad
    \alpha(i):=2^r\ \text{ if } i\in I_r,\quad \alpha(i):=2^R\ \text{ if } i\in U_R.
    \]
    (The sets $I_0,\dots,I_{R-1},U_R$ are disjoint since $U_{r+1}\subseteq U_r\setminus I_r$.)
\end{enumerate}

\paragraph{Successful Sampling.}
Fix a round $r$ and condition on the entire history up to round $r$ so that $U_r,I_r,V_r$ are fixed.

\smallskip
\emph{Medium regime.}
Fix dyadic $w\in[w_{\low},w_{\high}]$. Consider the trace family on $V_r$:
\[
\left(\calF^{(r)}_{(w,2w]}\right)_{V_r}.
\]
Since $V_r\subseteq U_r\setminus I_{r,w}$, restriction cannot increase size, so
\[
\big|\left(\calF^{(r)}_{(w,2w]}\right)_{V_r}\big|
\le
\big|\left(\calF^{(r)}_{(w,2w]}\right)_{\overline{I_{r,w}}}\big| \leq \exp (\theta_0 \eta_0^2 w).
\]
Moreover, for any $A$ in this family, we have $|A|\le 2w$.
Applying \Cref{lem:sparsif_chernoff_v2} with $\Delta:=\eta_0 w$ gives 
\[
\Pr_{U_{r+1}}\left[\big|\,2|A\cap U_{r+1}|-|A|\,\big|>\eta_0 w\right]
\le
2\exp\!\left(-\Omega(\eta_0^2 w)\right).
\]
Using~\eqref{eq:medium_trace_bound_v2} and the definition of $w_{\low}$,
the union bound over all $A\in (\calF^{(r)}_{(w,2w]})_{V_r}$ fails with probability at most
$\exp(-\Omega(\eta_0^2 w))\le 1/(100R\log(2w_{\high}))$ for appropriate choice of constants $C, \theta_0$.

\smallskip
\emph{Large regime.}
Fix dyadic $w>w_{\high}$ and consider the trace family
\[
\calH^{(r)}_{(w,2w]}\ :=\ \left(\calF^{(r)}_{(w,2w]}\right)_{V_r}.
\]
Since $\calF^{(r)}_{(w,2w]}$ is $k$-moonflower-free and $w\ge k$ in this regime,
\Cref{main_wsf_theorem} gives 
\[
|\calH^{(r)}_{(w,2w]}|
\ \le\
|\calF^{(r)}_{(w,2w]}|
\ \le\
\left(\frac{C_0 w}{k}\right)^k,
\]
for some constant $C_0 > 0$.
For any $A\in\calH^{(r)}_{(w,2w]}$ we have $|A|\le 2w$, so by \Cref{lem:sparsif_chernoff_v2} with
$\Delta:=\eta(w)\,w$, 
\[
\Pr_{U_{r+1}}\left[\big|\,2|A\cap U_{r+1}|-|A|\,\big|>\eta(w)\,w\right]
\le
2\exp\!\left(-\Omega(\eta(w)^2 w)\right)
=
2\exp\!\left(-\Omega(k\log(e w/k)+\log R)\right).
\]
For large enough $C$ in the definition of $\eta(w)$, the union bound over all
$A\in\calH^{(r)}_{(w,2w]}$ fails with probability at most $1/(100R\cdot 2^{j+2})$
when $w=2^j$. Summing over dyadic $w>w_{\high}$ gives failure probability at most $1/(100R)$.

\smallskip
Combining medium and large regimes and summing over the $O(\log w_{\high})$ medium scales,
we obtain that conditioned on the past, round $r$ fails with probability at most $1/(50R)$.
A union bound over $r=0,\dots,R-1$ yields that all rounds succeed with probability at least $49/50$.
Also $\E[|U_R|]\le n2^{-R}\le 1$, hence $\Pr[|U_R|\le 50]\ge 49/50$ by Markov.
Intersecting gives overall success probability at least $2/3$.

\paragraph{Accuracy for a fixed codeword.}
Fix a successful outcome as above. Fix a codeword $x\in\calC$ with $S:=\supp(x)$.

Let $\widehat{N}_r = \sum_{i=0}^{r-1} 2^i |S \cap I_i| + 2^r |S \cap U_r|$ denote the weight estimate for the weight of $x$ at round $r$. Note that $\widehat{N}_0 = |S|$ is the Hamming weight of $x$. Our goal is to show that $\widehat{N}_R = (1\pm \varepsilon) \widehat{N}_0$. 

Consider the next weight estimate  $$\widehat{N}_{r+1} = \sum_{i=0}^{r-1} 2^i  |S \cap I_i| + 2^r (|S \cap I_r| + 2 |S \cap U_{r+1}|).$$

Let $w_r := |S \cap U_r|$ be the true residual weight at round $r$, and let 

$$
\epsilon_r := \begin{cases} 0 & w_r \leq w_{\low},\\
\eta_0 & w_{\low} < w_r \leq w_{\high}\\
\eta(w_r) & w_r > w_{\high}
\end{cases}.
$$

Then, as we are in a successful outcome, we have,
$$\left|2 |S \cap U_{r+1}| - |S \cap (U_r\setminus I_r)| \right| \leq \epsilon_r w_r.$$

Thus, 
$$\left| 2 |S \cap U_{r+1}| + |S \cap I_r|  - |S \cap U_r| \,\right| \leq \epsilon_r w_r.$$

Combining the above equations, we get 
$$|\widehat{N}_{r+1} - \widehat{N}_r| = 2^r \cdot \left| 2 |S \cap U_{r+1}| + |S \cap I_r|  - |S \cap U_r| \,\right| \leq \epsilon_r 2^r w_r \leq \epsilon_r \widehat{N}_r.$$

Thus, we have 
$$(1-\epsilon_r) \widehat{N}_r \leq \widehat{N}_{r+1} \leq (1+\epsilon_r) \widehat{N_r}.$$

Applying the above inequality for $r = 0,\ldots,R-1$, we get 
$$\widehat{N}_0  \cdot \prod_{r=0}^{R-1} (1-\epsilon_r) \leq \widehat{N}_R \leq \widehat{N}_0  \cdot \prod_{r=0}^{R-1} (1+\epsilon_r).$$

Further, as $\epsilon_r < 1/2$, the above can be simplified to 
\begin{equation}\label{eq:sparse1}
    \widehat{N}_0  \cdot \exp\left(-2 \sum_{r=0}^{R-1}\epsilon_r\right) \leq \widehat{N}_R \leq \widehat{N}_0  \cdot \exp\left( \sum_{r=0}^{R-1}\epsilon_r\right) .
\end{equation}

It remains to bound $\sum_r \epsilon_r$. The number of rounds with $w_{\low}<w_r\le w_{\high}$ is $O(\log(2w_{\high}))$ since
$w_{r+1}\le (1+\eta_0)w_r/2\le (3/5)w_r$.
Thus $\sum_{w_{\low}<w_r\le w_{\high}}\epsilon_r\le O(\eta_0\log(2w_{\high}))\le \varepsilon/50$.

For rounds with $w_r>w_{\high}$, we have $\epsilon_r\le \eta(w_r)$ and 
$w_{r+1}\le (1+\epsilon_r)w_r/2\le (5/8)w_r$. A geometric-series estimate gives
\[
\sum_{w_r>w_{\high}}\epsilon_r
\ \le\
O\!\left(\sqrt{\frac{k\log(e w_{\high}/k)+\log R}{w_{\high}}}\right)
\ \le\ \varepsilon/50
\]
by the definition of $w_{\high}$ (with $C$ large enough).

Combining the above we get that $\sum_r \epsilon_r \leq \varepsilon/25$. Thus, in particular we must have 
$\widehat{N}_R = (1 \pm \varepsilon) N_0$, as we wanted.

\ignore{
Let $W_r:=|S\cap U_r|$ be the true residual weight at round $r$.  
Define the estimator recursively:
\[
\widehat W_R:=|S\cap U_R|,
\qquad
\widehat W_r:=|S\cap I_r|+2\widehat W_{r+1}\quad (r=R-1,\dots,0).
\]
Then $\widehat W_0=\widehat{\mathrm{wt}}_{T,\alpha}(x)$.

Let $R_r:=S\cap V_r$, so $W_r=|S\cap I_r|+|R_r|$ and $W_{r+1}=|R_r\cap U_{r+1}|$.
If $W_r\le w_{\min}$ then $S\cap U_r\subseteq I_{r,\mathrm{tiny}}\subseteq I_r$, so $R_r=\emptyset$
and $W_{r+1}=0$ and $\widehat W_r=W_r$ exactly.

Otherwise, let dyadic $w$ satisfy $W_r\in(w,2w]$.
If $w\le w_{\high}$ then by the medium-regime guarantee we have
\[
\big|\,2W_{r+1}-|R_r|\,\big|\ \le\ \eta_0 w\ \le\ \eta_0 W_r.
\]
If $w>w_{\high}$ then by the large-regime guarantee we have
\[
\big|\,2W_{r+1}-|R_r|\,\big|\ \le\ \eta(w)\,w\ \le\ \eta(w)\,W_r.
\]
In either case we obtain
\begin{equation}\label{eq:eta_r_def_v2}
\big|\,2W_{r+1}-|R_r|\,\big|\ \le\ \eta_r\,W_r,
\qquad\text{where}\qquad
\eta_r:=\begin{cases}
0 & W_r\le w_{\min},\\
\eta_0 & w_{\min}<W_r\le w_{\high},\\
\eta(w) & W_r>w_{\high}.
\end{cases}
\end{equation}

\paragraph{Error propagation.}
Define $\xi_R:=0$ and for $r=R-1,\dots,0$ set
\[
\xi_r\ :=\ \eta_r+(1+\eta_r)\xi_{r+1}.
\]
We claim that for all $r$,
\begin{equation}\label{eq:xi_claim_v2}
(1-\xi_r)W_r\ \le\ \widehat W_r\ \le\ (1+\xi_r)W_r.
\end{equation}
We prove this by backward induction on $r$.
The base case $r=R$ holds since $\widehat W_R=W_R$ and $\xi_R=0$.

Assume~\eqref{eq:xi_claim_v2} holds for $r+1$. Then
\[
\widehat W_r
=
|S\cap I_r|+2\widehat W_{r+1}
\le
|S\cap I_r|+2(1+\xi_{r+1})W_{r+1}.
\]
Using $2W_{r+1}=|R_r|+E_r$ where $|E_r|\le \eta_r W_r$ from~\eqref{eq:eta_r_def_v2}, and
$2W_{r+1}\le |R_r|+\eta_r W_r\le (1+\eta_r)W_r$, we get $2W_{r+1}\le (1+\eta_r)W_r$ and hence
$2\xi_{r+1}W_{r+1}\le (1+\eta_r)\xi_{r+1}W_r$.
Therefore
\[
\widehat W_r
\le
|S\cap I_r|+|R_r|+\eta_r W_r+(1+\eta_r)\xi_{r+1}W_r
=
W_r+\xi_r W_r
=
(1+\xi_r)W_r.
\]
A symmetric argument gives the lower bound:
\begin{align*}
\widehat W_r
&   =
|S\cap I_r|+2\widehat W_{r+1}
\ge
|S\cap I_r|+2(1-\xi_{r+1})W_{r+1}
=
|S\cap I_r|+|R_r|-E_r-2\xi_{r+1}W_{r+1}
\\
& \ge
W_r-\eta_r W_r-(1+\eta_r)\xi_{r+1}W_r
=
(1-\xi_r)W_r.
\end{align*}

This proves~\eqref{eq:xi_claim_v2} for all $r$, in particular
$\widehat W_0\in(1\pm \xi_0)W_0$.

Finally, since $1+\xi_r=(1+\eta_r)(1+\xi_{r+1})$, we have
\[
1+\xi_0=\prod_{r=0}^{R-1}(1+\eta_r)\ \le\ \exp\!\left(\sum_{r=0}^{R-1}\eta_r\right),
\]
hence $\xi_0\le \exp(\sum_r\eta_r)-1$.

\paragraph{Bounding $\sum_r\eta_r$.}
The number of rounds with $w_{\min}<W_r\le w_{\high}$ is $O(\log(2w_{\high}))$ since
$W_{r+1}\le (1+\eta_0)W_r/2\le (3/5)W_r$.
Thus $\sum_{w_{\min}<W_r\le w_{\high}}\eta_r\le O(\eta_0\log(2w_{\high}))\le \varepsilon/50$.

For rounds with $W_r>w_{\high}$, we have $\eta_r\le \eta(w_r)$ with $w_r\asymp W_r$ dyadic and
$W_{r+1}\le (1+\eta_r)W_r/2\le (5/8)W_r$. A geometric-series estimate gives
\[
\sum_{W_r>w_{\high}}\eta_r
\ \le\
O\!\left(\sqrt{\frac{k\log(e w_{\high}/k)+\log R}{w_{\high}}}\right)
\ \le\ \varepsilon/50
\]
by the definition of $w_{\high}$ (choosing $C$ large enough).
Therefore $\sum_r\eta_r\le \varepsilon/25$, and for $\varepsilon\le 1/4$ we get
$\xi_0\le e^{\varepsilon/25}-1\le \varepsilon$ (after increasing constants).
Hence $\widehat{\mathrm{wt}}_{T,\alpha}(x)=\widehat W_0\in (1\pm\varepsilon)W_0$ for all $x\in\calC$.}

\paragraph{Size bound.}
We have $|T|\le \sum_{r=0}^{R-1}|I_r|+|U_R|$ and $|U_R|\le 50$ on the good event.
Also $|I_{r,\mathrm{low}}|\le k w_{\low}$.

For each dyadic $w\in[w_{\low},w_{\high}]$, apply
\Cref{thm:universe_reduction} to the layer family
$\calF^{(r)}_{(w,2w]}$ with $\lambda = \theta_0 \eta_0^2$ to obtain
\[
|I_{r,w}|
\ \le\
\frac{k}{\eta_0^2}\cdot \poly\!\left(\log(kw/\eta_0)\right).
\]
Since $w\le w_{\high}$ throughout, we have $\log w\le \log w_{\high}$ and
\[
\log(k/\eta_0)=\log(k/\varepsilon)+O(\log\log w_{\high}),
\]
hence
\[
|I_{r,w}|
\ \le\
\frac{k}{\eta_0^2}\cdot \poly\!\left(\log(k/\varepsilon),\log\log n\right),
\]
using $\log w_{\high} = O(\log(k/\varepsilon)+\log\log n)$ by the definition of $w_{\high}$.

There are $O(\log(2w_{\high}))$ such dyadic values. Thus
\[
|I_r|
\ \le\
\frac{k}{\eta_0^2}\cdot \poly\!\left(\log(k/\varepsilon),\log\log n\right),
\]
since $\eta_0^{-2}=\Theta(\log^2(2w_{\high})/\varepsilon^2)$ and
$w_{\low}=\Theta(\eta_0^{-2}(\log(k/\varepsilon)+\log R))$.
Finally $R=\Theta(\log n)$ gives
\[
|T|
\ \le\
\frac{k\log n}{\varepsilon^2}\cdot \poly\!\left(\log(k/\varepsilon),\log\log n\right),
\]
as claimed.

This finishes the proof of the full sparsification theorem.
\end{proof}

\subsection{Lower bound}\label{sec:sparsification_lower_bound}
In this subsection, we state and prove our lower bound construction. 

\begin{lemma}[\Cref{main:sparsification_lower_bound}, restated]
Let $k \ge 1$ and $\varepsilon \in (0,1)$. Then, for all large enough $n$, there exists an explicit $\calC \subseteq \{0, 1\}^n$ with $\mathrm{NRD}(\calC)= k$ such that any $\varepsilon$-sparsifier $(T, \alpha)$ of $\calC$ must satisfy
    \[
    |T| =\Omega \left( \frac{k \log (n/k)}{\varepsilon} \right).
    \]
\end{lemma}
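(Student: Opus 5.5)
We would generalize a nested-chain construction. First handle $k=1$ with the code of all prefixes, then take $k$ disjoint copies.

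\textbf{Construction.} Let $m:=\lfloor n/k\rfloor$ and partition $[km]\subseteq[n]$ into $k$ consecutive blocks $B_1,\ldots,B_k$ of size $m$. Write $B_\ell=\{b_{\ell,1}<\cdots<b_{\ell,m}\}$ and set $P_{\ell,j}:=\{b_{\ell,1},\ldots,b_{\ell,j}\}$. Let $\calC$ consist of the indicator vectors of all $P_{\ell,j}$ for $\ell\in[k]$ and $j\in[m]$.

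\textbf{Step 1: $\NRD(\calC)=k$.} We use \Cref{lem:equivalence_between_NRD_and_WSF} and compute $\MF(\calF_{\calC})$.
\begin{itemize}
\item Lower bound: the sets $P_{1,1},\ldots,P_{k,1}$ are pairwise disjoint singletons, so they form a $k$-moonflower.
\item Upper bound: two sets from the same block are nested, $P_{\ell,j}\subseteq P_{\ell,j'}$ for $j\le j'$. The smaller one then has no element outside the other, so a moonflower contains at most one set per block. Hence $\MF(\calF_{\calC})\le k$.
\end{itemize}

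\textbf{Step 2: lower bound inside one block.} Let $(T,\alpha)$ be an $\varepsilon$-sparsifier and fix a block $\ell$. Write $T\cap B_\ell=\{b_{\ell,s_1},\ldots,b_{\ell,s_q}\}$ with $s_1<\cdots<s_q$.
\begin{itemize}
\item The codeword $P_{\ell,1}$ has weight $1$, so its estimate must be positive. This forces $s_1=1$.
\item The estimate $f(j):=\widehat{\mathrm{wt}}_{T,\alpha}(P_{\ell,j})$ is a nondecreasing step function of $j$. It is constant on $[s_i,s_{i+1})$, and also on $[s_q,m]$ if we set $s_{q+1}:=m+1$.
\item The sparsifier condition at $j=s_i$ gives $f\le(1+\varepsilon)s_i$ on this interval. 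At $j=s_{i+1}-1$ it gives $f\ge(1-\varepsilon)(s_{i+1}-1)$.
\item Combining, $s_{i+1}-1\le \frac{1+\varepsilon}{1-\varepsilon}s_i$, and so $s_{i+1}\le \frac{1+\varepsilon}{1-\varepsilon}s_i+1$.
\end{itemize}

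\textbf{Step 3: unrolling the recursion.} This step needs care because of the additive $+1$.
\begin{itemize}
\item While $s_i\lesssim 1/\varepsilon$, each step advances the index by only about $1+O(\varepsilon s_i)$. Reaching index $\Theta(1/\varepsilon)$ therefore already takes $\Omega(1/\varepsilon)$ elements. (For $\varepsilon$ bounded away from $0$, simply use $s_{i+1}\le 4s_i$ say.)
\item Once $s_i\ge 1/\varepsilon$, we have $s_{i+1}\le(1+O(\varepsilon))s_i$.
\item Since $s_{q+1}=m+1$, the number of elements in the block is
\[
q\;\ge\;\Omega\!\left(\frac{\log m}{\log(1+O(\varepsilon))}\right)\;=\;\Omega\!\left(\frac{\log(n/k)}{\varepsilon}\right),
\]
valid for $n$ large enough that $m\ge 2$.
\end{itemize}

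\textbf{Step 4: summing over blocks.} The blocks are disjoint and the argument applies to each one separately. Summing gives $|T|\ge\sum_\ell|T\cap B_\ell|=\Omega\!\left(\frac{k\log(n/k)}{\varepsilon}\right)$.

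The only step with any delicacy is the unrolling in Step 3, specifically the additive $+1$ in the recursion. Everything else follows directly from the step-function structure of prefix estimates and the nestedness within each block.
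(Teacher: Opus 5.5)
Your proposal is correct and is essentially the paper's proof: $k$ disjoint blocks each carrying a chain of prefixes (so $\NRD(\calC)=k$), with the per-block count coming from the estimate being constant between consecutive elements of $T$. The only difference is that the paper keeps just a geometric sub-chain whose sizes grow by a factor exceeding $\frac{1+\varepsilon}{1-\varepsilon}$ and uses pigeonhole to force $T$ to separate every consecutive pair, whereas you keep all prefixes and unroll the equivalent gap recurrence $s_{i+1}\le\frac{1+\varepsilon}{1-\varepsilon}s_i+1$.

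One small correction to Step 3. For $\varepsilon\le 1/2$ the unrolling gives $q=\Omega\bigl(\log(1+\varepsilon m)/\varepsilon\bigr)$, so you need something like $m\ge\varepsilon^{-2}$ rather than merely $m\ge 2$; this is harmless, since $n$ may be taken large. Your shortcut $s_{i+1}\le 4s_i$ likewise needs $\varepsilon\le 1/2$, not just $\varepsilon$ bounded away from $0$. As $\varepsilon\to 1$, both your count and the paper's degrade to about $\log m/\log\frac{1+\varepsilon}{1-\varepsilon}$.
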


\begin{proof}
We will assume $n$ is chosen large enough so that $\frac{k \log(n/k)}{\varepsilon} = O(n)$.
In the construction we identify $\{0,1\}^n$ with subsets of $[n]$.
Assume without loss of generality that $k$ divides $n$ and let $m=n/k$. Let $a_1 \le \cdots \le a_s$ be a maximal collection of integers such that $1 \le a_1$, $a_s < m$ and $a_{j+1} > (1+\varepsilon) a_j/(1-\epsilon)$ for all $j<s$. Clearly, such numbers exist for $s=\Omega(\log (n/k)/\varepsilon)$ (here is where we use the assumption that $n$ is large enough, as we clearly have $s \le n/k$). For two integers $i<j$ let $[i:j]=\{i,i+1,\ldots,j\}$. For $i \in [k], j \in [s]$ define the set
$$
S_{i,j} = [(i-1)m:(i-1)m+a_j].
$$
We take $\calC_i := \{S_{i,j}:  j \in [s]\}$ and $\calC := \cup_{i \in [k]} \calC_i$.

Observe that $\calC_i$ are defined on disjoint ground sets and that each $\calC_i$ is a chain. This implies that $\mathrm{NRD}(\calC_i)=1$ and hence \footnote{A size $k \times k$ permutation matrix is formed by taking the last coordinate of each $\calC_i$, i.e., $(i-1)m + a_s$'s as the diagonal.} $\mathrm{NRD}(\calC)=k$. Finally, let $(T,\alpha)$ be an $\varepsilon$-sparsifier for $\calC$. We claim that $|T| \ge |\calC|$ which concludes the proof.

Assume this is not the case. Then there must exist some $i \in [k]$ such that $|T \cap \Supp (\calC_i)| < |\calC| / k = s$. In particular, there must exist $i \in [k]$ and $j<s$ such that 
$$
T \cap S_{i,j} = T \cap S_{i,j+1}.
$$
This however cannot be the case as by construction $|S_{i,j+1}| > (1+\varepsilon) |S_{i,j}|$ and $(T,\alpha)$ is an $\varepsilon$-sparsifier for $\calC$.

\end{proof}

\ignore{
\begin{remark}[Where $\log\log n$ appears]
The $\log\log n$ dependence enters through $\log R$ in (i) the target failure probability
per round, and (ii) the choice of $w_{\high}$ and $w_{\min}$ to make union bounds hold uniformly
over all $R=\Theta(\log n)$ rounds when $k$ is small.
This does not affect the main point that the dependence on $n$ is a single $\log n$ factor.
\end{remark}}

%% file: appendix.tex
In this section, we prove \Cref{thm:frankl}, restated below for convenience.

\begin{theorem}[\Cref{thm:frankl} restated, \cite{Frankl1982ExtremalProblem}]\label{thn:frankl_restated}
    Let $\calA = \{A_1,\ldots,A_m\}$ be a family of $r$-sets and $\calB = \{B_1,\ldots,B_m\}$ be a family of $s$-sets such that (1) $A_i \cap B_i = \emptyset$ for $i = 1,2,\ldots,m$ and (2)  $A_i \cap B_j \neq \emptyset$ for $1 \leq i < j \leq m$. Then 
    \[
    m \leq \binom{r+s}{s}.
    \]
\end{theorem}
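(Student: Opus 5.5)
We will prove the skew Bollobás-type bound of Frankl with Lovász's exterior-algebra argument, in its general-position vector form. First we reduce to the uniform case. We pad each $A_i$ to exactly $r$ elements and each $B_i$ to exactly $s$ elements, using fresh elements that are new for every set, so no padding element appears in two different sets. Padding cannot create an intersection $A_i\cap B_i$, because the new elements are distinct. It also preserves every nonempty intersection $A_i\cap B_j$. So we may assume $|A_i|=r$ and $|B_i|=s$ for all $i$, over a finite ground set $X$.

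Next we take vectors $v_x\in\mathbb{R}^{r+s}$ for $x\in X$ in general position, meaning any $r+s$ of them are linearly independent. The moment curve $v_x=(1,t_x,t_x^2,\ldots,t_x^{r+s-1})$ with distinct reals $t_x$ works. For a set $A$ of size $r$ we form the wedge $a_i=\bigwedge_{x\in A_i}v_x\in\Lambda^r\mathbb{R}^{r+s}$, and similarly $b_j=\bigwedge_{y\in B_j}v_y\in\Lambda^s\mathbb{R}^{r+s}$. The product $a_i\wedge b_j\in\Lambda^{r+s}\mathbb{R}^{r+s}\cong\mathbb{R}$ is nonzero exactly when the $r+s$ vectors $\{v_x\}_{x\in A_i}\cup\{v_y\}_{y\in B_j}$ are linearly independent. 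By general position, this happens exactly when $A_i\cap B_j=\emptyset$. If the sets share an element, a vector repeats and the wedge vanishes.

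The key step is to show that $a_1,\ldots,a_m$ are linearly independent in $\Lambda^r\mathbb{R}^{r+s}$, which has dimension $\binom{r+s}{r}=\binom{r+s}{s}$. This immediately gives $m\le\binom{r+s}{s}$. Consider the $m\times m$ matrix $M_{ij}=a_i\wedge b_j$. Condition (1) makes the diagonal entries nonzero, and condition (2) makes $M_{ij}=0$ for $i<j$. So $M$ is triangular with nonzero diagonal and hence nonsingular.

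It remains to derive independence from nonsingularity. Suppose $\sum_i c_i a_i=0$. Wedging with each $b_j$ gives $\sum_i c_i M_{ij}=0$ for every $j$, that is, $c^\top M=0$, so $c=0$. The only delicate point is checking that the triangular direction matches the hypothesis. Independence only needs $M$ nonsingular, and either triangular orientation gives that, so this is not a real obstacle. The main care is in the general-position claim that wedges vanish iff the sets intersect. For that we will spell out that disjoint sets give $r+s$ distinct points on the moment curve, whose determinant is a nonzero Vandermonde determinant.
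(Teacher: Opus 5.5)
Your proof is correct, but it is the exterior-algebra argument (Lov\'asz's) rather than the route the paper takes.

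The paper works in $\Sym^r(V)$ with $\dim V=s+1$, and its vectors need only be in general position in the weaker sense that any $s+1$ are independent. It maps $A_i$ to the symmetric product $u_i$ of the $v_a$, $a\in A_i$. It tests against $F_j=f_j^{\otimes r}$, where $f_j$ is a functional on $V$ whose kernel is exactly $\mathrm{span}\{v_b: b\in B_j\}$. Then $F_j(u_i)=\prod_{a\in A_i}f_j(v_a)$ vanishes iff $A_i$ meets $B_j$, and the count is $\dim\Sym^r(V)=\binom{r+s}{r}$.

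You instead place both families in the exterior algebra of $\mathbb{R}^{r+s}$ and use the pairing $\Lambda^r\times\Lambda^s\to\Lambda^{r+s}\cong\mathbb{R}$. The test vectors $b_j$ are then objects of the same kind as the $a_i$, and the two hypotheses become the single triangular matrix $M_{ij}=a_i\wedge b_j$.

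The paper's version buys a smaller ambient space and a nonvanishing check made one element at a time: each $f_j(v_a)\neq 0$ because $B_j\cup\{a\}$ gives $s+1$ independent vectors. It is the polynomial method in disguise, with $u_i$ a product of $r$ linear forms in $s+1$ variables and $F_j$ evaluation at a point. It would even tolerate the $A_i$ being multisets.

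Yours buys symmetry in $r$ and $s$, and robustness. Since $a_i\wedge b_j=0$ exactly when the corresponding spans intersect nontrivially, the same argument proves the subspace version of the theorem, after a generic projection to $\mathbb{R}^{r+s}$. There the $A_i,B_j$ are replaced by $r$- and $s$-dimensional subspaces, which the product-of-linear-forms encoding does not handle directly.
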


The main idea behind the proof of \Cref{thm:frankl} is as follows: we associate to each set $A_i$ a vector $u_i$ in some vectors space $W$ and show that $u_1,\ldots,u_m$ are linearly independent. Given this, it suffices to choose $W$ such that $\dim W = \binom{r+s}{s}$. In \cite{Frankl1982ExtremalProblem}, Frankl chose $W = \Sym^r (V)$ for some vector space $V$ of dimension $s+1$ where $\Sym^r (V)$ denotes the $r$-th symmetric tensor power of $V$.
 
\begin{definition}[Symmetric tensor product] Let $v_1,\ldots,v_d \in V \subseteq \rr^n$. The $d$-th symmetric tensor product of $v_1,\ldots,v_d$ denoted $v_1 \vee v_2 \vee \cdots \vee v_d$ is defined as
\[
v_1 \vee v_2 \vee \cdots \vee v_d := \frac{1}{d!}\sum_{\sigma \in S_d}v_{\sigma (1)} \otimes v_{\sigma (2)} \otimes \cdots \otimes v_{\sigma (d)}.
\]
In other words, the symmetric tensor product is the average of tensor products over all permutations.    
\end{definition}

\begin{definition}[Symmetric tensor power]
    Let $V \subseteq \rr^n$. The $d$-th tensor power of $V$ denoted $\Sym^d (V)$ is defined as:
    \[
    \Sym^d (V):= span\{v_1 \vee v_2 \vee \cdots \vee v_d: v_i \in V, \ \forall i \in [d]\}.
    \]
\end{definition}

\begin{fact}[Dimension of symmetric tensor power]\label{fact:dim_sym_tensor_power}
    $\Sym^d (V)$ is a vector space and
    \[
    \dim (\Sym^d (V)) = \binom{\dim (V) + d - 1}{d}.
    \]
\end{fact}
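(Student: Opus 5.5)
The fact has two parts: $\Sym^d(V)$ is a vector space, and its dimension is $\binom{\dim V + d-1}{d}$. I would prove both by exhibiting an explicit basis.

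\textbf{Subspace.} $\Sym^d(V)$ is defined as a span inside $(\rr^n)^{\otimes d}$, so it is automatically a linear subspace. Only the dimension needs work.

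\textbf{Symmetrization operator.} Let $N=\dim V$ and fix a basis $e_1,\ldots,e_N$ of $V$. Consider the linear map $P:V^{\otimes d}\to V^{\otimes d}$ given by
\[
P(x_1\otimes\cdots\otimes x_d)=\frac{1}{d!}\sum_{\sigma\in S_d} x_{\sigma(1)}\otimes\cdots\otimes x_{\sigma(d)}.
\]
By definition, $v_1\vee\cdots\vee v_d=P(v_1\otimes\cdots\otimes v_d)$.

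\textbf{Spanning set.} The product $v_1\vee\cdots\vee v_d$ is multilinear in the $v_i$. Expanding each $v_i$ in the basis therefore writes it as a combination of the products $e_{j_1}\vee\cdots\vee e_{j_d}$. Each such product is unchanged when the indices are permuted, so we may assume $j_1\le\cdots\le j_d$. Hence $\Sym^d(V)$ is spanned by the elements $e_J:=e_{j_1}\vee\cdots\vee e_{j_d}$, indexed by multisets $J=\{j_1\le\cdots\le j_d\}$ of size $d$ from $[N]$.

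\textbf{Linear independence.} The pure tensors $e_{i_1}\otimes\cdots\otimes e_{i_d}$ form a basis of $V^{\otimes d}$. Expanding $e_J$ in this basis, it is a positive combination of exactly those pure tensors whose index sequence $(i_1,\ldots,i_d)$ is a rearrangement of $J$. Distinct multisets have disjoint sets of rearrangements, so the $e_J$ are supported on disjoint sets of basis vectors. Each $e_J$ is nonzero, since its coefficients are positive. Nonzero vectors with pairwise disjoint supports in a common basis are linearly independent.

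\textbf{Counting.} By stars and bars, the number of multisets of size $d$ from an $N$-element set is $\binom{N+d-1}{d}$. This gives the claimed dimension.

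The only step needing any care is the independence argument, which reduces to the disjoint-support observation above.
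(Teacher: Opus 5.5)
Your proof is correct and complete. The paper gives no proof of this fact and simply quotes it as standard, so there is no competing argument to compare against. Your route is the usual one: the spanning set $e_J$ indexed by size-$d$ multisets, independence via disjoint supports in the pure-tensor basis, and stars and bars for the count.

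One remark: in the appendix the fact is used only through the inequality $m \le \dim \Sym^r(V) \le \binom{s+r}{r}$. That direction already follows from your spanning step alone, so the independence argument is needed only for the equality as stated.
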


We say $T$ is a \emph{$w$-uniform set} if $|T| = w$. We are now ready to prove \Cref{thm:frankl}.
\begin{proof}(of \Cref{thn:frankl_restated}, \cite{Frankl1982ExtremalProblem})
    Adding distinct elements and enlarging the universe if necessary, we may assume $\calA$ is a and $\calB$ are families of $r$-uniform and $s$-uniform sets respectively. Let $X:= (\bigcup_i A_i) \cup (\bigcup_j B_j)$ denote the universe. Choose a vector space $V$ of dimension $s+1$. Assign to each $x \in X$ a vector $v_x \in V$ such that any $s+1$ of these vectors are linearly independent\footnote{For instance, we can choose distinct real numbers $t_x$ and set $v_x = (1, t_x,\ldots,t_x^{s}) \in \rr^{s+1}$. Then any $s+1$ vectors forms a Vandermonde matrix and are thus linearly independent.}. For $A_i = \{a_{i, 1},\ldots,a_{i, r}\}$, define $u_i$ to be the associated symmetric $r$-th tensor product:
    \[
    u_i := v_{a_{i, 1}} \vee v_{a_{i, 2}} \vee \cdots \vee v_{a_{i, r}}.
    \]
    Notice that for each $j$, the vectors $\{v_{b}: b \in B_j\}$ span an $s$-dimensionl subspace of $V$ as $|B_j| = s$. Since $\dim (V) = s + 1$, there exists a nonzero linear functional $f_j: V \to \rr$ such that $f_j (v_b) = 0$ for all $v_b \in B_j$. We can extend $f_j$ to obtain a functional $F_j :\Sym^r (V) \to \rr$ defined as
    \[
    F_j (v_{x_1} \vee v_{x_2} \vee \cdots \vee v_{x_r}) = f_j(v_{x_1})f_j(v_{x_2}) \cdots f_j(v_{x_r}).
    \]
    In particular, this implies $F_j(u_i) = \prod_{a \in A_i}f_j(v_a)$. Now, if $A_i \cap B_j \neq \emptyset$, then there exists some $a \in A_i \cap B_j$ which implies $f_j(v_a) = 0$ and hence $F_j (u_i) = 0$. On the other hand, if $A_i \cap B_j = \emptyset$ then for all $a \in A_i$, we have $|B_j \cup \{a\}| = s+1$. By the construction of $V$, we know $\{v_b: b \in B_j\} \cup \{a\}$ is linearly independent for all $a \in A_i$. This gives $f_j(v_a) \neq 0$ for all $a \in A_i$ since $f_j$ vanishes only on $\mathrm{span} (\{v_b: v \in B_j\})$. Hence, in this case we have $F_j(u_i) \neq 0$.

    Together with the assumptions on the set families $\calA$ and $\calB$, we thus know (1) $F_i (u_i) \neq 0$ for all $i = 1,2,\ldots,m$ and (2) $F_j (u_i) = 0$ for all $1 \leq i < j \leq m$. Using this, we can show $u_1,\ldots,u_m$ are linearly independent.

    To see this, suppose for contradiction $u_1,\ldots,u_m$ are linearly dependent. Then there exists $c_1,\ldots,c_m$ not all zeros such that
    \[
    c_1u_1 + \cdots + c_m u_m = 0.
    \]
    Let $k \in [m]$ denote the largest index such that $c_k \neq 0$. Then we know $c_1u_1 + \cdots + c_k u_k = 0$. Applying $F_k$ to both sides, we have by linearity that 
    \[
    c_1 F_k(u_1) + \cdots + c_k F_k(u_k) = 0.
    \]
    Since $F_k(u_i) = 0$ for all $i < k$, the above equality is equivalent to $c_k F_k(i_k) = 0$ which implies $c_k = 0$ as $F_k (u_k) \neq 0$. This is a contradiction.

    Finally, since $u_1,\ldots,u_m$ are linearly independent, we know
    \[
    m \leq \dim (\Sym^r (V)) \leq \binom{\dim (V) + r - 1}{r} = \binom{r+s}{s}
    \]
    as desired where the last equality follows from \Cref{fact:dim_sym_tensor_power}.
\end{proof}